\newtheorem{thm}{Theorem}[section]
\newtheorem{lemma}[thm]{Lemma}
\newtheorem{prop}[thm]{Proposition}
\newtheorem{cor}[thm]{Corollary}
\newtheorem{defin}[thm]{Definition}
\newtheorem{rem}[thm]{Remark}
\newtheorem{exam}[thm]{Example}
\newcommand{\R}{{\mathbb{R}}}
\newcommand{\D}{{\mathbb{D}}}
\newcommand{\Q}{{\mathbb{Q}}}
\newcommand{\T}{{\mathbb{T}}}
\newcommand{\Z}{{\mathbb{Z}}}
\newcommand{\N}{{\mathbb{N}}}
\newcommand{\C}{{\mathbb{C}}}
\newcommand{\SP}{{\mathbb{S}}}
\newcommand{\tP}{{\widetilde{\mathcal{P}}}}
\newcommand{\cA}{{\mathcal{A}}}
\newcommand{\cB}{{\mathcal{A}}}
\newcommand{\cF}{{\mathcal{F}}}
\newcommand{\cK}{{\mathcal{K}}}
\newcommand{\cL}{{\mathcal{L}}}
\newcommand{\tcL}{{\widetilde{\mathcal{L}}}}
\newcommand{\cP}{{\mathcal{P}}}
\newcommand{\cS}{{\mathcal{S}}}
\newcommand{\Fix}{{\hbox{\rm Fix\,}}}
\newcommand{\Maslov}{{\hbox{\it Maslov\,}}}
\newcommand{\Ham}{{\it Ham}}
\newcommand{\id}{{\text{{\bf 1}}}}
\newcommand{\tHam}{\widetilde{\hbox{\it Ham}\, }}
\newcommand{\Symp}{{\hbox{\it Symp} }}
\newcommand{\Qed}{\hfill \qedsymbol \medskip}
\begin{document}

\title{Rigid subsets of symplectic manifolds\\
}

\renewcommand{\thefootnote}{\alph{footnote}}

\author{\textsc Michael Entov$^{a}$\ and Leonid
Polterovich$^{b}$ }

\footnotetext[1]{Partially supported by E. and J. Bishop Research
Fund and by the Israel Science Foundation grant $\#$ 881/06.}
\footnotetext[2]{Partially supported by the Israel Science
Foundation grant $\#$ 11/03.}

\date{\today}

\maketitle

\bigskip

\begin{abstract}
\noindent

We show that there is an hierarchy of intersection rigidity
properties of sets in a closed symplectic manifold: some sets
cannot be displaced by symplectomorphisms from more sets than the
others. We also find new examples of rigidity of intersections
involving, in particular, specific fibers of moment maps of
Hamiltonian torus actions, monotone Lagrangian submanifolds
(following the works of P.Albers and P.Biran-O.Cornea), as well as
certain, possibly singular, sets defined in terms of
Poisson-commutative subalgebras of smooth functions.  In addition,
we get some geometric obstructions to semi-simpli\-ci\-ty of the
quantum homology of symplectic manifolds. The proofs are based on
the Floer-theoretical machinery of partial symplectic
quasi-states.
\end{abstract}

\vfil \eject

\tableofcontents

\renewcommand{\thefootnote}{\arabic{footnote}}
\vfil \eject

\section{Introduction and main results}
\label{sec-intro}

\subsection{Many facets of displaceability}
\label{subsec-intro-many-facets-of-displaceability}

A well-studied and easy to visualize rigidity property of subsets of
a symplectic manifold $(M,\omega)$ is the rigidity of intersections:
a subset $X\subset M$ cannot be displaced from the closure of a
subset $Y\subset M$ by a compactly supported Hamiltonian isotopy:
\[
\phi (X)\cap \overline{Y}\neq \emptyset \ \ \forall \phi\in \Ham
(M)\;.
\]
We say in such a case that $X$ {\it cannot be displaced} from $Y$.
If $X$ cannot be displaced from itself we call it {\it
non-displaceable}. These properties become especially interesting
and purely symplectic when $X$ can be displaced from itself or from
$Y$ by a (compactly supported) smooth isotopy.

One of the main themes of the present paper is that {\it  ``some
non-dis\-pla\-ce\-able sets are more rigid than others."} To
explain this, we need the following ramifications of the notion of
a non-displaceable set:

\medskip
\noindent {\sc Strong non-displaceability:} A subset $X\subset M$
is called {\it strongly non-displaceable} if one cannot displace
it by any (not necessarily Hamiltonian) symplectomorphism of
$(M,\omega)$.

\medskip
\noindent {\sc Stable non-displaceability:}  Consider $T^* \SP^1 =
\R\times \SP^1$ with the coordinates $(r, \theta)$ and the
symplectic form $dr \wedge d\theta$. We say that $X\subset M$ is
{\it stably non-displaceable} if $X \times \{ r = 0\}$ is
non-displaceable in $M \times T^* \SP^1$ equipped with the split
symplectic form $\bar{\omega}=\omega \oplus (dr \wedge d\theta)$.
Let us mention that detecting stably non-displaceable subsets is
useful for studying geometry and dynamics of Hamiltonian flows
(see for instance \cite{P-IMRN} for their role in Hofer's geometry
and \cite{P-Rudnick} for their appearance in the context of kick
stability in Hamiltonian dynamics).

\medskip
\noindent  Formally speaking, the properties of strong and stable
non-displaceability are mutually independent  and both are
strictly stronger than displaceability.

\medskip
\noindent In the present paper we refine the machinery of partial
symplectic quasi-states introduced in \cite{EP-qst} and get new
examples of stably non-displaceable sets, including certain fibers
of moment maps of Hamiltonian torus actions as well as monotone
Lagrangian submanifolds discussed by Albers \cite{Albers} and
Biran-Cornea \cite{BC}. Further, we address the following
question: given the class of stably non-displaceable sets, can one
distinguish those of them which are also strongly non-displaceable
by means of the Floer theory? Or, other way around, what are the
Floer-homological features of stably non-displaceable but strongly
displaceable sets?  Toy examples are given by the equator of the
symplectic two-sphere and by the meridian on a symplectic
two-torus. Both are stably non-displaceable since their Lagrangian
Floer homologies are non-trivial. On the other hand, the equator
is strongly non-displaceable, while the meridian is strongly
displaceable by a non-Hamiltonian shift. Later on we shall explain
the difference between these two examples from the viewpoint of
Hamiltonian Floer homology and present various generalizations.

\medskip
\noindent The question on Floer-homological characterization of
(strongly) non-dis\-pla\-ce\-able but stably displaceable sets is
totally open, see Section~\ref{subsubsec-pack} below for an
example involving Gromov's packing theorem and discussion.

\medskip
\noindent Leaving Floer-theoretical considerations for the next
section, let us outline (in parts, informally) the general scheme
of our results: Given a symplectic manifold $(M,\omega)$, we shall
define (in the language of the Floer theory) two collections of
closed subsets of $M$, {\it heavy subsets} and {\it superheavy
subsets}. Every superheavy subset is heavy, but, in general, not
vice versa. Formally speaking, the hierarchy heavy-superheavy
depends in a delicate way on the choice of an idempotent in the
quantum homology ring of $M$. This and other nuances will be
ignored in this outline. The key properties of these collections
are as follows (see Theorems~\ref{thm-intro-heavy} and
\ref{thm-products} below):

\medskip
\noindent {\bf \underline{Invariance}:} Both collections are
invariant under the group of all symplectomorphisms of $M$.

\medskip
\noindent{\bf \underline{Stable non-displaceability}:} Every heavy
subset is stably non-displace\-able.

\medskip
\noindent {\bf \underline{Intersections}:} Every superheavy subset
intersects every heavy subset. In particular, superheavy subsets are
strongly non-displaceable. In contrast to this, heavy subsets can be
mutually disjoint and strongly displaceable.

\medskip
\noindent {\bf \underline{Products}:} Product of any two
(super)heavy subsets is (super)heavy.

\medskip
\noindent{\bf \underline{What is inside the collections?}} The
collections of heavy and superheavy sets include the following
examples:

\medskip
\noindent {\sc Stable stems}:   Let ${\mathbb A} \subset
C^{\infty}(M)$ be a finite-dimensional Poisson-com\-mu\-ta\-tive
subspace (i.e. any two functions from ${\mathbb A}$ commute with
respect to the Poisson brackets). Let $\Phi: M \to {\mathbb A}^*$
be the moment map: $\langle\Phi(x),F\rangle = F(x)$. A non-empty
fiber $\Phi^{-1}(p)$, $p \in {\mathbb A}^*$, is called a {\it
stem} of ${\mathbb A}$ (see \cite{EP-qst}) if all non-empty fibers
$\Phi^{-1}(q)$ with $q \neq p$ are displaceable and a {\it stable
stem} if they are stably displaceable. If a subset of $M$ is a
(stable) stem of a finite-dimensional Poisson-commutative subspace
of $C^\infty (M)$, it will be called just {\it a (stable) stem}.
Clearly, any stem is a stable stem. {\bf The collection of
superheavy subsets includes all stable stems} (see
Theorem~\ref{thm-stem} below). One readily shows that a direct
product of stable stems is a stable stem and that the image of a
stable stem under {\it any} symplectomorphism is again a stable
stem.

The following example of a  stable stem is borrowed (with a minor
modification) from \cite{EP-qst}: Let $X \subset M$ be a closed
subset whose complement is a finite disjoint union of stably
displaceable sets. Then $X$ is a stable stem. For instance, the
codimension-1 skeleton of a sufficiently fine triangulation of any
closed symplectic manifold is a stable stem. Another example is
given by the equator of $\SP^2$: it divides the sphere into two
displaceable open discs and hence is a stable stem. By taking
products, one can get more sophisticated examples of stable stems.
Already the product of equators of the two-spheres gives rise to a
Lagrangian Clifford torus in $\SP^2 \times \ldots \times \SP^2$.
To prove its rigidity properties (such as stable
non-displaceability) one has to use non-trivial symplectic tools
such as Lagrangian Floer homology, see e.g. \cite{Oh-involution}.
Products of the 1-skeletons of fine triangulations of the
two-spheres can be considered as {\it singular Lagrangian
submanifolds}, an object which is currently out of reach of the
Lagrangian Floer theory.

Another example of stable stems comes from Hamiltonian torus
actions. Consider an effective Hamiltonian action $\varphi:
\T^k\to \Ham  (M)$ with the moment map $\Phi = (\Phi_1,\ldots,
\Phi_k): M\to\R^k$. Assume that $\Phi_i$ is a normalized
Hamiltonian, that is $\int_M \Phi_i=0 $ for all $i=1,\ldots, k$. A
torus action is called {\it compressible} if the image of the
homomorphism $\varphi_{\sharp}: \pi_1 (\T^k)\to \pi_1 (\Ham(M))$,
induced by the action $\varphi$, is a finite group. One can show
that for compressible actions the fiber $\Phi^{-1}(0)$ is a stable
stem (see Theorem~\ref{thm-stab-disp} below).

\medskip
\noindent {\sc Special fibers of Hamiltonian torus actions}:
Consider  an effective  Hamiltonian torus action $\varphi$ on a
spherically monotone symplectic manifold. Let $I:\pi_1(\Ham(M))\to
\R$ be the mixed action-Maslov homomorphism introduced in
\cite{Pol-mixed}. Since the target space $\R^k$ of the moment map
$\Phi$ is naturally identified with $\text{Hom}(\pi_1(\T^k),\R)$,
the pull back $p_{spec}:=-\varphi_{\sharp}^*I$ of the mixed
action-Maslov homomorphism with the reversed sign can be
considered as a point of $\R^k$.  The preimage
$\Phi^{-1}({p}_{spec})$ is called {\it the special fiber} of the
action. We shall see below that the special fiber is always
non-empty. For monotone symplectic toric manifolds (that is when
$2k =\dim M$) the special fiber is a monotone Lagrangian torus.
Note that when the action is compressible we have $p_{spec}=0$ and
therefore the special fiber is a stable stem according to the
previous example. It is unknown whether the latter property
persists for general non-compressible actions. Thus in what
follows we treat stable stems and special fibers as separate
examples. {\bf The collection of superheavy subsets includes all
special fibers} (see Theorem~\ref{thm-main-nondispl-toric} below).

For instance, consider $\C P^2$ and the Lagrangian Clifford torus
in it (i.e. the torus $\{ [z_0 : z_1 : z_2]\in \C P^2\ |\ |z_0| =
|z_1| = |z_2| \}$). Take the standard Hamiltonian $\T^2$-action on
$\C P^2$ preserving the Clifford torus. It has three global fixed
points away from the Clifford torus. Make an equivariant
symplectic blow-up, $M$,  of $\C P^2$ at $k$ of these fixed
points, $0\leq k\leq 3$, so that the obtained symplectic manifold
is spherically monotone. The torus action lifts to a Hamiltonian
action on $M$.  One can show that its special fiber is the proper
transform of the Clifford torus.

\medskip
\noindent {\sc Monotone Lagrangian submanifolds:} Let
$(M^{2n},\omega)$ be a spherically monotone symplectic manifold,
and let $L \subset M$ be a closed monotone Lagrangian submanifold
with the minimal Maslov number  $N_L \geq 2$. We say that $L$ {\it
satisfies the Albers condition} \cite{Albers} if the image of the
natural morphism $H_* (L; \Z_2) \to H_* (M; \Z_2)$ contains a
non-zero element $S$ with $$\deg S
> \dim L +1 -N_L\;.$$

\noindent {\bf The collection of heavy sets includes all closed
monotone Lagran\-gi\-an submanifolds satisfying the Albers
condition} (see Theorem~\ref{thm-Albers-elem-implies-heavy}
below).

Specific examples include the meridian on $\T^2$, $\R P^n \subset
\C P^n$ and all Lagrangian spheres in complex projective
hypersurfaces of degree $d$ in $\C P^{n+1}$ with $n> 2d-3$. In the
case when the fundamental class $[L]$ of $L$ divides a non-trivial
idempotent in the quantum homology algebra of $M$, $L$ is, in
fact, superheavy (see Theorem~\ref{thm-monot-superh} below). For
instance, this is the case for $\R P^n \subset \C P^n$.
Furthermore, a version of superheaviness holds for any Lagrangian
sphere in the complex quadric of even (complex) dimension.

However, there exist examples of heavy, but not superheavy,
Lagrangian submanifolds: For instance, the meridian of the 2-torus
is strongly displaceable by a (non-Hamiltonian!) shift and hence
is not superheavy. Another example of heavy but not superheavy
Lagrangian submanifold is the sphere arising as the real part of
the Fermat hypersurface $$M = \{-z_0^d+z_1^d+\ldots+z_{n+1}^d =
0\} \subset \C P^{n+1}\;$$ with even $d \geq 4$ and $n > 2d-3$. We
refer to Section~\ref{subsec-mon-lagr} for more details on
(super)heavy monotone Lagrangian submanifolds.

\medskip
\noindent {\bf \underline{Motivation}:} Our motivation for the
selection of examples appearing in the list above is as follows.
Stable stems provide a playground for studying symplectic rigidity
of singular subsets. In particular, no visible analogue of the
conventional Lagrangian Floer homology technique is applicable to
them.

Detecting (stable) non-displaceability of Lagrangian submanifolds
via Lagrangian Floer homology is one of the central themes of
symplectic topology. In contrast to this, detecting {\it strong}
non-displaceabilty has at the moment the status of art rather than
science. That's why we were intrigued by Albers' observation that
monotone Lagrangian submanifolds satisfying his condition are in
some situations strongly non-displaceable. In the present work we
tried to digest Albers' results \cite{Albers} and look at them
from the viewpoint of theory of partial symplectic quasi-states
developed in \cite{EP-qst}. In addition, our result on
superheaviness of the Lagrangian anti-diagonal in $\SP^2 \times
\SP^2$ allows us to detect an  ``exotic" monotone Lagrangian torus
in this symplectic manifold: this torus does not intersect the
anti-diagonal, and hence is not heavy in contrast to the standard
Clifford torus, see Example~\ref{exam-exotic-torus} below.

In \cite{EP-qst} we proved a theorem which roughly speaking states
that every (singular) coisotropic foliation has at least one
non-displaceable fiber. However, our proof is non-constructive and
does not tell us which specific fibers are non-displaceable. The
notion of the special fiber arose as an attempt to solve this
problem for Hamiltonian circle actions.

\medskip
\noindent  Let us mention also that the {\bf product property}
enables us to produce even more examples of (super)heavy subsets by
taking products of the subsets appearing in the list.

\medskip
\noindent A few comments on the methods involved into our study of
heavy and superheavy subsets are in order. These collections are
defined in terms of partial symplectic quasi-states which were
introduced in \cite{EP-qst}. These are certain real-valued
functionals on $C^\infty (M)$ with rich algebraic properties which
are constructed by means of the Hamiltonian Floer theory and which
conveniently encode a part of the information contained in this
theory. In general, the definition of a partial symplectic
quasi-state involves the choice of an {\it idempotent element} in
the commutative part $QH_{\bullet} (M)$ of the quantum homology
algebra of $M$. Though the default choice is just the unity of the
algebra, there exist some other meaningful choices, in particular
in the case when $QH_\bullet (M)$ is semi-simple. This gives rise
to another theme discussed in this paper:  ``visible" topological
obstructions to semi-simplicity (see
Corollary~\ref{cor-semisimple-Torelli}
 and Theorem \ref{thm-lagr-semismp-a}
below). For instance, we shall show that if a monotone symplectic
manifold $M$ contains  ``too many" disjoint monotone Lagrangian
spheres whose minimal Maslov numbers exceed $n+1$, the quantum
homology $QH_\bullet (M)$ cannot be semi-simple.

Let us pass to the precise set-up. For  the reader's convenience,
the material presented in this brief outline will be repeated in
parts in the next sections in a less compressed form.


\subsection{Preliminaries on quantum homology}
\label{subsec-prelim-qhom-1}

\medskip
\noindent {\sc The Novikov Ring:} Let $\cF$ denote a base field
which in our case will be either $\C$ or $\Z_2$, and let $\Gamma
\subset \R$ be a countable subgroup (with respect to the addition).
Let $s,q$ be formal variables. Define a field $\cK_{\Gamma}$ whose
elements are generalized Laurent series in $s$ of the following
form:
$$\cK_{\Gamma} := \bigg\{\ \sum_{\theta \in \Gamma} z_\theta s^\theta, \ z_\theta \in
\cF,\ \sharp\big\{ \theta > c\ |\ z_\theta\neq 0\big\} <\infty,\
\forall c\in\R\ \bigg\}\;.$$ Define a ring $\Lambda_{\Gamma} : =
\cK_{\Gamma} [q, q^{-1}]$ as the ring of polynomials in $q,
q^{-1}$ with coefficients in $\cK_{\Gamma}$.  We turn
$\Lambda_{\Gamma}$ into a graded ring by setting the degree of $s$
to be $0$ and the degree of $q$ to be $2$.

\medskip
 The ring $\Lambda_{\Gamma}$ serves as an abstract
model of the Novikov ring associated to a symplectic manifold. Let
$(M,\omega)$ be a closed connected symplectic manifold. Denote by
$H_2^S (M)$ the subgroup of spherical homology classes in the
integral homology group $H_2 (M; \Z)$. Abusing the notation we
will write $\omega (A)$, $c_1 (A)$ for the results of evaluation
of the cohomology classes $[\omega]$ and $c_1 (M)$ on $A\in H_2
(M; \Z)$. Set
$$\bar{\pi}_2 (M) := H_2^S (M) / \sim,$$
where by definition
$$A \sim B\  {\rm iff}\  \omega (A) = \omega (B) \ {\rm and}\ c_1 (A) = c_1 (B).$$
 Denote by $\Gamma(M,\omega) := [\omega] (H_2^S
(M))\subset \R$ the subgroup of periods of the symplectic form on
$M$ on spherical homology classes. By definition, the Novikov ring
of a symplectic manifold $(M,\omega)$ is
$\Lambda_{\Gamma(M,\omega)}$. In what follows, when $(M,\omega)$
is fixed, we abbreviate and write  $\Gamma$, $\cK$ and $\Lambda$
instead of $\Gamma(M,\omega)$, $\cK_{\Gamma(M,\omega)}$ and
$\Lambda_{\Gamma(M,\omega)}$ respectively.

\medskip
\noindent {\sc Quantum homology:} Set $2n = {\rm dim}\, M$. The
quantum homology $QH_* (M)$ is defined as follows. First, it is a
graded module over $\Lambda$ given by
$$QH_* (M) := H_* (M; \cF)\otimes_{\cF}  \Lambda,$$
with the grading defined by the gradings on $H_* (M; \cF)$ and
$\Lambda$:
$$ {\rm deg}\, (a\otimes zs^\theta q^k) := {\rm deg}\, (a) + 2k\;.$$

Second, and most important, $QH_* (M)$ is equipped with a {\it
quantum product}: if $a\in H_k (M; \cF)$, $b\in H_l (M; \cF)$, their
quantum product is a class $a * b\in QH_{k+l - 2n} (M)$, defined by
$$ a*b = \sum_{A\in \bar{\pi}_2 (M)} (a*b)_A \otimes s^{-\omega
(A)} q^{-c_1 (A)},$$ where $(a*b)_A \in H_{k+l - 2n + 2c_1 (A)}
(M)$ is defined by the requirement
$$ (a*b)_A \circ c = GW_A^{\cF} (a,b,c) \ \forall c\in H_* (M; \cF).$$
Here  $\circ$ stands for the intersection index and $GW_A^{\cF}
(a,b,c)\in \cF$ denotes the Gromov-Witten invariant which, roughly
speaking, counts the number of pseudo-holomorphic spheres in $M$
in the class $A$ that meet cycles representing $a,b,c\in H_*
(M;\cF)$ (see \cite{Ru-Ti}, \cite{Ru-Ti-1}, \cite{MS2} for the
precise definition).

Extending this definition by $\Lambda$-linearity to the whole $QH_*
(M)$ one gets a correctly defined graded-commutative associative
product operation $*$ on $QH_\ast (M)$ which is a deformation of the
classical $\cap$-product in singular homology \cite{Liu},
\cite{MS2}, \cite{Ru-Ti}, \cite{Ru-Ti-1}, \cite{Wi}. The {\it
quantum homology algebra} $QH_* (M)$ is a ring whose unity  is the
fundamental class $[M]$ and which is a module of finite rank over
$\Lambda$. If $a, b\in QH_\ast (M)$ have graded degrees $deg\, (a)$,
$deg\, (b)$ then
\begin{equation}
\label{eqn-quantum-product-grading} deg\, (a\ast b) = deg\, (a) +
deg\, (b) - 2n.
\end{equation}

We will be mostly interested in the commutative part of the
quantum homology ring (which in the case $\cF=\Z_2$ is, of course,
the whole quantum homology ring). For this purpose we introduce
the following notation:

\medskip
\noindent {\bf We denote by $QH_\bullet (M)$ the whole quantum
homology $QH_* (M)$ if $\cF=\Z_2$ and the even-degree part of
$QH_* (M)$ if $\cF=\C$.

\smallskip
\noindent In general, given a topological space $X$, we denote by
$H_\bullet (X; \cF)$ the whole singular homology group $H_* (X;
\cF)$ if $\cF=\Z_2$ and the even-degree part of $H_* (X; \cF)$ if
$\cF=\C$.}

\medskip
\noindent Thus, in our notation the ring $QH_\bullet (M) =
H_\bullet (M; \cF)\otimes_{\cF} \Lambda$ is always a commutative
subring with unity of $QH_* (M)$ and a module of finite rank over
$\Lambda$. We will identify $\Lambda$ with a subring of
$QH_\bullet (M)$ by $\lambda \mapsto [M]\otimes \lambda$.

\subsection{An hierarchy of rigid subsets within Floer theory}
\label{subseq-intro-2a}

Fix a non-zero idempotent $a\in QH_{2n} (M)$ (by obvious grading
considerations the degree of every idempotent equals $2n$). We
shall deal with spectral invariants $c(a, H)$, where $H = H_t:
M\to \R$, $t\in \R$, is a smooth time-dependent and 1-periodic in
time Hamiltonian function on $M$, or $c (a, \phi_H)$, where
$\phi_H$ is an element of the universal cover $\tHam (M)$ of $\Ham
(M)$ represented by an identity-based path given by the time-1
Hamiltonian flow generated by $H$. If $H$ is {\it normalized},
meaning that $\int_M H_t \omega^{{\rm dim}\, M/2} = 0$ for all
$t$, then $c (a, H) = c (a, \phi_H)$. These invariants, which
nowadays are standard objects of the Floer theory, were introduced
in \cite{Oh-spectral} (cf. \cite{Schwarz} in the aspherical case;
also see \cite{Oh1},\cite{Oh2} for an earlier version of the
construction and \cite{EP-qmm} for a summary of definitions and
results in the monotone case).

\medskip
\noindent {\sc Disclaimer:} Throughout the paper we tacitly assume
that $(M,\omega)$ (as well as $(M \times \T^2, \bar{\omega})$,
when we speak of stable displaceability) belongs to the class
$\cS$ of closed symplectic manifolds for which the spectral
invariants are well defined and enjoy the standard list of
properties (see e.g. \cite[Theorem 12.4.4]{MS2}). For instance,
$\cS$ contains all symplectically aspherical and spherically
monotone manifolds. Furthermore, $\cS$ contains all symplectic
manifolds $M^{2n}$ for which, on one hand, either $c_1=0$ or the
minimal Chern number (on $H_2^S (M)$) is at least $n-1$ and, on
the other hand, $[\omega] (H_2^S (M))$ is a discrete subgroup of
$\R$ (cf. \cite{Usher}). The general belief is that the class
$\cS$ includes {\bf all} symplectic manifolds.

\medskip
\noindent Define a functional $\zeta: C^{\infty}(M) \to \R$ by
\begin{equation}
\label{eqn-zeta-part-qstate} \zeta (H): = \lim_{l \to +\infty}\;
\frac{c (a,lH)}{l}
\end{equation}
It is shown in \cite{EP-qst} that the functional $\zeta$ has some
very special algebraic properties (see
Theorem~\ref{thm-partial-qstate-partial-qmm-basic}) which form the
axioms of {\it a partial symplectic quasi-state} introduced in
\cite{EP-qst}. The next definition is motivated in part by the
work of Albers \cite{Albers}.

\begin{defin}\label{def-heavy}{\rm
A closed subset $X \subset M$ is called {\it heavy} (with respect
to $\zeta$ or with respect to $a$ used to define $\zeta$) if
\begin{equation} \label{eq-heavy-0} \zeta(H) \geq \inf_X H \;\;
\forall H \in C^{\infty}(M)\;,
\end{equation}
and is called {\it superheavy} (with respect to $\zeta$ or $a$) if
\begin{equation} \label{eq-superheavy-0} \zeta(H) \leq \sup_X H \;\;
\forall H \in C^{\infty}(M)\;.
\end{equation}}
\end{defin}

\medskip
\noindent The default choice of an idempotent $a$ is the unity
$[M] \in QH_* (M)$. In this case, as we shall see below, the
collections of heavy and superheavy sets satisfy the properties
listed in
Section~\ref{subsec-intro-many-facets-of-displaceability} and
include the examples therein. In view of potential applications
(including geometric obstructions to semi-simplicity of the
quantum homology), we shall work, whenever possible, with general
idempotents.

The asymmetry between $\sup_X H$ and $\inf_X H$ is related to the
fact that the spectral numbers satisfy a triangle inequality $c
(a*b, \phi_F \phi_G) \leq c(a,\phi_F) + c(b,\phi_G)$, while there
may not be a suitable inequality  ``in the opposite direction". In
the case when such an  ``opposite" inequality exists (e.g. when
$a=b$ is an idempotent and $\zeta$ defined by it is a genuine {\it
symplectic quasi-state} -- see
Section~\ref{subsec-effect-semisimp} below) the symmetry between
$\sup_X H$ and $\inf_X H$ gets restored and the classes of heavy
and superheavy sets coincide.

 Let us emphasize
that the notion of (super)heaviness depends on the choice of a
coefficient ring for the Floer theory.   In this paper the
coefficients for the Floer theory will be either $\Z_2$ or $\C$
depending on the situation. Unless otherwise stated, our results
on (super)heavy subsets are valid for any choice  the
coefficients.

The group $\Symp\, (M)$ of all  symplectomorphisms of $M$ acts
naturally on $H_* (M;\cF)$ and hence on $QH_* (M) = H_* (M;
{\mathcal F})\otimes_{{\mathcal F}} \Lambda$. Clearly, the
identity component $\Symp_0 (M)$ of $\Symp\, (M)$ acts trivially
on $QH_* (M)$ and hence for any idempotent $a\in QH_* (M)$ the
corresponding $\zeta$ is $\Symp_0 (M)$-invariant. Thus the image
of a (super)heavy set under an element of $\Symp_0 (M)$ is again a
(super)heavy set with respect to the same idempotent $a$.  If $a$
is invariant under the action of the whole $\Symp\, (M)$ (for
instance, if $a = [M]$) the classes of heavy and superheavy sets
with respect to $a$ are invariant under the action of the whole
$\Symp\, (M)$ in agreement with the {\bf invariance} property
presented in
Section~\ref{subsec-intro-many-facets-of-displaceability} above.

Let us mention also that the collections of (super)heavy sets
enjoy a stability property under inclusions: If $X,Y$, $X \subset
Y$, are closed subsets of $M$ and $X$ is heavy (respectively,
superheavy) with respect to an idempotent $a$ then $Y$ is also
heavy (respectively, superheavy) with respect to the same $a$.

\medskip
\noindent We are ready now to formulate the main results of the
present section.

\begin{thm}\label{thm-intro-heavy}
Assume $a$ and $\zeta$ are fixed. Then
\begin{itemize}
\item[{(i)}] Every superheavy set is heavy, but, in general, not
vice versa. \item[{(ii)}] Every heavy subset is stably
non-displaceable. \item[{(iii)}] Every superheavy set intersects
every heavy set. In particular, a superheavy set cannot be
displaced by a {\bf symplectic} (not necessarily Hamiltonian)
isotopy and if the idempotent $a$ is invariant under the
symplectomorphism group of $(M,\omega)$ (e.g. if $a = [M]$), every
superheavy set is strongly non-displaceable.
\end{itemize}
\end{thm}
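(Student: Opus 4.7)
The plan is to prove (i) and (iii) by short axiomatic manipulations with $\zeta$ and to reduce (ii) to a single cornerstone vanishing property of partial symplectic quasi-states.

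I would begin with \emph{(iii)}, which is the cleanest. Let $X$ be superheavy and $Y$ heavy. If $X\cap Y=\emptyset$, a smooth Urysohn function yields $H\in C^{\infty}(M)$ with $H\equiv 0$ on $X$ and $H\equiv 1$ on $Y$. Superheaviness gives $\zeta(H)\leq \sup_X H=0$, while heaviness gives $\zeta(H)\geq \inf_Y H=1$, a contradiction. For the displacement consequence, I use that $\zeta$ is $\Symp_0(M)$-invariant (as already observed in the excerpt): for any symplectic isotopy with time-1 map $\phi$, the set $\phi(X)$ is superheavy with respect to the same $a$, hence heavy, so the intersection statement forces $\phi(X)\cap X\neq \emptyset$. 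When $a$ is additionally $\Symp(M)$-invariant, the same reasoning applies to every symplectomorphism, yielding strong non-displaceability.

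Next I would do \emph{(i)}. The forward implication rests on the inequality $\zeta(H)+\zeta(-H)\geq 0$ for every autonomous $H$. To see it, apply the triangle inequality for spectral invariants to the idempotent relation $a*a=a$ and to the mutually inverse elements $\tphi_{lH},\tphi_{-lH}\in\tHam(M)$:
$$c(a,\id)\;\leq\;c(a,\tphi_{lH})+c(a,\tphi_{-lH})\;=\;c(a,lH)+c(a,-lH),$$
then divide by $l$ and pass to the limit; the $c(a,\id)/l$ term vanishes in the limit regardless of the valuation of $a$. If $X$ is superheavy, applying superheaviness to $-H$ gives
$$\zeta(H)\;\geq\;-\zeta(-H)\;\geq\;-\sup_X(-H)\;=\;\inf_X H,$$
so $X$ is heavy. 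For the failure of the converse I would simply point to the meridian on $\T^2$, verified elsewhere in the paper to be heavy but not superheavy since it is strongly displaceable by a non-Hamiltonian shift, a property incompatible with superheaviness in view of (iii).

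The main obstacle is \emph{(ii)}. The strategy is to reduce it to the following key vanishing property of the partial quasi-state $\zeta$, established in \cite{EP-qst}: if $U\subset M$ is a stably displaceable open set, then $\zeta(H)=0$ for every $H\in C^{\infty}(M)$ with $\supp H\subset U$. Granting this, the argument is short. Assume for contradiction that a heavy set $X$ is stably displaceable. By compactness of $X$ and continuity of the displacing Hamiltonian isotopy on $M\times T^*\SP^1$, $X$ admits an open neighborhood $U\supset X$ that is itself stably displaceable. Pick $H\in C^{\infty}(M)$ with $\supp H\subset U$ and $H\equiv 1$ on $X$. The vanishing property forces $\zeta(H)=0$, whereas heaviness forces $\zeta(H)\geq\inf_X H=1$, a contradiction. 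The genuine technical content therefore lies entirely in the vanishing lemma, whose proof requires controlling $c(a,lH)/l$ by the stable displacement energy of $U$ via a lift of $H$ to $M\times T^*\SP^1$ together with a spectral comparison coming from the product structure of the Floer complex; this is the step I expect to be the real work, while everything above is a concise axiomatic wrap-up.
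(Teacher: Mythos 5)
Parts (i) and (iii) reproduce the paper's own arguments (Section~\ref{sec-basic-proofs}): your spectral-level derivation of $\zeta(H)+\zeta(-H)\geq 0$ is exactly how the triangle inequality for $\zeta$ (Theorem~\ref{thm-partial-qstate-partial-qmm-basic}) is obtained, and your Urysohn contradiction for (iii), combined with $\Symp$-invariance of $\zeta$, matches Proposition~\ref{prop-intersec} and the surrounding discussion; the heavy-but-not-superheavy meridian is also the paper's example.

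Part (ii) has a genuine gap. The ``key vanishing property'' you invoke---that $\zeta(H)=0$ whenever $\supp H$ lies in a \emph{stably} displaceable open set---is \emph{not} established in \cite{EP-qst}. The vanishing property actually available off the shelf (the ``partial additivity and vanishing'' axiom of Theorem~\ref{thm-partial-qstate-partial-qmm-basic}) applies only to functions supported on \emph{displaceable} sets; that version yields merely that heavy sets are non-displaceable, not stably non-displaceable. The entire content of (ii) is the upgrade from displaceable to stably displaceable support, which is precisely the lemma you are treating as given. In the paper this upgrade is proved in Section~\ref{sec-stab-nondispl-heavy-sets}, using the explicit computation $c([T],F_\alpha)=\alpha$ on $\T^2_R$ (Lemma~\ref{lem-torus}), the maximum-level-set Lemma~\ref{lem-max} (which is what the ordinary vanishing axiom buys), and---crucially---the product formula $c(a_1\otimes a_2, G_1+G_2)=c(a_1,G_1)+c(a_2,G_2)$ of Theorem~\ref{thm-spectral-sum}, whose proof via decorated $\Z_2$-graded complexes occupies most of Section~\ref{sec-proofs-prod}. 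Concretely, for $H\leq 0$ with $H^{-1}(0)=X$ and $H'=H+F_1$ on $M\times\T^2_R$, the product formula gives $\zeta'(H')=\zeta(H)+1$, while Lemma~\ref{lem-max} gives $\zeta'(H')<1$ if $X\times\{r=0\}$ is displaceable, so $\zeta(H)<0$, contradicting heaviness. Your closing sentence, which says the vanishing lemma ``requires \ldots a spectral comparison coming from the product structure of the Floer complex,'' correctly locates where the work lies, but it contradicts your earlier claim that the lemma is already available; as written, the plan for (ii) rests on an unproved assertion whose proof would reproduce essentially all of Sections~\ref{sec-proofs-prod} and~\ref{sec-stab-nondispl-heavy-sets}.
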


The following theorem discusses the relation between
heaviness/super\-heavi\-ness properties with respect to different
idempotents. In particular, it shows that $[M]$ plays a special
role among all the other non-zero idempotents in $QH_* (M)$.

\begin{thm}\label{thm-diff-idempotents}
Assume $a$ is a non-zero idempotent in the quantum homology. Then
\begin{itemize}

\item[{(i)}] Every set that is superheavy with respect to $[M]$ is
also superheavy with respect to $a$.

\item[{(ii)}] Every set that is heavy with respect to $a$ is also
heavy with respect to $[M]$.

\item[{(iii)}] Assume that the idempotent $a$ is a sum of non-zero
idempotents \break $e_1,\ldots,e_l$ and assume that a closed
subset $X\subset M$ is heavy with respect to $a$. Then $X$ is
heavy with respect to $e_i$ for at least one $i$.

\end{itemize}
\end{thm}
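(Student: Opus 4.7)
The plan rests on two standard properties of the spectral invariants $c(\cdot,\cdot)$, applied to the asymptotic functionals $\zeta_\alpha(H):=\lim_{l\to\infty}c(\alpha,lH)/l$ attached to different idempotents $\alpha$. The first is the triangle inequality $c(\alpha\ast\beta,\phi\psi)\leq c(\alpha,\phi)+c(\beta,\psi)$. Applied to the factorization $a=[M]\ast a$ with $\phi=\phi_{lH}$ and $\psi=\id\in\tHam(M)$, it gives $c(a,\phi_{lH})\leq c([M],\phi_{lH})+c(a,\id)$. Since $c(a,\id)$ is a finite constant independent of $l$, dividing by $l$ and letting $l\to\infty$ yields
$$\zeta_a(H)\;\leq\;\zeta_{[M]}(H)\qquad\text{for every }H\in C^\infty(M).$$
Parts (i) and (ii) are then immediate from the definitions: if $\zeta_{[M]}(H)\leq\sup_X H$ for all $H$, then also $\zeta_a(H)\leq\sup_X H$, giving (i); and if $\zeta_a(H)\geq\inf_X H$ for all $H$, then $\zeta_{[M]}(H)\geq\inf_X H$, giving (ii).

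For (iii) I would use a second standard input, the non-Archimedean inequality $c(\alpha+\beta,\phi)\leq\max\bigl(c(\alpha,\phi),\,c(\beta,\phi)\bigr)$, which reflects the action filtration on the Floer complex. Iterating over $a=e_1+\cdots+e_l$, taking $\phi=\phi_{lH}$, dividing by $l$ and passing to the asymptotic limit, this gives
$$\zeta_a(H)\;\leq\;\max_{1\leq i\leq l}\zeta_{e_i}(H)\qquad\text{for every }H\in C^\infty(M).$$

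I would then prove (iii) by contradiction. If $X$ is not heavy with respect to any $e_i$, then for each $i$ there is a smooth $F_i$ with $\zeta_{e_i}(F_i)<\inf_X F_i$. Replacing $F_i$ by $F_i-\inf_X F_i$ (and using the translation property $\zeta_\alpha(H+c)=\zeta_\alpha(H)+c$) we may assume $\inf_X F_i=0$ and $\zeta_{e_i}(F_i)\leq -\delta$ for a single $\delta>0$, namely the minimum of the finitely many gaps. Set $F:=\min_i F_i$, which is continuous and satisfies $F|_X\geq 0$, and choose a smooth $F^\varepsilon$ with $F^\varepsilon\leq F$ pointwise and $\|F^\varepsilon-F\|_{C^0}<\varepsilon$. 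Monotonicity of each $\zeta_{e_i}$, applied to $F^\varepsilon\leq F\leq F_i$, gives $\zeta_{e_i}(F^\varepsilon)\leq -\delta$ for every $i$, and the max-formula above upgrades this to $\zeta_a(F^\varepsilon)\leq -\delta$. On the other hand $\inf_X F^\varepsilon\geq -\varepsilon$, so for any $\varepsilon<\delta$ one has $\zeta_a(F^\varepsilon)<\inf_X F^\varepsilon$, contradicting heaviness of $X$ with respect to $a$. The main technical checkpoint is the non-Archimedean inequality for spectral invariants; once it is granted, the smoothing of $\min_i F_i$ and the normalization step are routine, and in particular no mutual-orthogonality assumption on the $e_i$ enters the argument.
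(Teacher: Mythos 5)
Your proofs of (i) and (ii) coincide with the paper's: both rest on the triangle inequality applied to $a=[M]*a$, giving $c(a,lH)\leq c([M],lH)+\nu(a)$ and hence $\zeta(a,H)\leq\zeta([M],H)$ in the limit. For (iii) your argument is correct but genuinely different from the paper's. Both start from the same key inequality $\zeta(a,F)\leq\max_i\zeta(e_i,F)$, coming from the characteristic exponent property of spectral numbers (which, as you note, needs no orthogonality of the $e_i$ beyond the nonvanishing of the relevant sums). From there the paper argues \emph{directly}: it builds a decreasing sequence $G_j\leq 0$ vanishing on $X$ that minorizes every nonpositive function vanishing near $X$, uses heaviness of $X$ with respect to $a$ to get $\zeta(a,G_j)=0$ for all $j$, pigeonholes to extract a single index $i$ with $\zeta(e_i,G_j)\geq 0$ along a subsequence, and then squeezes $\zeta(e_i,F)=0$ for all admissible $F$ by monotonicity and the $C^0$-Lipschitz property, invoking Proposition~\ref{prop-def-heavy}. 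You instead argue by \emph{contradiction}, compressing all the failures of heaviness into the single test function $\min_i F_i$ (after normalizing each $F_i$ by a constant), and deriving $\zeta(a,F^\varepsilon)\leq-\delta<\inf_X F^\varepsilon$ from monotonicity plus the max-inequality. Your route is shorter and avoids the subsequence extraction, at the cost of being nonconstructive about which $e_i$ works; the paper's route exhibits the index $i$ explicitly via the exhausting family $G_j$, which is the form in which the statement is then reused (e.g.\ in Theorem~\ref{thm-quant-semis}). Both are complete; the smoothing of $\min_i F_i$ is indeed routine and could even be bypassed entirely by working with the continuous extension of $\zeta$.
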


The next proposition shows that, in general, the heaviness of a
set {\it does depend} on the choice of an idempotent in the
quantum homology.

\begin{prop}
\label{prop-T2n-blown-up-at-one-point} Consider the torus
$\T^{2n}$ equipped with the standard symplectic structure $\omega
= dp\wedge dq$. Let $M^{2n}=\T^{2n}\sharp \overline{\C P^{n}}$ be
a symplectic blow-up of $\T^{2n}$ at one point (the blow up is
performed in a small ball around the point). Assume that the
Lagrangian torus $L\subset \T^{2n}$ given by $q=0$ does not
intersect the ball in $\T^{2n}$, where the blow up was performed.

Then the proper transform of $L$ (identified with $L$) is a
Lagrangian submanifold of $M$, which is not heavy with respect to
some non-zero idempotent $a\in QH_* (M)$ but heavy with respect to
$[M]$. (Here we work with $\cF=\Z_2$).
\end{prop}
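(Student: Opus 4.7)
\emph{Strategy.} I aim to verify that $L$ is heavy with respect to $[M]$ via the Albers condition and then construct an auxiliary monotone Lagrangian $T\subset M$, disjoint from $L$, together with a non-zero idempotent $a\in QH_*(M;\Z_2)$ with $a\neq [M]$, such that $T$ is superheavy with respect to $a$. Intersection rigidity of heavy and superheavy sets will then force $L$ to be non-heavy with respect to $a$.

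\emph{Heaviness with respect to $[M]$.} I will apply Theorem~\ref{thm-Albers-elem-implies-heavy}. Since the blow-up ball $B$ is disjoint from $L$, the inclusion $L\hookrightarrow M$ factors through $\T^{2n}\setminus B$, so the induced map $\pi_1(L)=\Z^n\hookrightarrow \pi_1(M)=\Z^{2n}$ is injective and the long exact sequence gives $\pi_2(M,L)\cong \pi_2(M)$. The spherical classes in $M$ are generated (in terms of period values) by the exceptional line $\ell$ with $c_1(\ell)=n-1$ and $\omega(\ell)=\epsilon$, so $M$ is spherically monotone and $L$ is monotone with minimal Maslov number $N_L=2(n-1)$. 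The class $[L]\in H_n(L;\Z_2)$ has non-zero image in $H_n(M;\Z_2)$ (since $L$ is a non-trivial $n$-subtorus of $\T^{2n}$ preserved by a local blow-up; this is verified by Mayer--Vietoris), and $\deg[L]=n > 3-n = \dim L+1-N_L$ for all $n\geq 2$. Hence the Albers condition is satisfied.

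\emph{Construction of $T$ and $a$; non-heaviness.} A neighborhood of the exceptional divisor in $M$ is symplectomorphic to $\overline{\C P^n}\setminus B'$ for a small ball $B'$, which I choose to be centered at a fixed point of the standard Hamiltonian $\T^n$-action on $\overline{\C P^n}$, disjoint from the Clifford torus $T\subset \overline{\C P^n}$. Then $T$ embeds into $M$ as a monotone Lagrangian torus contained in the blown-up region, in particular disjoint from $L$. The quantum homology $QH_*(\overline{\C P^n};\Z_2)$ is, after a finite extension of the Novikov ring, a product of fields, supplying non-trivial orthogonal idempotents with respect to each of which $T$ is superheavy in $\overline{\C P^n}$ (cf.\ Theorem~\ref{thm-monot-superh}). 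I then transfer one such idempotent to a non-trivial idempotent $a\in QH_*(M;\Z_2)$, $a\neq [M]$, with respect to which $T$ is superheavy in $M$. Theorem~\ref{thm-intro-heavy}(iii), applied to the pair of disjoint sets $L$ and $T$, then rules out heaviness of $L$ with respect to $a$.

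\emph{Main obstacle.} The essential technical point is the transfer of the superheavy idempotent from $QH_*(\overline{\C P^n};\Z_2)$ to $QH_*(M;\Z_2)$ and the preservation of superheaviness of $T$ under this transfer. This amounts to a careful comparison of the spectral invariants under the symplectic embedding of a neighborhood of the exceptional divisor into $M$, and constitutes the main technical content of the argument.
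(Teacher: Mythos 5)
Your first half (heaviness of $L$ with respect to $[M]$) is fine and matches the paper: $L$ is a monotone Lagrangian with $N_L=2N\geq 2$ whose fundamental class survives in $H_n(M;\Z_2)$, so Theorem~\ref{thm-Albers-elem-implies-heavy} applies. The second half, however, has a genuine gap exactly where you flag it, and the flagged step is not a routine technicality but the entire content of the claim. There is no mechanism for ``transferring'' an idempotent of $QH_*(\overline{\C P^n};\Z_2)$ to $QH_*(M;\Z_2)$, nor for transferring superheaviness of your torus $T$ along a symplectic embedding of a neighborhood of the exceptional divisor: spectral invariants and the partial quasi-state $\zeta(a,\cdot)$ are global objects of $(M,\omega)$, defined via Floer theory on all of $M$, and restriction to an open symplectically embedded piece gives no comparison of spectral numbers in either direction. (Note also that $QH_{2n}(\C P^n;\Z_2)$ is a field by Example~\ref{exam-CPn}, so it has no non-trivial idempotents to begin with; passing to a field extension of the Novikov ring does not produce an element of $QH_*(M)$.) As written, the argument establishes nothing about non-heaviness of $L$.

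The paper's route is different and avoids any auxiliary Lagrangian. It invokes McDuff's theorem that for a blow-up the algebra $QH_{2n}(M)$ splits off a field direct summand, and takes $a$ to be the unit of that field. By Remark~\ref{rem-semisimplicity-genuine-qstate} the associated $\zeta$ is a genuine quasi-state, so heavy and superheavy coincide for $a$. Then one observes that $L$ is displaceable by a \emph{non-Hamiltonian} symplectic isotopy of $M$ (a parallel shift of $\T^{2n}$, corrected by a Hamiltonian isotopy so as to be the identity on the blow-up ball), and since $a$ is preserved by $\Symp_0(M)$, Theorem~\ref{thm-intro-heavy}(iii) rules out superheaviness, hence heaviness, of $L$ with respect to $a$. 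If you want to salvage your approach you would have to replace the ``transfer'' by a genuine computation of $QH_*(M)$ and a direct verification that some class supported near the exceptional divisor divides a non-trivial idempotent; that is essentially McDuff's splitting theorem in disguise.
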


\medskip
\noindent Next, consider direct products of (super)heavy sets. We
start with the following convention on tensor products. Let
$\Gamma_i$, $i=1,2$, be two countable subgroups of $\R$. Let $E_i$
be a module over $\cK_{\Gamma_i}$.   We put
\begin{equation}\label{eq-tensor-hat}
E_1 \widehat{\otimes}_{\cK} E_2 = \bigg{(}E_1
\otimes_{\cK_{\Gamma_1}} \cK_{\Gamma_1 +\Gamma_2}\bigg{)}
\otimes_{ \cK_{\Gamma_1 +\Gamma_2}} \bigg{(}E_2
\otimes_{\cK_{\Gamma_2}} \cK_{\Gamma_1 +\Gamma_2}\bigg{)}\;.
\end{equation}
If $E_1$, $E_2$ are also rings we automatically assume that the
middle tensor product is the tensor product of rings. In simple
words, we extend both modules to $\cK_{\Gamma_1
+\Gamma_2}$-modules and consider the usual tensor product over
$\cK_{\Gamma_1 +\Gamma_2}$.

Given two symplectic manifolds, $(M_1,\omega_1)$ and
$(M_2,\omega_2)$, note that the subgroups of periods of the
symplectic forms satisfy
$$\Gamma(M_1\times
M_2,\omega_1\oplus\omega_2) = \Gamma(M_1,\omega_1) +
\Gamma(M_2,\omega_2)\;.$$ Furthermore, due to the K\"unneth
formula for quantum homology (see e.g. \cite[Exercise
11.1.15]{MS2} for the statement in the monotone case; the general
case in our algebraic setup can be treated similarly) there exists
a natural ring monomorphism linear over $\cK_{\Gamma_1 +\Gamma_2}$
$$ QH_{2n_1} (M_1)\widehat{\otimes}_\cK QH_{2n_2} (M_2)\hookrightarrow QH_{2n_1 +
2n_2}(M_1 \times M_2)\;,$$  We shall fix a pair of idempotents
$a_i \in QH_* (M_i)$, $i=1,2$. The notions of (super)heaviness in
$M_1,M_2$ and $M_1 \times M_2$ are understood in the sense of
idempotents $a_1,a_2$ and $a_1 \otimes a_2$ respectively.

\begin{thm}\label{thm-products}
Assume that $X_i$ is a heavy (resp. superheavy) subset of $M_i$
with respect to some idempotent $a_i$, $i=1,2$. Then the product
$X_1 \times X_2$ is a heavy (resp. superheavy) subset of $M$ with
respect to the idempotent $a_1\otimes a_2\in QH_\bullet (M_1\times
M_2)$.
\end{thm}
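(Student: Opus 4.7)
The plan is to reduce the statement to split Hamiltonians via a K\"unneth-type product formula for spectral invariants, combined with a localized bump-function construction that sandwiches an arbitrary Hamiltonian between split Hamiltonians adjusted to $X_1\times X_2$. First I would establish that for $H_i \in C^\infty(M_i)$ the Floer chain complex of $H_1 \oplus H_2$ on $(M_1\times M_2,\omega_1\oplus\omega_2)$ splits as a tensor product of the Floer complexes of the factors, compatibly with the action filtrations and with the ring monomorphism
\[
QH_\bullet(M_1)\,\widehat{\otimes}_\cK\, QH_\bullet(M_2) \hookrightarrow QH_\bullet(M_1\times M_2)
\]
recalled in the excerpt. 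This yields $c(a_1\otimes a_2,\, H_1\oplus H_2) = c(a_1, H_1) + c(a_2, H_2)$ and, passing to the $\zeta$-limit,
\[
\zeta_{12}(H_1\oplus H_2) = \zeta_1(H_1) + \zeta_2(H_2).
\]

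Given any $H\in C^\infty(M_1\times M_2)$ and $\epsilon>0$, set $c := \inf_{X_1\times X_2} H$ and $C := \sup_{X_1\times X_2} H$. Compactness of $X_1\times X_2$ and uniform continuity of $H$ produce an open product neighborhood $U_1\times U_2 \supset X_1\times X_2$ on which $c - \epsilon/2 < H < C + \epsilon/2$. Choose smooth cutoffs $\chi_i\in C^\infty(M_i,[0,1])$ with $\chi_i\equiv 1$ on $X_i$ and $\supp\chi_i\subset U_i$, and pick a constant $N$ sufficiently large. For the heavy case, set
\[
F_i := \tfrac{1}{2}(c - \epsilon/2)\chi_i - N(1-\chi_i).
\]
Then $F_i\leq (c-\epsilon/2)/2$ everywhere, and outside $U_1\times U_2$ at least one $F_i$ equals $-N$; for $N$ large this forces $F_1\oplus F_2 \leq H$ globally. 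On $X_i$ one has $F_i\equiv (c-\epsilon/2)/2$, so heaviness of $X_i$ gives $\zeta_i(F_i)\geq (c-\epsilon/2)/2$. Combining monotonicity of $\zeta_{12}$ (inherited from monotonicity of spectral invariants) with the product formula,
\[
\zeta_{12}(H) \,\geq\, \zeta_{12}(F_1\oplus F_2) \,=\, \zeta_1(F_1)+\zeta_2(F_2) \,\geq\, c-\epsilon/2,
\]
and letting $\epsilon\to 0$ yields heaviness of $X_1\times X_2$. The superheavy case is symmetric: set $G_i := \tfrac{1}{2}(C+\epsilon/2)\chi_i + N(1-\chi_i)$; then $G_1\oplus G_2 \geq H$ globally, $\sup_{X_i} G_i = (C+\epsilon/2)/2$, and superheaviness of $X_i$ yields $\zeta_{12}(H)\leq C+\epsilon/2$, hence $\leq C$ after $\epsilon\to 0$.

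The main obstacle is the product formula for spectral invariants itself, whose proof requires verifying at the chain level a K\"unneth splitting of Floer complexes that respects both the action filtrations and the algebraic tensor product of the excerpt; this is technically nontrivial but standard in the class $\cS$. Once this formula is in hand, the split approximation step is elementary, since one only needs $F_1\oplus F_2 \leq H$ globally while matching $H$ in the appropriate one-sided sense on $X_1\times X_2$ up to $\epsilon$, rather than a pointwise approximation of $H$ by split functions on $X_1\times X_2$ (which in general fails, as a simple two-point example in each factor shows).
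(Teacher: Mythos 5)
Your proposal is correct and follows essentially the same route as the paper: both arguments hinge on the product formula $c(a_1\otimes a_2,G_1+G_2)=c(a_1,G_1)+c(a_2,G_2)$ (Theorem~\ref{thm-spectral-sum}, whose chain-level K\"unneth proof is indeed the real work, carried out in Section~\ref{sec-proofs-prod}) and then sandwich an arbitrary Hamiltonian by split functions supported near $X_1\times X_2$. The only cosmetic difference is that the paper first normalizes via Proposition~\ref{prop-def-heavy} to functions vanishing near the product set, whereas you build the explicit cutoff sandwich directly from the definition.
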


\medskip
\noindent An important class of superheavy sets is given by stable
stems introduced and illustrated in
Section~\ref{subsec-intro-many-facets-of-displaceability}.

\begin{thm}\label{thm-stem}
Every stable stem is a superheavy subset with respect to any non-zero
idempotent $a\in QH_* (M)$. In particular, it is
strongly and stably non-displaceable.
\end{thm}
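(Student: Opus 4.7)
The plan is to argue by contradiction, exploiting the Poisson-commutative structure of ${\mathbb A}$ together with a stable version of the vanishing property of $\zeta$. Suppose a stable stem $X=\Phi^{-1}(p)$ of ${\mathbb A}$ fails to be superheavy with respect to some non-zero idempotent $a$: there exists $H\in C^\infty(M)$ with $\zeta(H)>\sup_X H$. After subtracting a constant (using $\zeta(H+c)=\zeta(H)+c$) we may assume $\sup_X H<0<\zeta(H)$. Setting $H^+:=\max(H,0)$, monotonicity of $\zeta$ gives $\zeta(H^+)\geq\zeta(H)>0$, while $\supp H^+$ is disjoint from $X$, so $K:=\Phi(\supp H^+)$ is a compact subset of ${\mathbb A}^*\setminus\{p\}$.

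The principal technical ingredient, which I would prove first as a separate lemma, is a \emph{stable vanishing property}: for any non-zero idempotent $a\in QH_*(M)$ and any non-negative $F\in C^\infty(M)$ with stably displaceable support, $\zeta(F)=0$. I would establish this by passing to a closed stabilization such as $M\times\T^2$ equipped with a split symplectic form: the class $a\otimes [\T^2]$ is a non-zero idempotent in $QH_*(M\times\T^2)$ via the K\"unneth embedding, and, for a sufficiently concentrated bump $\rho$ on $\T^2$, the support of $\bar F:=F\otimes\rho$ is genuinely Hamiltonian-displaceable in $M\times\T^2$ because $\supp F\times\{\text{pt}\}$ is (by the definition of stable displaceability and a compactness argument embedding $T^*\SP^1$ into $\T^2$). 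The standard vanishing axiom gives $\bar\zeta(\bar F)=0$, and comparing the spectral invariants of $lF$ on $M$ with those of $l\bar F$ on $M\times\T^2$ yields $\zeta(F)\leq \bar\zeta(\bar F)=0$. This comparison of spectral invariants across the stabilizing product is the main obstacle; once available, everything else is a matter of deploying the axioms of a partial quasi-state.

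With stable vanishing in hand, cover $K$ by finitely many open sets $V_1,\dots,V_N\subset{\mathbb A}^*\setminus\{p\}$ with each $\Phi^{-1}(V_i)$ stably displaceable (possible by the defining property of a stable stem and compactness of $K$), and choose a smooth partition of unity $\chi_1+\cdots+\chi_N\equiv 1$ on a neighborhood of $K$ subordinate to $\{V_i\}$, extended by zero. Set
\[
F_i:=\|H^+\|_\infty\cdot(\chi_i\circ\Phi).
\]
Since ${\mathbb A}$ is Poisson-commutative, the identity $\{f\circ\Phi,g\circ\Phi\}=\sum_{i,j}(\partial_i f\,\partial_j g)(\Phi)\{F_i,F_j\}=0$ for smooth $f,g$ on ${\mathbb A}^*$ shows $\{F_i,F_j\}=0$; each $F_i\geq 0$ has $\supp F_i\subset\Phi^{-1}(V_i)$ stably displaceable; and $H^+\leq\sum_i F_i$ pointwise (on $\supp H^+$ the partition sums to $1$, elsewhere $H^+=0$). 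Monotonicity, partial quasi-additivity of $\zeta$ on Poisson-commuting families, and stable vanishing then give
\[
0<\zeta(H^+)\;\leq\;\zeta\Big(\sum_{i=1}^N F_i\Big)\;\leq\;\sum_{i=1}^N\zeta(F_i)\;=\;0,
\]
a contradiction. Strong and stable non-displaceability of $X$ now follow from Theorem~\ref{thm-intro-heavy}.
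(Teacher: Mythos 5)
Your overall architecture is sound and close in spirit to the paper's: reduce superheaviness to a vanishing statement for non-negative functions supported away from the stem, cover $\Phi(\supp H^+)$ by sets with stably displaceable preimages, and use Poisson-commutativity of the pullbacks together with the triangle inequality for $\zeta$. That reduction (contradiction, truncation to $H^+$, partition of unity on ${\mathbb A}^*$, the chain $0<\zeta(H^+)\le\zeta(\sum_i F_i)\le\sum_i\zeta(F_i)$) is correct. The gap sits exactly where you flagged it: the \emph{stable vanishing} lemma. The mechanism you propose --- tensoring $F$ with a concentrated bump $\rho$ on $\T^2$ and ``comparing spectral invariants'' to obtain $\zeta(F)\le\bar\zeta(F\otimes\rho)$ --- does not work. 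With $0\le\rho\le 1$ one has $F\otimes\rho\le F\circ\text{pr}_1$, so monotonicity yields $\bar\zeta(F\otimes\rho)\le\bar\zeta(F\circ\text{pr}_1)=\zeta(F)$: the only inequality you can actually extract points the \emph{wrong} way. Moreover the product formula for spectral invariants (Theorem~\ref{thm-spectral-sum}) applies to split Hamiltonians $G_1(z_1)+G_2(z_2)$, not to products $F(z_1)\rho(z_2)$, whose Hamiltonian flow mixes the two factors; so there is no available comparison giving $\zeta(F)\le\bar\zeta(F\otimes\rho)$, and the inequality you need is precisely the content of the lemma you are trying to prove.

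The correct route --- and the one the paper takes --- is to work with \emph{sums} on $M\times\T^2_R$ and to introduce a second partition of unity in the cylinder coordinate $r$. Choose $R$ large and thin annuli $V_j=\{|r-r_j|<\delta\}$ covering $\T^2_R$ so that $\supp F\times V_j$ is genuinely displaceable for each $j$, with a subordinate partition of unity $\varrho_j(r)$. The pieces $(F\circ\text{pr}_1)\cdot\varrho_j$ pairwise Poisson-commute and have displaceable supports, so iterated partial additivity gives $\bar\zeta(F\circ\text{pr}_1)=0$; the product formula then identifies $\bar\zeta(F\circ\text{pr}_1)$ with $\zeta(F)$ (the paper runs this with the extra summand $\cos(2\pi r/R)$ and Lemma~\ref{lem-torus}, doing the partition of unity on $\Delta$ and the one on $\T^2_R$ simultaneously against the single function $\Phi^*H+\cos(2\pi r/R)$, but the content is the same). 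Note that the product formula is a genuinely nontrivial chain-level statement occupying all of Section~\ref{sec-proofs-prod}; it cannot be replaced by a soft ``comparison of spectral invariants across the stabilizing product''. Once the lemma is proved this way, the rest of your argument goes through.
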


\medskip
\noindent In the next section we present an example of stable stems
coming from Hamiltonian torus actions.


\subsection{Hamiltonian torus actions}\label{subseq-intro-2}

Fibers of the moment maps of Hamiltonian torus actions form an
interesting playground for testing the various notions of
displaceability and heaviness introduced above. Throughout the
paper we deal with {\it effective} actions only, that is we assume
that the map $\varphi: \T^k\to \Ham (M)$ defining the action is a
monomorphism. Furthermore, we assume that the moment map $\Phi =
(\Phi_1,\ldots, \Phi_k): M\to\R^k$ of the action is normalized:
$\Phi_i$ is a normalized Hamiltonian for all $i=1,\ldots, k$. By
the Atiyah-Guillemin-Sternberg theorem \cite{Atiyah},
\cite{Guill-Stern}, the image $\Delta = \Phi (M)$ of $\Phi$ is a
$k$-dimensional convex polytope, called the {\it moment polytope}.
The subsets $\Phi^{-1}(p),\; p \in \Delta$, are called {\it
fibers} of the moment map. A torus action is called {\it
compressible} if the image of the homomorphism $\varphi_{\sharp}:
\pi_1(\T^k)\to \pi_1(\Ham(M))$, induced by the action $\varphi$,
is a finite group.

\begin{thm} \label{thm-stab-disp}
Assume that $(M,\omega)$ is equipped with a compressible
Hamiltonian $\T^k$-action with moment map $\Phi$ and moment
polytope $\Delta$. Let $Y \subset \Delta$ be any closed convex
subset which does not contain $0$. Then the subset $\Phi^{-1}(Y)$
is stably displaceable. In particular, the fiber $\Phi^{-1}(0)$ is
a stable stem.
\end{thm}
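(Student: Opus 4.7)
The plan is to reduce, via convex separation, to displacing the positive sublevel set of a Hamiltonian that generates a contractible loop in $\Ham(M)$, and then apply a general stable-displacement principle for such Hamiltonians.

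First I would apply a hyperplane separation argument. Since $Y$ is closed, convex and does not contain $0$, there exist $v\in\R^k$ and $c>0$ with $\langle v,y\rangle\ge c$ for all $y\in Y$. By compactness of $Y$ and density of rational directions, I may perturb and rescale to arrange $v\in\Z^k$ while retaining $\langle v,y\rangle\ge c>0$ on $Y$. Setting $H:=\langle v,\Phi\rangle$, the integrality of $v$ means $H$ generates a closed one-parameter subgroup of the torus action, so $\phi_1^H=\id$ and the loop $t\mapsto \phi_t^H$ represents the class $\varphi_\sharp(v)\in\pi_1(\Ham(M))$; moreover $H\ge c$ on $\Phi^{-1}(Y)$. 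By compressibility, $\varphi_\sharp(v)$ has finite order, say $N$, so $F:=NH$ is an autonomous Hamiltonian whose time-$1$ flow is the identity, whose loop $\{\phi_t^F\}$ is contractible in $\Ham(M)$, and which satisfies $F\ge Nc>0$ on $\Phi^{-1}(Y)$.

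I would then invoke the following stable-displacement principle: if $F$ is an autonomous Hamiltonian on $M$ whose Hamiltonian flow is a contractible loop in $\Ham(M)$, then for every $\varepsilon>0$ the positive sublevel set $\{F\ge \varepsilon\}$ is stably displaceable. The mechanism, in the spirit of \cite{P-IMRN} and \cite{P-Rudnick}, converts a null-homotopy of the loop in $\Ham(M)$ into a compactly supported Hamiltonian isotopy on $M\times T^*\SP^1$ whose time-$1$ map translates the $r$-coordinate off $\{r=0\}$ on the locus where $F\ge\varepsilon$: the auxiliary $\theta$-circle of $T^*\SP^1$ absorbs the winding of the loop, while the null-homotopy ensures this absorption is compatible with compact support. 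Applying this principle to $F=NH$ and $\varepsilon=Nc$ yields the stable displaceability of $\Phi^{-1}(Y)\subset\{F\ge Nc\}$.

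The ``in particular'' claim follows immediately: any fiber $\Phi^{-1}(q)$ with $q\ne 0$ is $\Phi^{-1}(Y)$ for the closed convex set $Y=\{q\}$, hence stably displaceable, so $\Phi^{-1}(0)$ is a stable stem of the Poisson-commutative subspace of $C^\infty(M)$ spanned by $\Phi_1,\ldots,\Phi_k$. The main obstacle is the stable-displacement principle itself: the separation and compressibility reductions are routine once the principle is available, but turning an abstract null-homotopy in $\Ham(M)$ into a concrete compactly supported isotopy on $M\times T^*\SP^1$ that uniformly lifts the zero section off itself on an entire sublevel set is the technical heart of the theorem.
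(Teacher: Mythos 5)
Your reduction is exactly the paper's: choose a rational (hence, after scaling, integral) linear functional separating $Y$ from $0$, pull it back by $\Phi$ to get a Hamiltonian generating a circle subgroup of the action, and use compressibility to pass to a multiple $N H$ generating a contractible loop that is strictly positive on $\Phi^{-1}(Y)$. Your ``stable-displacement principle'' is precisely the paper's Theorem~\ref{thm-stab-nondisp}, and its proof is not as formidable as you suggest: given a null-homotopy $h^{(s)}_t$ of the loop with normalized Hamiltonians $H^{(s)}$, the explicit family
$$\Psi_s(x,r,\theta) = \bigl(h^{(s)}_{\theta}x,\; r-H^{(s)}(h^{(s)}_{\theta}x,\theta),\;\theta\bigr)$$
is a Hamiltonian isotopy of $M\times T^*\SP^1$, and $\Psi_1$ displaces $X\times\{r=0\}$ whenever $H_\theta\neq 0$ on $X$; compact support is then arranged by a cutoff since the displacement takes place in a bounded region.

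One correction is needed in how you state the principle: as written (``if the flow of an autonomous $F$ is a contractible loop, then $\{F\ge\varepsilon\}$ is stably displaceable'') it is false --- a positive constant Hamiltonian generates the constant, hence contractible, loop, yet $\{F\ge\varepsilon\}=M$. The correct hypothesis, as in the paper, is that the generating Hamiltonian is \emph{normalized} and nonvanishing on the set to be displaced; this is what prevents shifting by constants. Your application is unaffected because the theorem's standing assumption is that the moment map is normalized, so $F=N\langle v,\Phi\rangle$ is normalized, but the hypothesis must be built into the lemma for the logic to close.
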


\medskip
\noindent Note that for symplectic toric manifolds, that is when
$2k =\dim M$, the point $0$ is the barycenter of the moment
polytope with respect to the Lebesgue measure. This follows from
our assumption on the normalization of the moment map.

\medskip
\noindent Theorems~\ref{thm-stem} and \ref{thm-stab-disp} imply that
the fiber $\Phi^{-1}(0)$ of a compressible torus action is stably
non-displaceable, and thus we get the complete description of stably
displaceable fibers for such actions.

In the case when the action is not compressible, the question of
the complete description of stably non-displaceable fibers remains
open. We make a partial progress in this direction by presenting
at least one such fiber, called {\it the special fiber},
explicitly in the case when $(M,\omega)$ is spherically monotone:
$$\left. [\omega]\right|_{H_2^S (M)} = \kappa \left. c_1
(TM)\right|_{H_2^S (M)}, \;\;\kappa >0\;.$$

The special fiber can be described via the mixed action-Maslov
homomorphism introduced in \cite{Pol-mixed}: Let $(M^{2n},\omega)$
be a spherically monotone symplectic manifold, and let $\{f_t\}, t
\in [0,1]$, be any loop of Hamiltonian diffeomorphisms, with $f_0
=f_1 =\id$, generated by a 1-periodic normalized Hamiltonian
function $F(x,t)$. The orbits of any Hamiltonian loop are
contractible due to the standard Floer theory\footnote{The Floer
theory guarantees the existence of at least one contractible
periodic orbit -- this is not obvious {\it a priori} if  $\{f_t\}$
is not an autonomous flow. Since all the orbits of $\{f_t\}$ are
homotopic, all of them are contractible.}. Pick any point $x \in
M$ and any disc $u:\D^2\to M$ spanning the orbit $\gamma =
\{f_tx\}$. Define the action\footnote{Note that our action
functional and the one in \cite{Pol-mixed} are of opposite signs.}
of the orbit by
$$\cA_F (\gamma,u) := \int_0^1 F(\gamma(t),t) dt - \int_{\D^2} u^\ast
\omega\;.$$

Trivialize the symplectic vector bundle $u^*(TM)$ over $\D^2$ and
denote by $m_F(\gamma,u)$ the Maslov index of the loop of
symplectic matrices corresponding to $\{f_{t*}\}$ with respect to
the chosen trivialization. One readily checks that, in view of the
spherical monotonicity, the quantity
$$I (F) := -\cA_F (\gamma,u)-\frac{\kappa}{2}m_F(\gamma,u)\;$$
does not depend on the choice of the point $x$ and the disc $u$,
and is invariant under homotopies of the Hamiltonian loop
$\{f_t\}$. In fact, $I$ is a well defined homomorphism from
$\pi_1(\Ham(M))$ to $\R$ (see \cite{Pol-mixed}, \cite{Weinst}).

Assume again that $\varphi: \T^k \to \Ham(M,\omega)$ is a
Hamiltonian torus action. Write $\varphi_{\sharp}$ for the induced
homomorphism of the fundamental groups. Since the target space
$\R^k$ of the moment map $\Phi$ is naturally identified with
$\text{Hom}(\pi_1(\T^k),\R)$, the pull back $-\varphi_{\sharp}^*I$
of the mixed action-Maslov homomorphism with the reversed sign can
be considered as a point of $\R^k$. We call it {\it a special point}
and denote by ${p}_{spec}$. The preimage $\Phi^{-1}({p}_{spec})$ is
called {\it the special fiber} of the moment map. In the case $k=1$,
when $\Phi$ is a real-valued function on $M$, we will call
$p_{spec}$ {\it the special value} of $\Phi$.

If $k=n$ and $M$ is a symplectic toric manifold, then
${p}_{spec}$ can be defined in purely combinatorial terms involving only
the polytope $\Delta$.
Namely, pick a vertex ${\bf x}$ of $\Delta$.
Since $\Delta$ in this case is a {\it Delzant polytope} \cite{Delz},
there is a unique (up to a permutation) choice of vectors
${\bf v}_1, \ldots, {\bf v}_n$
which
\begin{itemize}
  \item{originate at ${\bf x}$;}
  \item{span the $n$ rays containing the
edges of $\Delta$ adjacent to ${\bf x}$;      }
  \item{form a basis of $\Z^n$ over $\Z$.      }
\end{itemize}

\begin{prop}
\label{prop-p-spec}
\begin{equation}
\label{eqn-prop-p-spec}
{p}_{spec} = {\bf x} + {\kappa}\sum_{i=1}^n
{\bf v}_i.
\end{equation}

\end{prop}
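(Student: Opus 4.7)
The plan is to evaluate the special point coordinate by coordinate, by computing $I(\varphi_\sharp(e_i))$ for the standard generators $e_1,\ldots,e_n$ of $\pi_1(\T^n) = \Z^n$. By the Atiyah--Guillemin--Sternberg convexity theorem, the vertex ${\bf x}$ is the moment image of a unique $\T^n$-fixed point $x_0 \in M$; I would use this $x_0$ as the base point in the definition of $I$. Since $x_0$ is fixed by the whole action, the orbit of $x_0$ under the Hamiltonian loop $\{f_t^i\}$ generated by $\Phi_i$ is the constant loop $\gamma_i \equiv x_0$, which I span by the constant disc $u_i \equiv x_0$. For this choice $\int_{\D^2} u_i^*\omega = 0$, and hence
$$ \cA_{\Phi_i}(\gamma_i, u_i) = \int_0^1 \Phi_i(x_0)\, dt = \langle {\bf x}, e_i \rangle. $$

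To compute the Maslov index I would invoke the equivariant symplectic normal form near $x_0$: there is a $\T^n$-equivariant Darboux chart identifying a neighbourhood of $x_0$ with an open set in $\C^n$ on which $\T^n$ acts linearly and diagonally with integer weights $\mu_1,\ldots,\mu_n \in \Z^n$, and the Delzant property guarantees that these weights form a $\Z$-basis of $\Z^n$. In this chart the local moment map reads $\Phi(z) = {\bf x} + \pi \sum_k |z_k|^2 \mu_k$, so the edges of $\Delta$ emanating from ${\bf x}$ point in the directions $\mu_k$; this identifies $\mu_k$ with ${\bf v}_k$. Trivialising $u_i^*(TM)$ by the constant frame $\C^n$, the linearised flow of $\Phi_i$ rotates the $k$-th complex line at angular speed $2\pi \langle {\bf v}_k, e_i \rangle$, and I get
$$ m_{\Phi_i}(\gamma_i, u_i) = 2 \sum_{k=1}^n \langle {\bf v}_k, e_i \rangle = 2 \Bigl\langle \sum_{k=1}^n {\bf v}_k,\, e_i \Bigr\rangle. $$

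Assembling the two ingredients,
$$ I(\varphi_\sharp e_i) = -\cA_{\Phi_i}(\gamma_i,u_i) - \frac{\kappa}{2}\, m_{\Phi_i}(\gamma_i,u_i) = -\langle {\bf x}, e_i \rangle - \kappa \Bigl\langle \sum_{k=1}^n {\bf v}_k,\, e_i \Bigr\rangle, $$
so that $p_{spec} = -\varphi_\sharp^* I = {\bf x} + \kappa \sum_{k=1}^n {\bf v}_k$, which is exactly (\ref{eqn-prop-p-spec}). Since $I$ is a homomorphism, the values on the $e_i$ determine it completely, and the computation is valid for any one vertex of $\Delta$; independence of the choice of vertex is automatic because the left-hand side of (\ref{eqn-prop-p-spec}) does not see the vertex at all. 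The main obstacle is bookkeeping of sign and normalisation conventions: the sign of the action functional (opposite to that of \cite{Pol-mixed}), the identification of the weights in the linear model with the edge vectors of the polytope (with the correct sign), and the rule that a loop of rotations $t \mapsto e^{2\pi i \alpha t}$ contributes $2\alpha$ to the Maslov index. Once these are pinned down, the computation itself is a direct substitution into the definitions.
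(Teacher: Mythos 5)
Your proof is correct and takes essentially the same route as the paper: compute $I$ for each circle factor using the constant periodic orbit at the fixed point corresponding to the vertex ${\bf x}$ and the constant spanning disc, obtaining $\cA_{\Phi_i}={\bf x}_i$ and the Maslov term $2\sum_k v_k^i$ from the isotropy weights, which are the edge vectors ${\bf v}_k$. You just spell out the equivariant Darboux/Delzant normal form a bit more explicitly than the paper does.
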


\begin{proof}

The vertices of the moment polytope are in one-to-one
correspondence with the fixed points of the action. Let $x \in M$
be the fixed point corresponding to the vertex ${\bf x} = ({\bf
x}_1,\ldots, {\bf x}_n)$. Then the vectors ${\bf v}_j =
(v_j^1,\ldots, v_j^n)$, $j=1,\ldots, n$, are simply the weights of
the isotropy $\T^n$-action on $T_x M$. Since the definition of the
mixed action-Maslov invariant of a Hamiltonian circle action does
not depend on the choice of a 1-periodic orbit and a disc spanning
it, let us compute all $I_i$, $l=1, \ldots, n$, using the constant
periodic orbit concentrated at the fixed point $x$ and the
constant disc $u$ spanning it. Clearly, $$\cA_{\Phi_i} (x,u) =
\Phi_i (x)={\bf x}_i \;\; \text{and} \;\; m_{\Phi_i} (x,u) =
2\sum_{j=1}^n v_j^i \;\; \forall i=1,\ldots,n,$$ which readily
yields formula \eqref{eqn-prop-p-spec}.
\end{proof}

\medskip
\noindent E.Shelukhin pointed out to us that by summing up
equations \eqref{eqn-prop-p-spec} over all the vertices ${\bf
x}^{(1)},\ldots,{\bf x}^{(m)}\in\R^n$ of the moment polytope, one
readily gets that ${p}_{spec}= \frac{1}{m}\sum_i {\bf x}^{(i)}$.

\begin{thm}
\label{thm-main-nondispl-toric} Assume $M^{2n}$ is a spherically
monotone symplectic manifold equipped with a Hamiltonian
$\T^k$-action. Then the special fiber of the moment map is
superheavy with respect to any (non-zero) idempotent $a\in QH_{2n}
(M)$. In particular, it is stably and strongly non-displaceable.
\end{thm}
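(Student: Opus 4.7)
The plan is to combine the Seidel representation for spectral invariants of Hamiltonian loops with the structural properties of partial symplectic quasi-states. By Theorem~\ref{thm-diff-idempotents}(i), it suffices to prove superheaviness with respect to $[M]$; write $\zeta := \zeta_{[M]}$. The goal is then the inequality
\[
\zeta(H) \leq \sup_{\Phi^{-1}(p_{spec})} H \quad \forall H \in C^\infty(M).
\]

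\textbf{Step 1 (Reduction to $\T^k$-invariant Hamiltonians).} For each $t \in \T^k$ the functions $H \circ \varphi_t$ pairwise Poisson-commute, since they share level sets. As $\zeta$ is $\Ham(M)$-invariant and linear on Poisson-commutative subspaces (Theorem~\ref{thm-partial-qstate-partial-qmm-basic}), I would conclude that $\zeta$ equals its value on the torus average $\tilde H := \int_{\T^k} H \circ \varphi_t\, dt$. Writing $\tilde H = G \circ \Phi$ for a smooth function $G$ on $\Delta$, one has $G(p_{spec}) = \tilde H|_{\Phi^{-1}(p_{spec})} \leq \sup_{\Phi^{-1}(p_{spec})} H$, so the task reduces to showing $\zeta(G \circ \Phi) \leq G(p_{spec})$.

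\textbf{Step 2 (Computing $\zeta$ on affine combinations of $\Phi_j$).} For any linear functional $L \in (\R^k)^*$ with integer coefficients, $L \circ \Phi$ generates a Hamiltonian loop $\gamma_L$ with $[\gamma_L] = \varphi_\sharp(L^\vee)$ in $\pi_1(\Ham(M))$. Using Seidel's theorem together with the identification of the shift of the spectral invariant along a loop with the mixed action-Maslov homomorphism,
\[
c([M], L \circ \Phi) = -I([\gamma_L]) = L(p_{spec}),
\]
where the last equality is the very definition $p_{spec} = -\varphi_\sharp^* I$. Since $\zeta$ is homogeneous in the limit and continuous in the $C^0$-norm, this identity extends first to rational and then to all real linear $L$; linearity on Poisson-commutative spaces then yields $\zeta(L \circ \Phi + c) = L(p_{spec}) + c$ for all affine combinations.

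\textbf{Step 3 (From affine to general $G$).} The subspace $\mathcal{A}_\Phi = \{G \circ \Phi : G \in C^\infty(\Delta)\}$ of $C^\infty(M)$ is Poisson-commutative, and $\zeta|_{\mathcal{A}_\Phi}$ is linear and $C^0$-continuous. The key claim is that on this subspace $\zeta$ is quasi-affine, i.e. has the form $\zeta(G \circ \Phi) = G(p)$ for a single point $p \in \Delta$; Step 2 then forces $p = p_{spec}$ and delivers the desired inequality. To establish the quasi-affine property I would proceed as in \cite{EP-qst}: approximate $G$ from above by a finite maximum of affine functions $L_i$ and from below by a finite minimum of affine functions $L'_j$, apply monotonicity together with the trick that $\zeta$ intertwines min/max with max/min on pairwise Poisson-commuting families, and pass to the limit using $C^0$-density of piecewise-affine functions and the Lipschitz property of $\zeta$.

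\textbf{Main obstacle.} Step 2 is essentially a known Seidel/action-Maslov identity, and Step 1 is a standard averaging argument for partial quasi-states. The heart of the proof is Step 3: extending an identity from affine functions of $\Phi$ to arbitrary smooth functions of $\Phi$ requires exploiting the full strength of the algebraic axioms of a partial quasi-state on the Poisson-commutative subalgebra $\mathcal{A}_\Phi$, including the interplay between $\zeta$ and the lattice operations, which is where any non-convexity of $G$ must be handled.
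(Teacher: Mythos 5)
There is a genuine gap, and it sits exactly where you locate the ``heart of the proof'', in Step~3. The axioms of a partial symplectic quasi-state (Theorem~\ref{thm-partial-qstate-partial-qmm-basic}) give you, on the Poisson-commutative subalgebra $\{G\circ\Phi\}$, only monotonicity, positive homogeneity and \emph{sub}additivity; there is no axiom intertwining $\zeta$ with lattice operations, and the functional $G\mapsto\zeta(G\circ\Phi)$ need not be evaluation at a point (for the fibration of $\T^2$ by meridians it equals $\max G$, since every meridian is heavy but none is superheavy -- the paper's own example in Section~\ref{subsubsec-questions-Poisson-commut-subspaces}). Knowing $\zeta(L\circ\Phi)=L(p_{spec})$ for affine $L$ yields, via monotonicity applied to the tangent affine function at $p_{spec}$, the bound $\zeta(G\circ\Phi)\leq G(p_{spec})$ only for \emph{concave} $G$. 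But the case you actually need is convex $G$: to dominate an arbitrary $H$ from above by a function of $\Phi$ whose value at $p_{spec}$ is $\sup_L H$, one must take a steep convex $\bar H$ with minimum at $p_{spec}$, and for convex $\bar H$ the affine comparison gives only the useless reverse inequality $\zeta(\Phi^*\bar H)\geq \bar H(p_{spec})$. The missing inequality is precisely Proposition~\ref{prop-toric-main} of the paper, and it is not a formal consequence of the affine identity: its proof estimates $D_{EH}(\gamma)=\cA_{EH}([\gamma,u])-\tfrac{\kappa}{2}CZ_{EH}([\gamma,u])$ for \emph{every} periodic orbit $\gamma$ of the flow of $E\Phi^*\bar H$ lying over $p\in\Delta$, establishing $D_{EH}(\gamma)\approx E\bar H(p)+E\bar H'(p)(p_{spec}-p)$ up to bounded error by means of the quasi-morphism property of the Conley--Zehnder index (Proposition~\ref{prop-CZ-qmm}), and only then invokes convexity of $\bar H$ together with the spectrality axiom. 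No approximation of $G$ by maxima and minima of affine functions can substitute for this Floer-theoretic computation.

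Two further points. Step~1 as written is incorrect: the functions $H\circ\varphi_t$ for distinct $t$ do not share level sets and do not Poisson-commute, and even on a genuinely Poisson-commutative subspace $\zeta$ is only subadditive, so one cannot conclude $\zeta(H)=\zeta(\tilde H)$ (at best one would get $\zeta(\tilde H)\leq\zeta(H)$, the wrong direction for superheaviness). The correct reduction is by domination and monotonicity, $H\leq\Phi^*\bar H$ with $\bar H$ convex and $\bar H(p_{spec})=\sup_L H$, which is how the paper proceeds. Step~2 is essentially sound, but the identity $\lim_l c([M],l(L\circ\Phi))/l=L(p_{spec})$ is itself Theorem~\ref{thm-Calabi-qmm-Ham-loops}, proved in the paper with the same $D_G$ machinery, so it cannot be imported as a black box. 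Finally, note that the paper's passage from circle actions to $\T^k$-actions is done not by affine approximation but by a Fourier decomposition of $\bar H$ into functions of single linear coordinates $\langle{\bf v},\cdot\rangle$ combined with subadditivity; even that device still requires the one-dimensional statement for arbitrary, not merely affine, functions of each $\Phi_{\bf v}$, i.e.\ the convex case once again.
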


\medskip
\noindent Let us mention that, in particular, the special fiber is
non-empty and so ${p}_{spec} \in \Delta$. Moreover ${p}_{spec}$ is
an interior point of $\Delta$ -- otherwise $\Phi^{-1} ({p}_{spec})$
is isotropic of dimension $< n$ and hence displaceable (see e.g.
\cite{Biran-Ciel}).

\begin{rem}
\label{rem-spec-fiber-1} {\rm If $\dim M = 2\dim \T^k$ (that is we
deal with a symplectic toric manifold), the special fiber, say
$L$, is a Lagrangian torus. In fact, this torus is monotone: for
every $D \in \pi_2(M,L)$ we have
$$\int_D \omega = \kappa\cdot m^L(D)\;,$$
where $m^L$ stands for the Maslov class of $L$. This is an
immediate consequence of the definitions. }
\end{rem}

\begin{rem}
\label{rem-spec-fiber-2} {\rm Note that when $M$ is spherically
monotone and the action is compressible
Theorems~\ref{thm-stab-disp} and \ref{thm-main-nondispl-toric}
match each other: in this case $p_{spec}=0$ and therefore the
special fiber is a stable stem by Theorem~\ref{thm-stab-disp}. It
is unknown whether this property persists for the special fibers
of non-compressible actions. }
\end{rem}

\begin{exam}
\label{exam-2} {\rm Let $M$ be the monotone symplectic blow up of
$\C P^2$ at $k$ points ($0\leq k \leq 3$) which is equivariant
with respect to the standard $\T^2$-action and which is performed
away from the Clifford torus in $\C P^2$. Since the blow-up is
equivariant, $M$ comes equipped with a Hamiltonian $\T^2$-action
extending the $\T^2$-action on $\C P^2$. The Clifford torus is a
fiber of the moment map of the $\T^2$-action on $\C P^2$. Let
$L\subset M$ be the Lagrangian torus which is the proper transform
of the Clifford torus under the blow-up -- it is a fiber of the
moment map of the $\T^2$-action on $M$. Using
Proposition~\ref{prop-p-spec} it is easy to see that $L$ is the
special fiber of $M$. According to
Theorem~\ref{thm-main-nondispl-toric}, it is stably and strongly
non-displaceable. In fact, it is a stem: the displaceability of
all the other fibers was checked for $k=0$ in \cite{BEP}, for
$k=1$ in \cite{EP-qst} and for $k=2,3$ in
\cite{McDuff-letter}.}\end{exam}

\medskip
\noindent We refer to
Section~\ref{subsubsec-questions-Poisson-commut-subspaces} for
further discussion of related problems and very recent advances.

\bigskip
\noindent {\sc Digression: Calabi vs. action-Maslov.}
The method used to prove Theorem~\ref{thm-main-nondispl-toric}
also allows to prove the following result involving the mixed
action-Maslov homomorphism.  Denote by ${\hbox{\rm vol}\, (M)}$
the symplectic volume of $M$. Consider the function $\mu: \tHam
(M)\to\R$ defined by
$$ \mu (\phi_H) := - {\hbox{\rm vol}\, (M)}
\lim_{l\to +\infty} c (a, \phi_H^l )/l. $$ In the case when $a$ is
the unity in a field that is a direct summand in the decomposition
of the $\cK$-algebra $QH_{2n}(M,\omega)$, as an algebra, into a
direct sum of subalgebras, $\mu$ is a homogeneous quasi-morphism
on $\tHam(M)$ called {\it Calabi quasi-morphism}
\cite{EP-qmm},\cite{EP-toric-proc},\cite{Ostr-qmm}; in the general
case it has weaker properties \cite{EP-qst}. With this language
the functional $\zeta$ (on normalized functions) is induced (up to
a constant factor) by the pull-back of $\mu$ to the Lie algebra of
$\tHam(M)$.

Following P.Seidel we described in \cite{EP-qmm} the restriction
of $\mu$ (in fact, for {\it any} spherically monotone $M$) on
$\pi_1 (\Ham  (M))\subset \tHam (M)$ in terms of the Seidel
homomorphism $\pi_1 (\Ham  (M))\to QH^{inv}_* (M)$, where
$QH^{inv}_* (M)$ denotes the group of invertible elements in the
ring $QH_* (M)$. Here we give an alternative description of
$\left. \mu\right|_{\pi_1 (Ham (M))}$ in terms of the mixed
action-Maslov homomorphism $I$ which, in turn, also provides
certain information about the Seidel homomorphism.

\begin{thm}
\label{thm-Calabi-qmm-Ham-loops} Assume $M$ is spherically
monotone and let $\mu$ be defined as above for some  non-zero
idempotent $a\in QH_* (M)$. Then
$$\left. \mu\right|_{\pi_1 (\Ham  (M))} = {\hbox{\rm vol}\, (M)}\cdot I.$$

\end{thm}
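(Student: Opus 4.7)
The plan is to factor $\mu|_{\pi_1(\Ham(M))}$ through the Seidel homomorphism $S:\pi_1(\Ham(M))\to QH^{inv}_{*}(M)$ and then reduce the theorem to a valuation computation matched against the geometric definition of $I$. The key Floer-theoretic input is the compatibility of spectral invariants with $S$: for any $\gamma\in\pi_1(\Ham(M))$ viewed as an element of $\tHam(M)$, any $\phi\in\tHam(M)$, and any $a\in QH_*(M)$,
\[
c(a,\phi\gamma)=c(a*S(\gamma),\phi)\;.
\]
Setting $\phi=\id$ and iterating (using multiplicativity of $S$) gives $c(a,\gamma^l)=c(a*S(\gamma)^l,\id)$ for every $l\ge 1$.

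The second step is to pass to the limit. At the identity element of $\tHam(M)$ the spectral invariant reduces to the natural $\omega$-valuation $\nu$ on the Novikov ring extended coefficient-wise to $QH_*(M)$, so $c(b,\id)=\nu(b)$ for every $b\in QH_*(M)$. Because $S(\gamma)$ is invertible and $a$ is a non-zero idempotent, a two-sided use of the triangle inequality for $c(\cdot,\id)$ --- applied to $a*S(\gamma)^l$ and, dually, to $a=(a*S(\gamma)^l)*S(\gamma)^{-l}$ --- forces $\nu(a*S(\gamma)^l)=\nu(a)+l\,\nu(S(\gamma))$, whence
\[
\mu(\gamma)=-\hbox{\rm vol}\,(M)\cdot\nu(S(\gamma))\;,
\]
independently of the chosen idempotent $a$. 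In particular $\mu|_{\pi_1(\Ham(M))}$ is a genuine group homomorphism.

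The last step is to identify $-\nu(S(\gamma))$ with $I(\gamma)$ using the geometric definition of the Seidel element. Each term in $S(\gamma)$ is labelled by a homotopy class of sections of the Hamiltonian fibration $P_\gamma\to S^2$, carrying a Novikov monomial whose $s$-weight is minus the symplectic area of the section and whose $q$-weight is minus the vertical first Chern class. A contractible $1$-periodic orbit $\gamma_0$ of the normalized generator $F$ together with a spanning disc $u$ provides a distinguished section whose area and Chern class are (with the standard signs) $-\cA_F(\gamma_0,u)$ and $-\tfrac12 m_F(\gamma_0,u)$; any other section differs from this one by a spherical class $A\in H_2^S(M)$, shifting the two Novikov weights by $-\omega(A)$ and $-c_1(A)$. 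Under the spherical monotonicity $\omega=\kappa c_1$, the combination $(s\text{-weight})+\tfrac{\kappa}{2}(q\text{-weight})$ is preserved by every such shift, so all contributions share the common value $\cA_F(\gamma_0,u)+\tfrac{\kappa}{2}m_F(\gamma_0,u)=-I(\gamma)$ of the $\omega$-valuation. This yields $\nu(S(\gamma))=-I(\gamma)$, which combined with the previous step proves the theorem.

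The main obstacle is tracking consistently the several sign conventions in play: the sign in the definition of $\mu$, the direction of the Seidel identity, the sign of $\cA_F$ (which this paper warns is opposite to the one in \cite{Pol-mixed}), and the $\pm$ in passing from geometric areas of sections to Novikov exponents. A painless sanity check is to evaluate both sides on a single explicit generator --- for instance the full rotation loop of a circle subgroup in the standard $\T^n$-action on $\C P^n$, for which both $S(\gamma)$ and $I(\gamma)$ are computable by hand --- which pins down any residual overall sign.
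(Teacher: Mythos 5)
Your route (factor through the Seidel homomorphism, then compute valuations) is genuinely different from the paper's, which never invokes the Seidel element: there one uses the spectrality axiom to realize $c(a,H^{(l)})$ as the action of a capped orbit of Conley--Zehnder index $2n$, the quasi-morphism property of the Conley--Zehnder index to conclude $c(a,H^{(l)})=D_{H^{(l)}}(\gamma)+O(1)$ for the shift-invariant combination $D_G=\cA_G-\frac{\kappa}{2}CZ_G$, and the fact that for a loop $D_{H^{(l)}}$ is orbit-independent and equals $-lI(\alpha)$. As written, however, your Steps 2 and 3 contain a genuine gap.

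The quantity that is constant over all Novikov terms of $S(\gamma)$ is \emph{not} the valuation $\nu$ but the mixed combination $(s\hbox{-exponent})-\kappa\cdot(q\hbox{-exponent})$: two contributing section classes differ by a spherical class $B$, which shifts the $s$-exponent by $-\omega(B)$ and the $q$-exponent by $-c_1(B)$, and monotonicity kills this combination but not the $s$-exponent itself, since distinct classes of $\bar{\pi}_2(M)$ have $\omega(B)=\kappa c_1(B)\neq 0$. The valuation $\nu$ entering the normalization axiom $c(b,\id)=\nu(b)$ reads off \emph{only} the $s$-exponent, so your Step 3 conclusion $\nu(S(\gamma))=-I(\gamma)$ fails whenever $S(\gamma)$ has homology coefficients in more than one degree; in general $\nu(S(\gamma))=-I(\gamma)+\kappa\max_A\bigl(-c_1^{vert}(A)\bigr)$, which may exceed $-I(\gamma)$ by up to $\kappa n$. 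Correspondingly, in Step 2 the two-sided triangle inequality only yields $\nu(a)-\nu(S(\gamma)^{-l})\leq\nu(a*S(\gamma)^l)\leq\nu(a)+\nu(S(\gamma)^l)$, and neither $\nu(S(\gamma)^{\pm l})=\pm l\,\nu(S(\gamma))$ holds in general ($\nu$ is only subadditive under $*$; indeed $\nu(S(\gamma)^l)<l\nu(S(\gamma))$ for large $l$ as soon as a class of degree $<2n$ contributes). Your asserted identity $\nu(a*S(\gamma)^l)=\nu(a)+l\nu(S(\gamma))$ would compute $\mu(\gamma)=-\hbox{\rm vol}\,(M)\cdot\nu(S(\gamma))$, which is not $\hbox{\rm vol}\,(M)\cdot I(\gamma)$ in general; and the suggested sanity check on $\C P^n$ would not detect this, since there the Seidel elements are single monomials. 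The argument is repairable: the constancy of the mixed combination together with the degree bound $0\leq\deg b_A\leq 2n$ gives $-lI(\gamma)\leq\nu\bigl(S(\gamma^{l})\bigr)\leq -lI(\gamma)+\kappa n$ for all $l\in\Z$, and feeding these two-sided, $l$-uniform estimates (for both $\gamma^{l}$ and $\gamma^{-l}$) into the triangle inequality does pin down $\lim_l\nu(a*S(\gamma)^l)/l=-I(\gamma)$. But that uniform control of the discrepancy between $\nu$ and the mixed combination is precisely the missing ingredient.
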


Note that, in particular, $\left. \mu\right|_{\pi_1 (\Ham  (M))}$
does not depend on $a$ used to define $\mu$. The theorem also
implies that $\mu$ descends to a quasi-morphism on $\Ham  (M)$ if
and only if $I: \pi_1 (\Ham  (M))\to\R$ vanishes identically
(since $\mu$ descends to a quasi-morphism on $\Ham  (M)$ if and
only if $\left. \mu\right|_{\pi_1 (\Ham  (M))} \equiv 0$ -- see
e.g. \cite{EP-qmm}, Prop. 3.4). The proof of the theorem is given
in Section~\ref{sec-pf-thm-Calabi-qmm-Ham-loops}.

Let us mention also that, interestingly enough, the homomorphism
$I$ coincides with the restriction to $\pi_1 (\Ham  (M))$ of yet
another quasi-morphism on $\tHam (M)$ constructed by P.Py (see
\cite{Py-1,Py-2}).

\bigskip
\noindent {\sc Digression: Action-Maslov homomorphism and Futaki
invariant.}  This remark grew  from an observation pointed out to
us by Chris Woodward -- we are grateful to him for that. Assume
that our symplectic manifold $M$ is complex K\"ahler (i.e. the
symplectic structure on $M$ is induced by the K\"ahler one) and
Fano (by this we mean here that $[\omega]=c_1$). Assume also that
a Hamiltonian $\SP^1$-action $\{ f_t\}$ preserves the K\"ahler
metric and the complex structure. For instance, if $M^{2n}$ is a
symplectic toric manifold it can be equipped canonically with a
complex structure and a K\"ahler metric invariant under the
$\T^n$-action on $M$, hence under the action of any
$\SP^1$-subgroup $\{ f_t\}$ of $\T^n$.

Let $V$ be the Hamiltonian vector field generating the Hamiltonian
flow $\{ f_t\}$. Since $\{ f_t\}$ preserves the complex structure,
one can associate to $V$ its {\it Futaki invariant} ${\mathbb F}
(V)\in\C$ \cite{Futaki}. It has been checked by E.Shelukhin
\cite{Shelukhin} that, up to a universal constant factor, this
Futaki invariant is equal to the value of the mixed action-Maslov
homomorphism on the loop $\{ f_t\}$:
$${\mathbb F} (V) = {\it const}\, \cdot I (\{f_t\}).$$

Note that if such an $M$ admits a K\"ahler-Einstein metric then
the Futaki invariant has to vanish \cite{Futaki} -- thus if $I
(\{f_t\})\neq 0$ the manifold does not admit a K\"ahler-Einstein
metric. Moreover, if $M^{2n}$ is toric the opposite is also true:
if the Futaki invariant vanishes for any $V$ generating a subgroup
of the torus $\T^n$ acting on $M$ then $M$ admits a
K\"ahler-Einstein metric -- this follows from a theorem by Wang
and Zhu \cite{Wang-Zhu}, combined with a previous result of
Mabuchi \cite{Mabuchi}. In terms of the moment polytope, the
vanishing of the Futaki invariant, and accordingly the existence
of a K\"ahler-Einstein metric, on a K\"ahler Fano toric manifold
means precisely that the special point of the polytope coincides
with the barycenter.


\subsection{Super(heavy) monotone Lagrangian
submanifolds}\label{subsec-mon-lagr}

Let $(M^{2n},\omega)$ be a closed spherically monotone symplectic
manifold with $[\omega] = \kappa \cdot c_1(TM)$ on $\pi_2(M)$,
$\kappa > 0$. Let $L \subset M$ be a closed monotone Lagrangian
submanifold with the minimal Maslov number $N_L \geq 2$. As
usually, we put $N_L =+\infty$ if $\pi_2(M,L)=0$. As before, we
work with the basic field $\cF$ which is either $\Z_2$ or $\C$. In
the case $\cF =\C$, we assume that $L$ is relatively spin, that is
$L$ is orientable and the 2nd Stiefel-Whitney class of $L$ is the
restriction of some integral cohomology class of $M$.

\medskip \noindent {\bf Disclaimer:} In the case $\cF = \C$ the results of this section
are conditional: We take for granted that Proposition~\ref{pro-BC}
below, which was proved by Biran and Cornea \cite{BC}  for
homologies with $\Z_2$-coefficients, extends to homologies with
$\C$-coefficients. In each of the specific examples below we will
explicitly state which $\cF$ we are using and whenever we use
$\cF=\C$ we assume that $L$ is relatively spin.

\medskip
\noindent Denote by $j$ the natural morphism  $j: H_\bullet
(L;\cF) \to H_\bullet (M;\cF)$. We say that $L$ {\it satisfies the
Albers condition} \cite{Albers} if there exists an element $S \in
H_\bullet (L;\cF)$ so that $j(S) \neq 0$ and
$$\deg S > \dim L +1 -N_L\;.$$
We shall refer to such $S$ as to {\it an Albers element} of $L$.

\begin{exam}
\label{exam-fund-class-as-Albers-element} {\rm Assume $[L]\in
H_\bullet (L;\cF)$ and $j ([L])\in H_\bullet (M; \cF)$ is
non-zero. This means precisely that $[L]$ is an Albers element of
$L$.

A closed monotone Lagrangian submanifold $L$ which satisfies this
condition (and whose minimal Maslov number is greater than $1$)
will be called {\it homologically non-trivial} in $M$.
 }
\end{exam}

\medskip
\noindent \begin{thm} \label{thm-Albers-elem-implies-heavy} Let
$L$ be a closed monotone Lagrangian submanifold satisfying the
Albers condition. Then $L$ is heavy with respect to $[M]$. In
particular, any homologically non-trivial Lagrangian submanifold
is heavy with respect to $[M]$.

\end{thm}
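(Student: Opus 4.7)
The plan is to prove the heaviness inequality $\zeta(H)\geq \inf_L H$ by producing, for each positive integer $l$ and a dense collection of normalized smooth $1$-periodic Hamiltonians $H$, a non-zero class in the Hamiltonian Floer homology $HF_*(lH)$ whose action filtration level is bounded below by $l\cdot\inf_L H$ and whose image in $QH_*(M)$ under the PSS isomorphism is a specified non-zero class. Together with the triangle inequality relating the spectral invariants of $[M]$ and of an arbitrary non-zero $a\in QH_*(M)$, this will yield the desired bound on $\zeta=\zeta_{[M]}$.

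To construct such a class, I invoke Proposition~\ref{pro-BC} of Biran--Cornea. Their pearl/cluster homology machinery associates to the Albers element $S\in H_\bullet (L;\cF)$ a class in the Lagrangian quantum homology of $L$ that survives the pearl differential; composing with a Lagrangian PSS-type morphism into $HF_*(lH)$ produces a class whose image in $QH_*(M)$ is $j(S)\neq 0$. The degree inequality $\deg S > \dim L + 1 - N_L$ is precisely what forces the pearl differentials out of $S$ to vanish for dimension reasons, guaranteeing that the class persists. Because the construction is built from perturbed $J$-holomorphic pearls and strips anchored on $L$, it respects the action filtration: every generator contributing to the resulting Floer chain has action at least $\inf_L(lH)=l\inf_L H$, up to an additive error that vanishes upon dividing by $l$.

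With such a class in hand, the definition of the spectral invariant gives $c(j(S),lH)\geq l\inf_L H$ for every $l$. Dividing by $l$ and letting $l\to\infty$ yields
\[
\zeta_{j(S)}(H)\ :=\ \lim_{l\to\infty}\frac{c(j(S),lH)}{l}\ \geq\ \inf_L H\;.
\]
To pass back from $j(S)$ to $[M]$, apply the triangle inequality $c(a\ast b,H\#G)\leq c(a,H)+c(b,G)$ with $a=[M]$, $b=j(S)$ and $G=0$; since $[M]\ast j(S)=j(S)$, this gives $c(j(S),lH)\leq c([M],lH)+c(j(S),0)$. Dividing by $l$ and passing to the limit yields $\zeta(H)=\zeta_{[M]}(H)\geq \zeta_{j(S)}(H)\geq \inf_L H$, as required.

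The main obstacle is the content of Proposition~\ref{pro-BC} itself: one must verify that the Biran--Cornea construction produces a Floer class with the stated non-vanishing under PSS \emph{and} with the claimed action filtration bound. The Albers degree condition enters at exactly this point, as the numerical input that prevents $S$ from being a boundary in the pearl complex, equivalently that prevents its PSS image from being zero in $QH_*(M)$. Once the action-filtered form of the Biran--Cornea result is in place, the manipulation of spectral invariants above is purely formal.
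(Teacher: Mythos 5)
There is a genuine gap, and it lies exactly at the step you yourself flag as ``the content of Proposition~\ref{pro-BC}'': you assert that the Biran--Cornea construction produces a Floer class with PSS image $j(S)$ whose action level is bounded \emph{below} by $\inf_L(lH)$, i.e.\ that $c(j(S),lH)\geq l\inf_L H$. Proposition~\ref{pro-BC}(iii) gives the opposite-direction estimate $c(j^q(x),H)\leq \sup_L H$: a chimney with boundary on $L$ produces an output cycle of action \emph{at most} $\sup_L H$ (action minus non-negative energy), so the image class is represented in the subcomplex of low action, which bounds its spectral number from above, not from below. Your claimed inequality is in fact false in general: for the meridian $L=\{r=0\}\subset\T^2$ with $S=[\text{point}]$, take $H=f(r)$ with maximum $1$ on $L$ and minimum $-\beta<0$; then $\inf_L H=1$, while Poincar\'e duality and Lemma~\ref{lem-torus} give $c([\text{point}],H)=-c([T],-H)=-\beta<1$. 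So the inequality $c(j(S),lH)\geq l\inf_L H$ cannot be the engine of the proof. (Your final transfer step via the triangle inequality, $c(j(S),lH)\leq c([M],lH)+\nu(j(S))$, is fine as far as it goes; it is the input to it that fails.)

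The missing idea is Poincar\'e duality for spectral invariants. The paper takes the upper bound $c(j^q(S),H)\leq\sup_L H$, substitutes $H=-F$, and uses the duality property $c(b,\phi)=-\inf_{a\in\Upsilon(b)}c(a,\phi^{-1})$ to convert it into the lower bound $c(T,F)\geq\inf_L F$ \emph{for every class $T$ with $\Pi(T,j^q(S))\neq 0$} -- not for $j(S)$ itself. One then needs such a $T$ with $\Pi([M]*T,j^q(S))\neq 0$: choose a singular class $T$ with $T\circ j(S)\neq 0$ and use part (i) of Proposition~\ref{pro-BC}, which says $j^q(S)=j(S)+w^{-1}y$ with $w^{-1}y$ of negative valuation, so the quantum corrections do not contribute to the free term of the Frobenius pairing and $\Pi(T,j^q(S))=T\circ j(S)\neq 0$. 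The triangle inequality then gives $c([M],kF)\geq\inf_L(kF)-\nu(T)$, and dividing by $k$ yields heaviness with respect to $[M]$. Note that this chain of reasoning uses $j^q(S)$, the pairing $\Pi$, and the duality axiom in an essential way; none of them can be bypassed by the direct action estimate you propose.
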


\medskip
\noindent
\begin{exam} \label{exam-lagr-1}{\rm
\noindent Assume that $\pi_2 (M,L)=0$. Then the homology class of
a point is an Albers element of $L$, and hence $L$ is heavy. Note
that in this case heaviness cannot be improved to superheaviness:
the meridian on the two-torus is heavy but not superheavy. Here we
took $\cF=\Z_2$.}
\end{exam}

\medskip
\noindent \begin{exam}[Lagrangian spheres in Fermat hypersurfaces]
\label{exam-Fermat} {\rm More examples of heavy (but not
necessarily superheavy) monotone Lagrangian submanifolds can be
constructed as follows\footnote{We thank P.Biran for his
indispensable help with these examples.}.

Let $M\subset \C P^{n+1}$ be a smooth complex hypersurface of degree
$d$. The pull-back of the standard symplectic structure from $\C
P^{n+1}$ turns $M$ into a symplectic manifold (of real dimension
$2n$). If $d\geq 2$, then, as it is explained, for instance, in
\cite{Biran-ECM}, $M$ contains a Lagrangian sphere: $M$ can be
included into a family of algebraic hypersurfaces of $\C P^{n+1}$
with quadratic degenerations at isolated points and the vanishing
cycle of such a degeneration can be realized by a Lagrangian sphere
following \cite{Arn-Floer-mem-volume}, \cite{Donaldson},
\cite{Seidel-PhD}, \cite{Seidel-graded}, \cite{Seidel-ECM}.

Let $M\subset \C P^{n+1}$ be a projective hypersurface of degree
$d$, $2\leq d< n+2$. The minimal Chern number of $M$ equals
$N:=n+2-d>0$. Let $L^n\subset M^{2n}$ be a simply connected
Lagrangian submanifold (for instance, a Lagrangian sphere).

First, consider the case when $n$ is even, $L$ is relatively spin
and the Euler characteristics of $L$ does not vanish (this is the
case for a sphere).  Then the homology class $j([L])\in H_n (M;
\Z)$ is non-zero: its self-intersection number in $M$ up to the
sign equals the Euler characteristic. Thus $[L]$ is an Albers
element. (Here we use $\cF = \C$). In view of
Theorem~\ref{thm-Albers-elem-implies-heavy}, $L$ is heavy with
respect to $[M]$.

Second, suppose that $n$ is of arbitrary parity but $n > 2d-3$,
and no restriction on the Euler characteristics of $L$ is assumed
anymore. This yields $N_L=2N > n+1$ and thus $L$ satisfies the
Albers condition with the class of a point $P$ as an Albers
element. Thus $L$ is heavy with respect to $[M]$ -- here we use
$\cF=\Z_2$.

Finally, fix  $n \geq 3$ and an even number $d$ such that $4 \leq
d < n+2$. Consider a Fermat hypersurface of degree $d$
$$M = \{-z_0^d+z_1^d+\ldots+z_{n+1}^d = 0\} \subset \C P^{n+1}\;.$$
Its real part $L:= M \cap \R P^{n+1}$ lies in the affine chart $z_0
\neq 0$ and is given by the equation
$$x_1^d+\ldots+x_{n+1}^d=1,$$
where $x_j := \text{Re}(z_j/z_0)\;.$ Since $d$ is even, $L$ is an
$n$-dimensional sphere. As it was explained above, $L$ is heavy
with respect to $[M]$ if either $n$ is even (and $\cF=\C$) or $n >
2d-3$ (and $\cF=\Z_2$). However, in either case $L$ {\it is not
superheavy} with respect to $[M]$. Indeed, let $\Sigma_d \approx
\Z_d$ be the group of complex roots of unity. Given a vector
$\alpha=(\alpha_1,\ldots, \alpha_n)\in (\Sigma_d)^{n+1}$, denote
by $f_\alpha$ the symplectomorphism of $M$ given by
\begin{equation}\label{eq-fermat-alpha}
f_\alpha (z_0:z_1:\ldots :z_{n+1}) = (z_0: {\alpha_1}
z_1:\ldots:{\alpha_{n+1}} z_{n+1})\;.
\end{equation} If all $\alpha_j\in \C\setminus \R$,
then $\alpha_j x \notin \R$ whenever $x \in \R \setminus\{0\}$,
and thus $f_\alpha(L) \cap L = \emptyset$. Therefore $L$ is
strongly displaceable and the claim follows from the part (iii) of
Theorem~\ref{thm-intro-heavy}.}
\end{exam}

\medskip

The next result gives a user-friendly sufficient condition of
superheaviness.

\begin{thm}\label{thm-monot-superh}
Assume $L$ is homologically non-trivial in $M$ and assume $a \in
QH_{2n} (M)$ is a non-zero idempotent divisible by $ j([L])$ in
$QH_\bullet (M)$, that is $a \in j([L]) * QH_\bullet (M)$. Then
$L$ is superheavy with respect to $a$.
\end{thm}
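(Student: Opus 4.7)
The plan is to deduce superheaviness by extracting, from the Biran--Cornea quantum module machinery, a two-sided spectral bound on $c(a,H)$ in terms of $\inf_L H$ and $\sup_L H$, and then to pass to the stabilized limit defining $\zeta$.

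First, I would verify that the Albers condition is in force. Because $L$ is homologically non-trivial we have $j([L]) \neq 0$, and $\deg [L] = \dim L > \dim L + 1 - N_L$ since $N_L \geq 2$; thus $[L] \in H_\bullet(L;\cF)$ is an Albers element. This legitimizes the use of Proposition~\ref{pro-BC} and, in particular, of the Lagrangian quantum inclusion sending the fundamental class of $L$ in the pearl complex to $j([L]) \in QH_\bullet(M)$.

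The heart of the proof is to establish, for every Hamiltonian $H$ and every $b \in QH_\bullet(M)$ with $a = j([L]) \ast b$, the two-sided inequality
\begin{equation*}
\int_0^1 \inf_L H_t \, dt + c(b,0) \;\leq\; c(a, H) \;\leq\; \int_0^1 \sup_L H_t \, dt + c(b,0).
\end{equation*}
The left inequality is a refinement of the spectral estimate already underlying Theorem~\ref{thm-Albers-elem-implies-heavy}: any Floer cycle representing $a = j([L]) \ast b$ is, via the module action of $QH_\bullet(M)$ on the Lagrangian pearl complex of $L$, pushed through a complex whose filtration is controlled from below by $\inf_L H_t$, with a Novikov-valuation correction $c(b,0)$. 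The right inequality is the dual statement and provides the new input: using Proposition~\ref{pro-BC}, one represents $a$ by a PSS-image of $[L]$ multiplied by $b$, and bounds the action of this representative from above by $\int \sup_L H_t \, dt + c(b,0)$ by analyzing the action filtration on the Lagrangian pearl complex, where the generators sit near $L$.

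Substituting $lH$ for $H$, dividing by $l$, and taking $l \to \infty$, the corrections $c(b,0)/l$ disappear and one obtains $\inf_L H \leq \zeta(H) \leq \sup_L H$ for every $H \in C^\infty(M)$; the right-hand bound is exactly superheaviness of $L$ with respect to $a$, and the left-hand bound recovers heaviness as a bonus. The chief obstacle is the upper bound in the displayed inequality: whereas the lower bound parallels the argument for Theorem~\ref{thm-Albers-elem-implies-heavy} and uses only positivity of the pearl differential on the action filtration, the upper bound requires one to actually produce a cycle for $a$ that is localized near $L$ at action level essentially $\sup_L H$. It is here that the divisibility hypothesis $a \in j([L]) \ast QH_\bullet(M)$ is essential, since it is precisely this factorization that allows the PSS construction of Section~\ref{subsec-mon-lagr} to realize $a$ geometrically via $L$.
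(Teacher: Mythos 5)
Your proposal is correct and follows essentially the same route as the paper: the key point is exactly the combination of the Biran--Cornea estimate $c(j^q([L]),H)=c(j([L]),H)\leq\sup_L H$ (Proposition~\ref{pro-BC}(ii)--(iii)) with the factorization $a=j([L])*b$ and the triangle inequality, giving $c(a,H)\leq\sup_L H+c(b,0)$, followed by homogenization. The two-sided inequality you display is more than is needed (superheaviness is only the upper bound, and your lower-bound correction should read $-c(b,0)$ rather than $+c(b,0)$, cf.\ inequality \eqref{eq-3-1}), but this does not affect the argument.
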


\medskip
\noindent The homological non-triviality of $L$ in the hypothesis
of the theorem means just that $[L]$ is an Albers element of $L$
(see Example~\ref{exam-fund-class-as-Albers-element}). In fact,
the theorem can be generalized to the cases when $L$ has other
Albers elements -- see Remark~\ref{rem-BC-gen} (ii).

\medskip
\noindent
\begin{exam}[Lagrangian spheres in quadrics] \label{exam-quadric} {\rm Here we work with
$\cF=\C$. Let $M$ be the real part of the Fermat quadric
$M=\{-z_0^2+ \sum_{j=1}^{n+1} z_j^2 =0\}$. Assume that $n$ is even
and $L$ is a simply connected Lagrangian submanifold with
non-vanishing Euler characteristic (e.g. a Lagrangian sphere).
Under this assumption, $[L]\in H_\bullet (L)$ and $j([L])\neq 0$,
since $L$ has non-vanishing self-intersection. Denote by $p \in
H_* (M; \cF)$ the class of a point. The quantum homology ring of
$M$ was described by Beauville in \cite{Beauville}. In particular,
$p*p = w^{-2}[M]$, where $w = s^{\kappa n}q^{n}$.  Thus
$$a_{\pm}:= \frac{[M]\pm pw}{2}$$
are idempotents. One can show that $j([L])$ divides $a_-$ and
hence $L$ is $a_-$-superheavy.  Since $a_{-}$ is invariant under
the action of $\Symp (M)$, the manifold $L$ is strongly
non-displaceable.

 For simplicity, we present the calculation in the case $n=2$ -- the
general case is absolutely analogous. The 2-dimensional quadric is
symplectomorphic to $(\SP^2 \times \SP^2,\omega \oplus \omega)$.
Denote by $A$ and $B$ the classes of $[\SP^2]\times
[\text{point}]$ and $[\text{point}] \times [\SP^2]$ respectively.
Since the symplectic form vanishes on $j([L])$ we get that $j([L])
= l(B-A)$ with $l\neq 0$. It is known that $A*B=p$ and $B*B =
w^{-1}[M]$. Thus $j([L]) * \frac{1}{2l} wB = a_-$, that is
$j([L])$ divides $a_-$.

\medskip
\noindent In particular, the Lagrangian anti-diagonal
$$\Delta:= \{(x,y)\in \SP^2 \times \SP^2 \;:\; x=-y\}\;,$$
which is diffeomorphic to the $2$-sphere, is superheavy with
respect to $a_-$. It is unknown whether $\Delta$ is super-heavy
with respect to $a_+$. Further information on superheavy
Lagrangian submanifolds in the quadrics can be extracted from
\cite{BC}.}
\end{exam}

\medskip \noindent
\begin{exam}[A non-heavy monotone Lagrangian
torus in $\SP^2\times \SP^2$]\label{exam-exotic-torus} {\rm
Consider the quadric $M = \SP^2 \times \SP^2$ from the previous
example.  We will think of $\SP^2$ as of the unit sphere in $\R^3$
whose symplectic form is the area form divided by $4\pi$. We will
work again with $\cF=\C$. Interestingly enough, such an $M$
contains a monotone Lagrangian torus that is not heavy with
respect to $a_{-}$.

Namely, consider a submanifold $K$ given by equations\footnote{We
thank Frol Zapolsky for his help with calculations in this
example.}
$$K = \{(x,y) \in \SP^2 \times \SP^2\;:\;
x_1y_1+x_2y_2+x_3y_3=-\frac{1}{2}, x_3+y_3=0\}\;.$$ One readily
checks that $K$ is a monotone Lagrangian torus with $N_K=2$ which
represents a zero element in $H_2(M;\cF)$ (both with $\cF=\C$ and
$\cF=\Z_2$). Thus $H_\bullet (K; \cF)$ does not contain any Albers
element. Furthermore, $K$ is disjoint from the Lagrangian
anti-diagonal $\Delta$ and hence is not heavy with respect to
$a_{-}$ since, as it was shown above, $\Delta$ is superheavy with
respect to $a_{-}$. In particular, $K$ is {\it an exotic monotone
torus}: it is not symplectomorphic to the Clifford torus which is
a stem and hence $a_{-}$-superheavy.  A further study of exotic
tori in products of spheres is currently being carried out by
Y.Chekanov and F.Schlenk.

It is an interesting problem to understand whether $K$ is
superheavy with respect to $a_+$, or at least non-displaceable.
Identify $M \setminus\ \{\rm{ the\ diagonal}\}$ with the unit
co-ball bundle of the 2-sphere. After such an identification
$\Delta$ corresponds to the zero section, while $K$ corresponds to
a monotone Lagrangian torus, say $K'$. Interestingly enough, the
Lagrangian Floer homology of $K'$ in $T^* \SP^2$ (with $\cF =
\Z_2$) does not vanish as was shown by Albers and Frauenfelder in
\cite{AF}, and thus $K$ is not displaceable in $M \setminus\
\{{\rm the\ diagonal}\}$. Thus the question on
(non)-displaceability of $K$ is related to understanding of the
effect of the compactification of the unit co-ball bundle to
$\SP^2 \times \SP^2$. }
\end{exam}

\medskip

The proofs of theorems above are based on spectral estimates due
to Albers \cite{Albers} and Biran-Cornea \cite{BC}. Furthermore,
the results above admit various generalizations in the framework
of Biran-Cornea theory of quantum invariants for monotone
Lagrangian submanifolds, see \cite{BC} and the discussion in
Section~\ref{sec-Lagr-proofs} below.

\subsection{An effect of
semi-simplicity}\label{subsec-effect-semisimp}

Recall that a commutative (finite-dimensional) algebra $Q$ over a
field $\cB$ is called {\it semi-simple} if it splits into a direct
sum of fields as follows: $Q = Q_1 \oplus\ldots\oplus Q_d\;$,
where
\begin{itemize}
\item{}each $Q_i \subset Q$
is a finite-dimensional linear subspace over $\cB$;
\item{} each $Q_i$ is a
field with respect to the induced ring structure;
\item{} the multiplication in $Q$ respects the splitting:
$$(a_1,\ldots,a_d)\cdot(b_1,\ldots,b_d) = (a_1 b_1,\ldots,a_d b_d).$$
\end{itemize}
A classical theorem of Wed\-der\-burn (see e.g.
\cite{VanDerWaerden}, \S 96) implies that the semi-simplici\-ty is
equivalent to the absence of nilpotents in the algebra.

\begin{rem}
\label{rem-semisimplicity-genuine-qstate} {\rm Assume that the
$\cK$-algebra $QH_{2n}(M,\omega)$ splits, as an algebra, into a
direct sum of two algebras, at least one of which is a field, and
let $e$ be the unity in that field. In particular, this is the
case when $QH_{2n}(M,\omega)=Q_1 \oplus\ldots\oplus Q_d$ is
semi-simple and $e$ is the unity in one of the fields $Q_i$. A
slight generalization of the argument in \cite{EP-qst,Ostr-qmm}
(see \cite{EP-toric-proc}, the remark on pp. 56-57) shows that the
partial quasi-state $\zeta(e,\cdot)$ associated to $e$ is
$\R$-homogeneous (and not just $\R_+$-homogeneous as in the
general case).

This immediately yields that {\it every set which is heavy with
respect to $e$ is automatically superheavy with respect to $e$}.

In fact, in this situation $\zeta$ is a genuine {\it symplectic
quasi-state} in the sense of \cite{EP-qst} and, in particular, a
{\it topological quasi-state} in the sense of Aarnes \cite{Aar}
(see \cite{EP-qst} for details). In \cite{Aar} Aarnes proved
 an analogue of the Riesz representation theorem for
topological quasi-states which generalizes the correspondence
between genuine states (that is positive linear functionals on
$C(M)$) and measures. The object $\tau_\zeta$ corresponding to a
quasi-state $\zeta$ is called a {\it quasi-measure} (or a {\it
topological measure}). With this language in place, the sets that
are (super)heavy with respect to $\zeta$ are nothing else but the
closed sets of the full quasi-measure $\tau_\zeta$. Any two such
sets have to intersect for the following basic reason: any
quasi-measure is finitely additive on disjoint closed subsets and
therefore if two closed subsets of $M$ of the full quasi-measure
do not intersect, the quasi-measure of their union must be greater
than the total quasi-measure of $M$, which is impossible.}
\end{rem}

\medskip
\noindent\begin{exam}\label{exam-CPn} {\rm In this example we
again assume that $\cF=\Z_2$. Let $M=\C P^n$ be equipped with the
Fubini-Study symplectic structure $\omega$, normalized so that
$[\omega]=c_1$, and let $A\in H_{2n-2} (M)$ be the homology class
of the hyperplane. One readily verifies the following
$\cK$-algebra isomorphism
$$QH_{2n}(M)\cong \cK [X]/ \langle X^{n+1} -
u^{-1}\rangle,$$ where
$$\cK = \Z_2 [[u] = \{ z_k u^k + z_{k-1} u^{k-1}+\ldots , z_i\in \Z_2\  \forall i \}$$
is the field of Laurent-type series in $u:= s^{n+1}$ with
coefficients in $\Z_2$ and $X= q A$. Since no root of degree $2$
or more of $u^{-1}$ is contained in $\cK$, the polynomial $P$ is
irreducible over $\cK$ for any $n$ (see e.g. \cite{Lang}, Theorem
9.1) and therefore $QH_{2n} (M)$ is a field. Hence the collections
of heavy and superheavy sets with respect to the fundamental class
coincide.

We claim that $L:= \R P^n \subset \C P^n$ is superheavy. The case
$n=1$ corresponds to the equator of the sphere, which is known to
be a stable stem. For $n \geq 2$, note that $N_L =n+1$ and $S=[\R
P^2]$ is an Albers element of $L$. Therefore, $L$ is $[M]$-heavy
by Theorem~\ref{thm-Albers-elem-implies-heavy}, and hence
superheavy. }
\end{exam}

\medskip \noindent
The next result follows directly from
Theorem~\ref{thm-diff-idempotents} (iii) and
Remark~\ref{rem-semisimplicity-genuine-qstate}:

\begin{thm}\label{thm-quant-semis}
Assume that $QH_{2n}(M)$ is semi-simple and splits into a direct
sum of $d$ fields whose unities will be denoted by $e_1,\ldots,
e_d$. Assume that a closed subset $X\subset M$ is heavy with
respect to a non-zero idempotent $a$ -- as one can easily see,
such an idempotent has to be of the form $a=e_{j_1} +\ldots +
e_{j_l}$ for some $1\leq j_1<\ldots < j_l\leq d$. Then $X$ is
superheavy with respect to some $e_{j_i}$, $1\leq i\leq l$.

\end{thm}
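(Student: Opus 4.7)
The plan is to combine the two ingredients the authors have already flagged: the decomposition statement Theorem~\ref{thm-diff-idempotents}(iii) and the rigidity upgrade contained in Remark~\ref{rem-semisimplicity-genuine-qstate}. Before using them, I would dispatch the incidental algebraic observation interpolated in the statement, namely that any non-zero idempotent $a\in QH_{2n}(M)=Q_1\oplus\ldots\oplus Q_d$ is necessarily of the form $a=e_{j_1}+\ldots+e_{j_l}$. Writing $a=(a_1,\ldots,a_d)$ with $a_i\in Q_i$, the relation $a^2=a$ decouples as $a_i^2=a_i$ in each field $Q_i$; since in a field the only idempotents are $0$ and $1$, each $a_i$ equals either $0$ or $e_i$, giving the claimed form.

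Next, with this decomposition in hand, I would apply Theorem~\ref{thm-diff-idempotents}(iii) to the heavy set $X$ viewed as heavy with respect to $a=e_{j_1}+\ldots+e_{j_l}$. The conclusion of that theorem furnishes an index $i\in\{1,\ldots,l\}$ for which $X$ is heavy with respect to the individual summand $e_{j_i}$.

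Finally, I would invoke Remark~\ref{rem-semisimplicity-genuine-qstate}. Since $e_{j_i}$ is the unity of the field $Q_{j_i}$ appearing as a direct summand of the semi-simple $\cK$-algebra $QH_{2n}(M)$, the partial symplectic quasi-state $\zeta(e_{j_i},\cdot)$ is in fact a genuine ($\R$-homogeneous) symplectic quasi-state in the sense of \cite{EP-qst}. As recorded in that remark, in this situation the heavy and superheavy collections associated to $e_{j_i}$ coincide, so the heaviness of $X$ with respect to $e_{j_i}$ obtained in the previous step automatically promotes to superheaviness with respect to $e_{j_i}$, which is the desired conclusion.

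Given that every nontrivial input is already packaged in the two earlier statements, I do not anticipate any real obstacle beyond being careful with the bookkeeping: one must be sure that the $e_{j_i}$ produced by Theorem~\ref{thm-diff-idempotents}(iii) is genuinely a unity of one of the field summands $Q_{j_i}$ (so that Remark~\ref{rem-semisimplicity-genuine-qstate} applies), and this is exactly why the preliminary algebraic observation on the structure of idempotents in a semi-simple commutative algebra is essential to the argument.
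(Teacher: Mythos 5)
Your proposal is correct and follows exactly the route the paper itself indicates: the result is stated there as a direct consequence of Theorem~\ref{thm-diff-idempotents}(iii) together with Remark~\ref{rem-semisimplicity-genuine-qstate}, and your preliminary observation that idempotents in a direct sum of fields are sums of the unities $e_{j}$ is the same "easy to see" remark made in the statement. Nothing is missing.
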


\medskip
\noindent  The theorem yields the following geometric
characterization of non-semi\-sim\-pli\-city of $QH_{2n}(M)$.
Namely, define the {\it symplectic Torelli group} as the group of
all symplectomorphisms of $M$ which induce the identity map on
$H_\bullet (M; \cF)$. For instance, this group contains
$\Symp_0(M)$. Note that any element of the symplectic Torelli
group acts trivially on the quantum homology of $M$ and hence maps
sets (super)heavy with respect to an idempotent $a$ to sets
(super)heavy with respect to $a$.

Now Theorem~\ref{thm-quant-semis} readily implies the following

\begin{cor}
\label{cor-semisimple-Torelli} Assume that $(M,\omega)$ contains a
closed subset $X$ which is heavy with respect to a non-zero
idempotent and displaceable by a symplectomorphism from the
symplectic Torelli group. Then $QH_{2n}(M)$ is not semi-simple.

\end{cor}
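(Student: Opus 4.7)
The plan is to argue by contradiction, extracting from the semi-simplicity assumption an idempotent whose associated superheavy sets cannot be displaced by any Torelli symplectomorphism, and then producing an intersection contradicting the hypothesized displaceability of $X$.

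First, suppose for contradiction that $QH_{2n}(M)$ is semi-simple, so it splits as $Q_1 \oplus \ldots \oplus Q_d$ with field summands whose unities are $e_1,\ldots,e_d$. Let $a$ be the non-zero idempotent with respect to which $X$ is heavy. Any non-zero idempotent of such a product of fields is necessarily a sum of some of the $e_j$, so we may write $a = e_{j_1} + \ldots + e_{j_l}$. Applying Theorem~\ref{thm-quant-semis}, I obtain some index $i$ for which $X$ is superheavy with respect to $e_{j_i}$.

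Next, let $\phi \in \Symp(M)$ be an element of the symplectic Torelli group that displaces $X$, i.e.\ $\phi(X)\cap X = \emptyset$. Since $\phi$ acts trivially on $H_\bullet(M;\cF)$ and $QH_\bullet(M) = H_\bullet(M;\cF)\otimes_\cF \Lambda$, the induced action of $\phi$ on $QH_\bullet(M)$ is the identity; in particular $\phi_\ast e_{j_i} = e_{j_i}$. Consequently, by the $\Symp_0$-style invariance argument already noted in the paragraph preceding the corollary, the image $\phi(X)$ is again superheavy with respect to the same idempotent $e_{j_i}$.

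Finally, combine Theorem~\ref{thm-intro-heavy}(i) and (iii): since $\phi(X)$ is superheavy, it is heavy with respect to $e_{j_i}$, and any superheavy set (namely $X$) must intersect any heavy set with respect to the same idempotent. Therefore $\phi(X)\cap X \neq \emptyset$, contradicting the choice of $\phi$. This forces the initial semi-simplicity assumption to fail, proving the corollary. There is no substantive obstacle here beyond correctly invoking the preceding machinery; the only point that requires a moment's care is the elementary observation that idempotents of a semi-simple commutative algebra are precisely partial sums of the field unities, which is what converts the abstract $a$-heaviness into $e_{j_i}$-superheaviness via Theorem~\ref{thm-quant-semis}.
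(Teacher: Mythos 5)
Your proof is correct and follows essentially the same route as the paper: the paper likewise deduces the corollary from Theorem~\ref{thm-quant-semis} (which upgrades $a$-heaviness to $e_{j_i}$-superheaviness under semi-simplicity), the observation that Torelli symplectomorphisms act trivially on quantum homology and hence preserve (super)heaviness with respect to each $e_{j_i}$, and the intersection property of Theorem~\ref{thm-intro-heavy}(iii). No gaps.
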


 The
simplest examples are provided by sets of the form $X \times \{
\text{a\ meridian} \}$ in $M \times \T^2$ with a heavy $X$.

Another result in the same vein is as follows\footnote{In the case
$\cF=\C$, Theorem~\ref{thm-lagr-semismp-a} is conditional, see the
disclaimer in the previous section.}. Given a set $Y$ of positive
integers, put $\beta_Y(M) = \sum_{i \in Y} \beta_i(M)$, where
$\beta_i (M)$ stands for the $i$-th Betti number of $M$ over
$\cF$.

\begin{thm}
\label{thm-lagr-semismp-a} Assume that either of the following
(not mutually excluding) conditions holds:

\medskip
\noindent (a) $M$ contains $ m
> \beta_Y(M)+1$ pair-wise disjoint closed monotone  Lagrangian
submanifolds whose minimal Maslov numbers are greater than $n+1$
and belong to a set $Y$ of positive integers.

\medskip
\noindent (b) $M$ contains pair-wise disjoint homologically
non-trivial Lagrangian submanifolds\footnote{See
Example~\ref{exam-fund-class-as-Albers-element} for the
definition. As in that example we again assume that all our
Lagrangian submanifolds are closed, monotone and have minimal
Maslov number greater than $1$.} whose fundamental classes, viewed
as (non-zero) elements of \break $H_\bullet (M;\cF)$, are linearly
dependent over $\cF$.

\medskip
\noindent (In the case $\cF=\C$ assume that all the Lagrangian
submanifolds above are also relatively spin.)

\medskip
\noindent Then $QH_{2n}(M)$ is not semi-simple.
\end{thm}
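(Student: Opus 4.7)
The plan is to argue by contradiction: assume $QH_{2n}(M)$ is semi-simple and splits as $Q_1\oplus\cdots\oplus Q_d$ with idempotents $e_1,\ldots,e_d$ summing to $[M]$. Under this assumption I will assign to each $L_j$ a superheavy idempotent $e_{i_j}$ and then count.

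The common preliminary step is as follows. Each $L_j$ is heavy with respect to $[M]$: in case (a) the hypothesis $N_{L_j}>n+1$ makes the class of a point an Albers element (its degree $0$ exceeds $n+1-N_{L_j}$), so Theorem~\ref{thm-Albers-elem-implies-heavy} applies; in case (b) the fundamental class $[L_j]$ is an Albers element by Example~\ref{exam-fund-class-as-Albers-element}. Theorem~\ref{thm-quant-semis} then produces, for each $j$, an index $i_j$ with $L_j$ superheavy with respect to $e_{i_j}$. Since two sets superheavy with respect to the same idempotent must intersect (Theorem~\ref{thm-intro-heavy}(i),(iii)), the pairwise disjointness of the $L_j$ forces the $i_j$ to be pairwise distinct, so $d\geq m$.

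For case (b), I would show that $j([L_k])*e_{i_k}\neq 0$ while $j([L_j])*e_{i_k}=0$ for $j\neq k$: multiplying the hypothetical relation $\sum_j c_j\, j([L_j])=0$ by $e_{i_k}$ then collapses it to $c_k\, j([L_k])*e_{i_k}=0$, forcing $c_k=0$ for every $k$ and contradicting the non-triviality of the dependence. The non-vanishing should come from sharpening Theorem~\ref{thm-monot-superh} in the semi-simple setting: since $Q_{i_k}$ is a field whose only non-zero idempotent is $e_{i_k}$ itself, the Biran--Cornea quantum inclusion underlying Theorem~\ref{thm-monot-superh} should actually place $e_{i_k}$ inside $j([L_k])*QH_\bullet(M)$. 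For case (a), one upgrades $m\leq d$ to $m\leq\beta_Y(M)+1$ by attaching to each $L_j$ a class in $\bigoplus_{i\in Y}H_i(M;\cF)$ and arguing linear independence: the Biran--Cornea lift of the point class of $L_j$ has quantum corrections in degrees $N_{L_j}-1,\,2N_{L_j}-1,\ldots$, all inside $Y$, and these corrections should encode $e_{i_j}$ modulo a single ``leading'' contribution common to all $j$ (responsible for the $+1$).

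The principal obstacle in both parts is the same: converting abstract superheaviness with respect to $e_{i_j}$ into concrete algebraic data in $QH_\bullet(M)$ that can be counted. This amounts to a partial converse to Theorem~\ref{thm-monot-superh}, namely that under semi-simplicity, superheaviness of $L$ with respect to an idempotent $e$ implies $e\in j([L])*QH_\bullet(M)$. I expect such a converse to fall out of inspecting the Biran--Cornea quantum inclusion $QH_*(L_j)\hookrightarrow QH_*(M)$, exploiting the fact that the field factors of $QH_{2n}(M)$ have no proper non-zero ideals; making the degree bookkeeping in case (a) precise is where I anticipate the real work lies.
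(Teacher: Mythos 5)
There is a genuine gap. Your preliminary step is fine: each $L_j$ is heavy with respect to $[M]$, Theorem~\ref{thm-quant-semis} assigns to it a field idempotent $e_{i_j}$ with respect to which it is superheavy, and disjointness forces the $i_j$ to be pairwise distinct, giving $d\geq m$. But this inequality leads nowhere: in case (a) there is no a priori relation between the number $d$ of field factors and $\beta_Y(M)+1$ (indeed $d$ can be as large as the total rank of $QH_{2n}(M)$ over $\cK$), and in case (b) you still need to contradict the linear dependence. Everything you propose to bridge this — the vanishing $j([L_j])*e_{i_k}=0$ for $j\neq k$, the converse to Theorem~\ref{thm-monot-superh} asserting $e\in j([L])*QH_\bullet(M)$ from superheaviness, and the degree bookkeeping that is supposed to ``encode $e_{i_j}$'' in the quantum corrections — is conjectural, and you say so yourself. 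None of these claims is established, and the first one is not even plausible as stated: disjointness of Lagrangians gives no direct algebraic relation between their homology classes and each other's idempotents.

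The missing idea is to run the argument through the non-degenerate Frobenius pairing $\Pi$ rather than through ideals in the field factors. The paper proves (Theorem~\ref{thm-semis-gen}) that if $QH_{2n}(M)$ is semi-simple and the $L_i$ are pairwise disjoint, then the quantum inclusion images $Z_i=j^q(S_i)$ of their Albers elements are linearly independent over $\cK_{\Gamma'}$. The mechanism: a dependence $Z_1=\sum_{r\geq 2}\alpha_r Z_r$ plus non-degeneracy of $\Pi$ yields $T$ with $\Pi(T,Z_1)\neq 0$; writing $T=\sum_l e_l*T$ locates a field idempotent $e_l$ with $\Pi(e_l*T,Z_1)\neq 0$, and the dependence relation transports this to $\Pi(e_l*\alpha_rT,Z_r)\neq 0$ for some $r\geq 2$. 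The Poincar\'e-duality form of the Biran--Cornea estimate (inequality \eqref{eq-3-1}) then makes both $L_1$ and $L_r$ heavy, hence superheavy, with respect to $e_l$ — contradicting disjointness. With this in hand, case (b) is immediate since $j^q([L_i])=j([L_i])$ are dependent by hypothesis, and case (a) follows because Proposition~\ref{pro-BC}(i) together with $N_i>n+1$ (which kills all corrections of order $\geq 2$ in $w_i^{-1}$, as $H_{kN_i}(M;\cF)=0$ for $k\geq 2$) places all $m$ classes $Z_i$ in a subspace of dimension $\beta_Y(M)+1<m$, forcing a dependence. So the ``real work'' you anticipate in the degree bookkeeping is actually a two-line dimension count; the substance lies in the linear-independence statement, which your proposal does not supply.
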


\medskip
\noindent The proof is given in Section~\ref{sec-Lagr-proofs}.

\medskip
\noindent
\begin{exam}\label{rem-lagr-semisimp}
{\rm  For instance, if all the Lagrangian submanifolds from part
(a) of the theorem are simply connected, their minimal Maslov
numbers are equal to $2N$, so that the set $Y$ consists of one
element: $Y =\{2N\}$. Thus if $2N
> n+1$ and $QH_{2n}(M)$ is semi-simple, $M$ cannot contain more
than $\beta_{2N}(M)+1$ pair-wise disjoint simply-connected
Lagrangians (provided all of them are relatively spin if we work
with $\cF=\C$).}
\end{exam}

\begin{exam}
\label{exam-semisimplicity-Lagr-spheres} {\rm Set $\cF=\C$. Fix $n
\geq 11$ and an even number $d$ such that $6 \leq d < (n+3)/2$.
Consider a Fermat hypersurface of degree $d$
$$M = \{-z_0^d+z_1^d+\ldots+z_{n+1}^d = 0\} \subset \C P^{n+1}.$$
As we already saw in Example~\ref{exam-Fermat}, the manifold $L:=
M \cap \R P^{n+1}$ is an $n$-dimensional Lagrangian sphere.
Consider the images $f_{\alpha}(L)$, where symplectomorphisms
$f_{\alpha}$ are defined by \eqref{eq-fermat-alpha}. Note that, as
long as $\alpha_j/\beta_j \neq \pm 1$ for all $j$, the Lagrangian
spheres $f_\alpha (L)$ and $f_{\beta} (L)$ are disjoint. Using
this observation, it is easy to find $d/2$ disjoint Lagrangian
spheres in $M$.

The minimal Chern number $N$ of $M$ equals $n+2-d$, and so $2N$
lies in the interval $[n+2,2n-4]$. In this case $\beta_{2N}(M) =
1$ (see e.g. \cite{Hirzebruch}). Since $d/2 >2$, we conclude from
the previous example that $QH_{2n} (M)$ is not semi-simple. This
conclusion agrees with the computation of $QH_* (M)$ by Beauville
\cite{Beauville}.}
\end{exam}

\medskip
\noindent It would be interesting to find examples of symplectic
manifolds where the quantum homology is not known {\it a priori}
and where the above theorems are applicable. Let us mention that
different obstructions to the semi-simplicity of $QH_\bullet (M)$
coming from Lagrangian submanifolds were recently found by Biran
and Cornea \cite{Biran-Cornea}.

\subsection{Discussion and open questions}\label{subsec-disc}

\subsubsection{Strong displaceability beyond Floer theory?
}\label{subsubsec-pack}

Clearly, displaceability implies stable displaceability. The
converse is not true, as the next example shows:

\medskip
\noindent \begin{exam}\label{ex-disp-vs-stab}{\rm Consider the
complex projective space $\C P^n$ equipped with the Fubini-Study
symplectic form (in our normalization the area of a line equals
$1$). Identify $\C P^n$ with the symplectic cut of the Euclidean
ball $B(1) \subset \C^n$ (that is the boundary of $B(1)$ is
collapsed to $\C P^{n-1}$ along the fibers of the Hopf fibration,
see \cite{Lerman}), where $B(r):=\{\pi |z|^2 \leq r\}$. Then $B(r)
\subset \C P^n$ is:
\begin{itemize}
\item[{(i)}] displaceable for $r < 1/2$; \item[{(ii)}] strongly
non-displaceable but stably displaceable for $r \in [1/2, n/n+1)$;
\item[{(iii)}] strongly and stably non-displaceable for $r \geq
n/n+1$.
\end{itemize}
}\end{exam}

\medskip
\noindent It is instructive to analyze the techniques involved in
the proofs: The strong non-displaceability result in (ii) is an
immediate consequence of Gromov's packing-by-two-balls theorem,
which is proved via the $J$-holomorphic variant of the theorem which
states that there exists a $J$-holomorphic line in $\C P^n$ passing
through any two points. In the case (iii) the ball $B(r)$ contains
the Clifford torus, which is stably non-displaceable. This follows
either from the fact that the Clifford torus is a stem (see
\cite{BEP}), or from non-vanishing of its Lagrangian Floer homology
\cite{Cho}.

The displaceability of $B(r)$ in (i) follows from the explicit
construction of the two balls packing (see \cite{Karshon}). The
stable displaceability in (ii) is a direct consequence of
Theorem~\ref{thm-stab-disp} above: Indeed, consider the standard
$\T^n$-action on $\C P^n$. The normalized moment polytope $\Delta
\subset \R^n$ has the form $\Delta = \Delta_{stand} + w$ where
$\Delta_{stand}$ is the standard simplex $\{\rho_i \geq 0, \sum
\rho_i \leq 1\}$ in $\R^n$, where $(\rho_1,\ldots,\rho_n)$ denote
coordinates in $\R^n$, and $w = -\frac{1}{n+1}(1,\ldots,1)$. Note
that the ball $B(r)$ equals to $\Phi^{-1}(\Delta_r)$ where
$\Delta_r := r \cdot \Delta_{stand} +w$. Note that $\Delta_r$ does
not contain the origin exactly when $r \leq \frac{n}{n+1}$ which
yields the stable displaceability in (ii) above.

A mysterious feature of Example~\ref{ex-disp-vs-stab} is as
follows. On the one hand, we believe in the following general
empiric principle: whenever one can establish the
non-displaceability of a subset by means of the Floer homology
theory, one gets for free the stable non-displaceability. On the
other hand, we believe, following a philosophical explanation
provided by Biran, that Gromov's packing-by-two-balls theorem may
be extracted from some  ``operations" in Floer homology.
Example~\ref{ex-disp-vs-stab} shows that at least one of these
beliefs is wrong. It would be interesting to clarify this issue.

\subsubsection{Heavy fibers of Poisson-commutative subspaces}
\label{subsubsec-questions-Poisson-commut-subspaces}

It was shown in \cite{EP-qst} that for any finite-dimensional
Poisson-commutative subspace ${\mathbb A}\subset C^\infty (M)$ at least one of
the fibers of its moment map $\Phi$ has to be non-displaceable.

\medskip
\noindent {\bf Question.} Is it true that at least one fiber of
$\Phi$ has to be heavy (with respect to some non-zero idempotent
$a\in QH_* (M)$)?

\medskip
It is easy to construct an example of ${\mathbb A}$ whose moment
map $\Phi$ has no superheavy fibers: take $\T^2$ with the
coordinates $p,q \;{\rm mod}\ 1$ on it and take ${\mathbb A}$ to
be the set of all smooth functions depending only on $p$ -- the
corresponding $\Phi$ defines the fibration of $\T^2$ by meridians
none of which is superheavy.

Here is another question which concerns fibers of symplectic toric
manifolds, i.e. fibers of a moment map $\Phi$ of an effective
Hamiltonian $\T^n$-action on $(M^{2n}, \omega)$. Assume $M$ is
(spherically) monotone. Theorem~\ref{thm-main-nondispl-toric}
shows that in such a case the special fiber of $M$ is superheavy,
hence stably and strongly non-displaceable. In all the examples
where it has been checked this turns out to be the only
non-displaceable fiber of $M$.

\medskip
\noindent {\bf Question.} Is the special fiber for a monotone
symplectic toric $M$ always a stem? In particular, is it the only
non-displaceable fiber of the moment map?

\medskip

In the monotone case the special fiber is clearly the only heavy
fiber of the moment map, because it is superheavy and any other
heavy fiber would have had to intersect it. On the other hand, if
we consider a Hamiltonian $\T^k$-action on $M^{2n}$ with $k<n$
there can be more than one non-displaceable fiber of the moment
map -- for instance, because of purely topological obstructions:
the simplest Hamiltonian $\T^1$-action on $\C P^2$ provides such
an example. In the case of monotone symplectic toric manifolds of
dimension bigger than $4$ the question above is absolutely open.

After the first draft of this paper appeared, a remarkable
progress in this direction has been achieved in the works by Cho
\cite{Cho1} and Fukaya, Oh, Ohta and Ono \cite{FOOO-toric}: In
particular, it turns out that a non-monotone symplectic toric
manifold can have more than one non-displaceable fiber -- this
happens already for certain equivariant blowups of $\C P^2$.

\medskip
\noindent {\sc Organization of the paper:}

 In Section~\ref{sec-detect-disp} we prove Theorem~\ref{thm-stab-disp} which in
particular states that the special fiber of a compressible torus
action is a stable stem.

In Section~\ref{sec-prelim} we sum up various preliminaries from
Floer theory including basic properties of spectral invariants and
partial symplectic quasi-states. In addition we spell out a useful
property of the Conley-Zehnder index: it is a quasi-morphism on the
universal cover of the symplectic group (see
Proposition~\ref{prop-CZ-qmm}). For completeness we extract a proof
of this property from \cite{Rob-Sal}; alternatively, one can use the
results of \cite{DeGossons}.

In Section~\ref{sec-basic-proofs} we prove parts (i) and (iii) of
Theorem~\ref{thm-intro-heavy} and
Theorem~\ref{thm-diff-idempotents} on basic properties of
(super)heavy sets.

In Section~\ref{sec-proofs-prod} we prove Theorem~\ref{thm-products}
on products of (super)heavy sets. Our approach is based on a quite
general product formula for spectral invariants
(Theorem~\ref{thm-spectral-sum}), which is proved by a fairly
lengthy algebraic argument.

In Section~\ref{sec-stab-nondispl-heavy-sets} we prove
Theorem~\ref{thm-intro-heavy} (ii) on stable
non-dis\-pla\-ce\-abi\-lity of heavy subsets. The argument
involves a  ``baby version" of the above-men\-tio\-ned product
formula.

In Section~\ref{sec-anal-stable-stems} we prove superheaviness of
stable stems.

In Section~\ref{sec-Lagr-proofs} we bring together the proofs of
various results related to (super)heaviness of monotone Lagrangian
submanifolds satisfying the Albers condition, including
Theorems~\ref{thm-Albers-elem-implies-heavy},
\ref{thm-monot-superh}, \ref{thm-lagr-semismp-a} and
Proposition~\ref{prop-T2n-blown-up-at-one-point}.

In Section~\ref{sect-pf-thm-part-qstate} we prove
Theorem~\ref{thm-main-nondispl-toric} on superheaviness of special
fibers of Hamiltonian torus actions on monotone symplectic
manifolds. The proof is quite involved. In fact, two tricks
enabled us to shorten our original argument: First, we use the
Fourier transform on the space of rapidly decaying functions on
the Lie coalgebra of the torus in order to reduce the problem to
the case of Hamiltonian circle actions. Second, we systematically
use the quasi-morphism property of the Conley-Zehnder index for
asymptotic calculations with Hamiltonian spectral invariants.
Finally, in Section~\ref{sec-pf-thm-Calabi-qmm-Ham-loops} we prove
Theorem~\ref{thm-Calabi-qmm-Ham-loops}.

Figure 1 sums up the hierarchy of the non-displaceability
properties discussed above.

\begin{figure}
\label{fig-hierarchy}
\begin{center}
\includegraphics[width=5.4in]{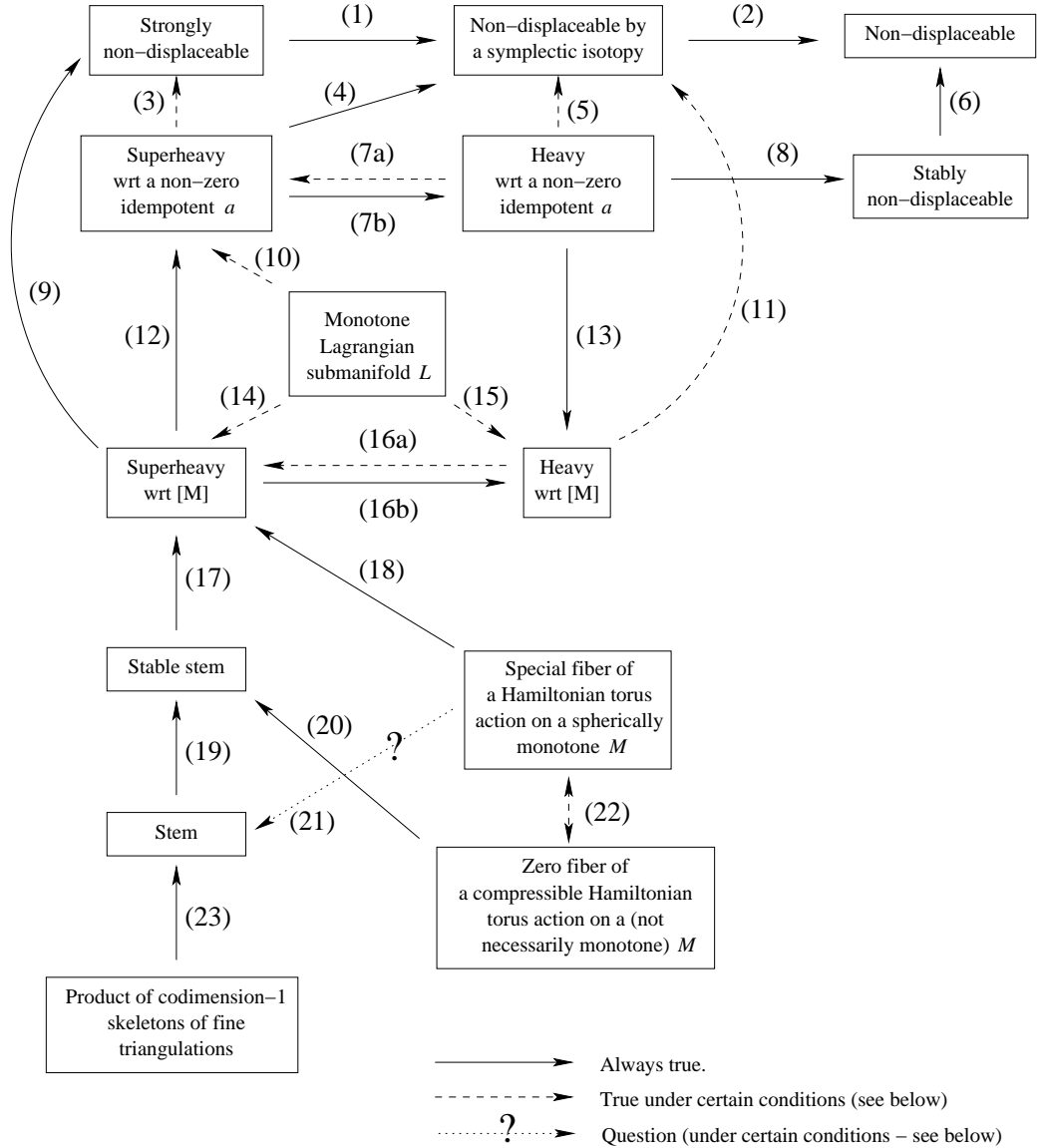}
\end{center}
\caption{Hierarchy of non-displaceability properties}
\end{figure}
{

\vfil\eject

\noindent (1),(2),(6),(19) - Trivial.

\noindent (3) True if $a$ is invariant under the action of the
whole group $\Symp\, (M)$ -- Theorem~\ref{thm-intro-heavy}, part
(iii).

\noindent (4), (9) Theorem~\ref{thm-intro-heavy}, part (iii).

\noindent (5) True if the algebra $QH_{2n} (M)$ is semi-simple
 -- see
Corollary~\ref{cor-semisimple-Torelli}.

\noindent (7a) True if the algebra $QH_{2n} (M)$ splits, as an
algebra, into a direct sum of two algebras, at least one of which
is a field, and $a$ is the unity element in that field  -- see
Remark~\ref{rem-semisimplicity-genuine-qstate}.

\noindent (7b), (16b) Theorem~\ref{thm-intro-heavy}, part (i).

\noindent (8) Theorem~\ref{thm-intro-heavy}, part (ii).

\noindent (10) Theorem~\ref{thm-monot-superh} (see the assumptions
on $L$ there).

\noindent (11) True if the algebra $QH_{2n} (M)$ is semi-simple
 -- see
Corollary~\ref{cor-semisimple-Torelli}.

\noindent (12) Theorem~\ref{thm-diff-idempotents}, part (i).

\noindent (13) Theorem~\ref{thm-diff-idempotents}, part (ii).

\noindent (14) Theorem~\ref{thm-monot-superh}  (see the
assumptions on $L$ there) with $a=[M]$ -- i.e. $j(L)$ is
invertible in $QH_\bullet (M)$.

\noindent (15) $L$ satisfies the Albers condition -- see
Theorem~\ref{thm-Albers-elem-implies-heavy}.

\noindent (16a) True if $QH_{2n} (M)$ is a field -- see
Remark~\ref{rem-semisimplicity-genuine-qstate}.

\noindent (17) Theorem~\ref{thm-stem}.

\noindent (18) Theorem~\ref{thm-main-nondispl-toric}.

\noindent (20) Theorem~\ref{thm-stab-disp}.

\noindent (21) Is the special fiber for a monotone {\it symplectic
toric} $M$ always a stem? See
Section~\ref{subsubsec-questions-Poisson-commut-subspaces}.

\noindent (22) True if $M$ is spherically monotone and the torus
action is compressible -- see Remark~\ref{rem-spec-fiber-2}.

\noindent (23) See \cite{EP-qst}. }

\section{Detecting stable displaceability} \label{sec-detect-disp}

For detecting stable displaceability of a subset of a symplectic
manifold we shall use the following result (cf. \cite[Chapter
6]{Pol-book}).

\begin{thm}\label{thm-stab-nondisp} Let $X$ be a closed subset of
a closed symplectic manifold $(M,\omega)$. Assume that there
exists a contractible loop of Hamiltonian diffeomorphisms of
$(M,\omega)$ generated by a normalized time-periodic Hamiltonian
$H_t(x)$ so that $H_t(x) \neq 0$ for all $t \in [0,1]$ and $x \in
X$. Then $X$ is stably displaceable.
\end{thm}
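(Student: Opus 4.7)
First, I would reduce to the case where $H_t(x) > 0$ on $X$. Since $X$ is closed (hence compact, as $M$ is closed) and $H_t(x)$ is continuous and nowhere zero on the compact set $[0,1]\times X$, one has $|H_t(x)|\geq c > 0$ there. For each fixed $x\in X$ the continuous map $t\mapsto H_t(x)$ on the circle $\SP^1$ is nowhere zero, hence of constant sign; by continuity in $x$ this sign is locally constant, so $X$ decomposes as a disjoint union $X^+\sqcup X^-$ of closed subsets on which $H_t$ is uniformly positive, respectively negative. Since $-H_t$ also generates a contractible Hamiltonian loop (namely the reverse loop $\{\phi_{1-t}^H\}$) and changes signs, it suffices to treat one piece at a time, so we may assume $H_t(x)\geq c > 0$ for all $(t,x)\in[0,1]\times X$.

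Next, I would construct a compactly-supported Hamiltonian $K$ on $M\times T^*\SP^1$ whose time-$1$ flow $\psi_K^1$ satisfies $\psi_K^1(X\times\{r=0\})\cap(X\times\{r=0\})=\emptyset$. The first natural guess --- $K(x,r,\theta)=H_\theta(x)+r$, truncated to ensure compact support in $r$ --- generates (before truncation) the product of our loop $\{\phi_t\}$ with the canonical rotation $\{R_t\}$ on the $\SP^1$-factor, which is itself a contractible Hamiltonian loop, so its time-$1$ map is the identity. Explicitly, along the orbit $(x(s),r(s),\theta(s))$ starting at $(x_0,0,\theta_0)$ one has $\theta(s)=\theta_0+s$, $x(s)=\phi_{\theta_0+s}\phi_{\theta_0}^{-1}(x_0)$, and
\[
r(1)\;=\;-\int_0^1 \partial_\theta H(x(s),\theta_0+s)\,ds\;=\;-\bigl[H(x(1),\theta_0+1)-H(x_0,\theta_0)\bigr]\;=\;0,
\]
using that $d_x H\cdot X_{H_\theta}=0$, $\phi_1=\mathrm{id}$, and the $1$-periodicity of $H$ in $t$. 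The plan is therefore to break this cancellation by exploiting the null-homotopy of $\{\phi_t\}$. Fix a smooth two-parameter family $\{\phi_t^s\}_{s,t\in[0,1]}$ of Hamiltonian diffeomorphisms with $\phi_t^0\equiv\mathrm{id}$, $\phi_t^1=\phi_t$, and $\phi_0^s=\phi_1^s=\mathrm{id}$ for all $s$, generated by normalized Hamiltonians $H_t^s$, and consider a Hamiltonian of the schematic form
\[
K(x,r,\theta)\;=\;\chi(r)\,H_\theta^{\eta(r)}(x)\;+\;g(r),
\]
where $\chi,\eta:\R\to[0,1]$ are smooth bump functions with $\chi(0)=\eta(0)=1$ supported in a compact neighborhood of $r=0$, and $g$ is an auxiliary compactly supported function with $g'(0)=1$. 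This $K$ is compactly supported in $r$ and coincides with $H_\theta(x)+g(r)$ on a neighborhood of $\{r=0\}$, but the $r$-dependence of $\eta$ introduces an extra term proportional to $\partial_s H_t^s\cdot\eta'(r)$ in the $\dot\theta$ equation that survives the integration by parts precisely because the path $s\mapsto\phi_t^s$ is a genuine homotopy from the constant loop to $\{\phi_t\}$.

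Finally, I would verify that for $(x_0,\theta_0)\in X\times\SP^1$, a careful computation of the time-$1$ flow of $K$ yields an $r$-coordinate equal to a strictly positive multiple of $\int_0^1 H_t(x_0)\,dt\geq c>0$, uniformly in $(x_0,\theta_0)$. This forces the image to lie in $M\times\{r\neq 0\}$, disjoint from $X\times\{r=0\}$, and completes the proof.

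The principal obstacle is that the naive calculation produces an exact zero by the loop cancellation; one must engineer the cutoffs $\chi,\eta,g$ so that the correction term generated by the null-homotopy picks up a \emph{uniformly positive} multiple of the time-average of $H$ (rather than something of sign depending on $\theta_0$, which a purely $\theta$-periodic perturbation would give). The use of the null-homotopy is essential here: without it, no choice of $K$ of this kind could break the periodicity-induced cancellation.
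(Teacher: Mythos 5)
Your proposal correctly identifies the two essential points (the naive product Hamiltonian $H_\theta(x)+r$ gives an exact cancellation, and the null-homotopy must be used to break it), but the proof has a genuine gap at its central step: the claim that the time-$1$ flow of $K(x,r,\theta)=\chi(r)H^{\eta(r)}_\theta(x)+g(r)$ sends $(x_0,0,\theta_0)$ to a point whose $r$-coordinate is a uniformly positive multiple of $\int_0^1 H_t(x_0)\,dt$ is asserted, not derived, and it is far from clear that any choice of $\chi,\eta,g$ delivers it. If $\chi$ and $\eta$ are identically $1$ near $r=0$ (the usual meaning of ``bump function''), then on that neighborhood the flow of $K$ is exactly the flow of $H_\theta(x)+g(r)$, and an orbit that stays there returns to $r=0$ by your own cancellation computation; the $\eta$-correction only acts where $\eta'\neq 0$. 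If instead $\eta'(0)\neq 0$, the extra term lands in the $\dot\theta$-equation, not in $\dot r=-\chi(r)\,\partial_\theta H^{\eta(r)}_\theta(x)$, and the coupled ODE for $(x,r,\theta)$ no longer admits the closed-form integration you performed in the naive case; nothing in the sketch controls the sign, let alone the value, of $r(1)$. A second, smaller flaw: the reduction to $H>0$ via $X=X^+\sqcup X^-$ is invalid, since stable displaceability of each piece separately does not yield stable displaceability of the union (displaceability is not closed under unions). This error happens to be harmless because the reduction is unnecessary: the constant-sign observation already gives $\int_0^1 H_t(x)\,dt\neq 0$ for every $x\in X$, which is all one needs.

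The paper's proof sidesteps the flow computation entirely by using the homotopy parameter of the null-homotopy, rather than time, as the isotopy parameter. With $h^{(s)}_t$ a homotopy from the constant loop to $h_t$, generated by normalized Hamiltonians $H^{(s)}$, one sets
$$\Psi_s(x,r,\theta)=\bigl(h^{(s)}_\theta x,\ r-H^{(s)}(h^{(s)}_\theta x,\theta),\ \theta\bigr),$$
checks directly that $s\mapsto\Psi_s$ is a Hamiltonian isotopy of $M\times T^*\SP^1$ starting at the identity, and observes that if $\Psi_1(x,0,\theta)\in X\times\{r=0\}$ then $h_\theta x\in X$ and $H_\theta(h_\theta x)=0$, contradicting the hypothesis. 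No cutoff engineering and no ODE analysis is needed (only a routine truncation at the end to make the isotopy compactly supported). I recommend you replace your construction with this suspension map; your current argument cannot be completed as written without actually carrying out, and most likely substantially modifying, the computation you defer.
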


\begin{proof}
Denote by $h_t$ the Hamiltonian loop generated by $H$. Let
$h^{(s)}_t$ be its homotopy to the constant loop: $h^{(1)}_t= h_t$
and $h^{(0)}_t=\id$. Write $H^{(s)}(x,t)$ for the corresponding
normalized Hamiltonians. Consider the family of diffeomorphisms
$\Psi_s$ of $M\times T^*\SP^1$ given by
$$\Psi_s(x,r,\theta) = (h^{(s)}_{\theta}x,
r-H^{(s)}(h^{(s)}_{\theta}x,\theta),\theta)\;.$$ One readily
checks that $\Psi_s, s \in [0,1]$, is a Hamiltonian isotopy (not
compactly supported). We claim that $\Psi_1$ displaces $Y:=X
\times \{r=0\}$. Indeed, if $\Psi_1(x,0,\theta) \in Y$ we have
$h_\theta x \in X$ and $H_{\theta}(h_\theta x)=0$ which
contradicts the assumption of the theorem. This completes the
proof.
\end{proof}

\medskip
\noindent {\bf Proof of Theorem~\ref{thm-stab-disp}:} Choose a
linear functional $F: \R^k \to \R$ with rational coefficients
which is strictly positive on $Y$. Then for some sufficiently
large positive integer $N$ the Hamiltonian $H:= N\Phi^*F$
generates a contractible Hamiltonian circle action on $M$ and $H$
is strictly positive on $X:=\Phi^{-1}(Y)$. Thus $X$ is stably
displaceable in view of the previous theorem. \qed

\section{Preliminaries on Hamiltonian Floer theory}
\label{sec-prelim}

\subsection{Valuation on $QH_*  (M)$}\label{subsec-valuation}

Define a function $\nu: \cK \to \Gamma$ by
$$\nu(\sum z_\theta s^\theta)=
\max\{\ \theta \;{|}\; z_\theta \neq 0\}\;.$$ The convention is
that $\nu(0)=-\infty$.  In algebraic terms, $\exp \nu$ is a
non-Archimedean absolute value on $\cK$.

The function $\nu$ admits a natural extension to $\Lambda$ and
then to $QH_*  (M)$ -- abusing the notation we will denote all of
them by $\nu$. Namely, any element of $\lambda\in\Lambda$ can be
uniquely represented as $\lambda = \sum_\theta u_\theta s^\theta$,
where each $u_\theta$ belongs to $\cF[q,q^{-1}]$, and any non-zero
$a\in QH_*  (M)$ can be uniquely represented as $a = \sum_i
\lambda_i b_i$, $0\neq \lambda_i\in \Lambda$, $0\neq b_i\in H_*
(M;\cF)$. Define
$$\nu (\lambda) :=
\max \big\{ \theta \ |\ u_\theta\neq 0\big\},$$
$$\nu (a) := \max_i \nu(\lambda_i).$$

\subsection{Hamiltonian Floer theory}
\label{sec-Floer-theory-basics}

We briefly recall the notation and conventions for the setup of
the Hamiltonian Floer theory that will be used in the proofs.

Let $\cL$ be the space of all smooth contractible loops $\gamma:
\SP^1 = \R/\Z \to M$. We will view such a $\gamma$ as a 1-periodic
map $\gamma: \R\to M$. Let $\D^2$ be the standard unit disk in
$\R^2$. Consider a covering $\tcL$ of $\cL$ whose elements are
equivalence classes of pairs $(\gamma,u)$, where $\gamma \in \cL$,
$u: \D^2\to M$, $\left. u\right|_{\partial \D^2} = \gamma$ (i.e.
$u (e^{2\pi \sqrt{-1} t}) = \gamma (t)$), is a (piecewise smooth)
disk spanning $\gamma$ in $M$ and the equivalence relation is
defined as follows: $(\gamma_1,u_1) \sim (\gamma_2,u_2)$ if and
only if $\gamma_1 = \gamma_2$ and the 2-sphere $u_1 \# (-u_2)$
vanishes in $H_2^S (M)$. The equivalence class of a pair $(\gamma,
u)$ will be denoted by $[\gamma,u]$. The group of deck
transformations of the covering $\tcL\to\cL$ can be naturally
identified with $H_2^S (M)$. An element $A \in H_2^S (M)$ acts by
the transformation
\begin{equation}
\label{act1} A ([\gamma,u]) = [\gamma,u \# (-A)].
\end{equation}

Let $F: M\times [0,1]\to\R$ be a Hamiltonian function (which is
time-periodic as we always assume). Set $F_t := F (\cdot, t)$. We
will denote by $f_t$ the Hamiltonian flow generated by $F$,
meaning the flow of the time-dependent Hamiltonian vector field
$X_t$ defined by the formula
\[
\omega (\cdot, X_t) = dF_t (\cdot)\ \ \forall t.
\]
(Note our sign convention!)

Let $\cP_F \subset \cL$ be the set of all contractible 1-periodic
orbits of the Hamiltonian flow generated by $F$, i.e. the set of
all $\gamma\in \cL$ such that $\gamma (t) = f_t (\gamma (0))$.
Denote by $\tP_F$ the full lift of $\cP_F$ to $\tcL$.

Denote by $\Fix (F)$ the set of those fixed points of $f$ that are
endpoints of contractible periodic orbits of the flow:
\[
\Fix (F) := \{ x\in M \ | \ \exists \gamma\in \cP_F,\ \ x=\gamma
(0)\}.
\]
We say that $F$ is {\it regular} if for any $x\in \Fix (F)$ the
map $d_x f : T_x M\to T_x M$ does not have eigenvalue $1$.

Recall that the {\it action functional} is defined on $\tcL$ by
the formula:
$$\cA_F ([\gamma, u]) = \int_0^1 F(\gamma (t), t) dt - \int_{\D^2}
u^*\omega.$$ Note that
\begin{equation}
\label{act2} \cA_F(Ay) =\cA_F(y) + \omega (A)
\end{equation}
for all $y \in \tcL$ and $A \in H_2^S (M)$.

For a regular Hamiltonian $F$ define a vector space $C (F)$ over
$\cF$ as the set of all formal sums
$$\sum_{i=1}^k  \lambda_i y_i,\, \lambda_i \in \Lambda, y_i\in \tP_F,$$
modulo the relations
$$Ay = s^{-\omega (A)} q^{-c_1 (A)} y, $$
for all $y\in\tP_F, A\in H_2^S (M)$.
The grading on $\Lambda$ together with the Conley-Zehnder index on elements of
$\tP_F$ (see Section~\ref{subsubsec-CZ-Maslov}) defines a $\Z$-grading on $C (F)$. We will denote
the $i$-th graded component by $C_i (F)$.

Given a loop $\{J_t\},\; t\in \SP^1$, of $\omega$-compatible
almost complex structures, define a Riemannian metric on $\cL$ by
$$(\xi_1,\xi_2) = \int_0^1 \omega (\xi_1 (t), J_t\xi_2(t))dt, $$
where $\xi_1,\xi_2 \in T_\gamma \cL$. Lift this metric to $\tcL$
and consider the negative gradient flow of the action functional
$\cA_F$. For a generic choice of the Hamiltonian $F$ and the loop
$\{J_t\}$ (such a pair $(F,J)$ is called {\it regular}) the count
of isolated gradient trajectories connecting critical points of
$\cA_F$ gives rise in the standard way \cite{Floer}, \cite{Ho-Sa},
\cite{Sal} to a Morse-type differential
\begin{equation}
\label{eqn-D-def}
d: C (F) \to C (F),\; d^2 =0.
\end{equation}
The differential $d$ is $\Lambda$-linear and has the graded degree
$-1$. It strictly decreases the action. The homology, defined by
$d$, is called the {\it Floer homology} and will be denoted by
$HF_* (F,J)$. It is a $\Lambda$-module. Different choices of a
regular pair $(F,J)$ lead to natural isomorphisms between the
Floer homology groups.

The following proposition summarizes a few basic algebraic
properties of Floer complexes and Floer homology that will be
important for us further. The proof is straightforward and we omit
it.

\begin{prop}
\label{prop-basic-alg-properties-of-Floer-complex-Floer-homology}
\

\medskip
1) Each $C_i (F)$ and each $HF_i (F,J)$, $i\in\Z$, is a
finite-dimensional vector space over $\cK$.

\medskip
2) Multiplication by $q$ defines isomorphisms $C_i (F)\to C_{i+2} (F)$
and \break $HF_i (F,J)\to HF_{i+2} (F,J)$ of $\cK$-vector spaces.

\medskip
3) For each $i\in\Z$ there exists a basis of $C_i (F)$ over $\cK$
consisting of the elements of the form $q^l [\gamma, u]$, with
$[\gamma, u]\in \tP_F$.

\medskip
4) A finite collection of elements of the form $q^l [\gamma, u]$,
$[\gamma, u]\in \tP_F$, lying in $C_0 (F) \cup C_1 (F)$ is a basis
of the vector space $C_0 (F)\oplus C_1 (F)$ over the field $\cK$
if and only if it is a basis of the module $C(F)$ over the ring
$\Lambda$.

\end{prop}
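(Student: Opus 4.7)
The plan is to exploit the free $\Lambda$-module structure of $C(F)$. The central observation is that, for a regular Hamiltonian $F$ on a closed manifold, the set $\cP_F$ is finite (being in bijection with a subset of $\Fix(F)$, which is finite by non-degeneracy), and a choice of one lift $y_\gamma=[\gamma,u_\gamma]\in\tP_F$ for each $\gamma\in\cP_F$ identifies $C(F)$ with the free $\Lambda$-module $\bigoplus_{\gamma\in\cP_F}\Lambda\cdot y_\gamma$ of rank $|\cP_F|$. Indeed, every other lift of $\gamma$ has the form $Ay_\gamma$ for some $A\in H_2^S(M)$, and the defining relation $Ay=s^{-\omega(A)}q^{-c_1(A)}y$ identifies $Ay_\gamma$ with $s^{-\omega(A)}q^{-c_1(A)}y_\gamma\in\Lambda\cdot y_\gamma$; because these relations are fibrewise (they never mix lifts of distinct orbits) and the $H_2^S(M)$-action on each fiber of $\tP_F\to\cP_F$ is free, no further identifications occur.

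From this, parts (1)--(3) follow directly. Writing $\mu_\gamma$ for the Conley--Zehnder index of $y_\gamma$, note that its parity is an invariant of $\gamma$ alone, since changing lifts shifts $\mu$ by $2c_1(A)$. Because $\deg s=0$ and $\deg q=2$, each rank-one summand $\Lambda\cdot y_\gamma$ has $\cK$-basis $\{q^m y_\gamma : m\in\Z\}$, with $q^m y_\gamma$ contributing to $C_i(F)$ exactly when $i=\mu_\gamma+2m$. Summing over $\gamma\in\cP_F$ exhibits a finite $\cK$-basis of $C_i(F)$ consisting of elements $q^l[\gamma,u]$ as required in (3), and gives the finite-dimensionality claim of (1). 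Part (2) is then immediate: multiplication by $q$ is a $\cK$-linear isomorphism raising degree by $2$ on $C(F)$, and since the differential $d$ is $\Lambda$-linear it commutes with $q$ and descends to an isomorphism $HF_i\to HF_{i+2}$. Finite-dimensionality of $HF_i$ follows because it is a subquotient of $C_i$ as a $\cK$-vector space.

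For (4), the key observation is that any element $q^l[\gamma,v]$ equals $\lambda\cdot y_\gamma$ for some unit $\lambda=s^\theta q^k\in\Lambda^{\times}$, since $[\gamma,v]$ and $y_\gamma$ differ by the action of some $A\in H_2^S(M)$. Consequently, for a collection $\{e_j=q^{l_j}[\gamma_j,v_j]\}\subset C_0(F)\cup C_1(F)$, the $\Lambda$-submodule spanned by $\{e_j\}$ coincides with the one spanned by $\{y_{\gamma_j}\}$; by the freeness established in the first paragraph, this equals $C(F)$ (and the $e_j$ are automatically $\Lambda$-independent) if and only if $j\mapsto\gamma_j$ is a bijection onto $\cP_F$. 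On the $\cK$-side, for each $\gamma\in\cP_F$ the intersection $(\Lambda\cdot y_\gamma)\cap(C_0(F)\oplus C_1(F))$ is the $1$-dimensional $\cK$-subspace $\cK\cdot q^{m_\gamma}y_\gamma$, where $m_\gamma$ is the unique integer placing $q^{m_\gamma}y_\gamma$ in degree $0$ or $1$; so $\{e_j\}$ is a $\cK$-basis of $C_0(F)\oplus C_1(F)$ precisely under the same bijectivity condition, which yields the equivalence. The only step calling for real care is the freeness of $C(F)$ as a $\Lambda$-module in the first paragraph; everything else is direct bookkeeping with the two gradings on $\Lambda$ and their interaction with the Conley--Zehnder index.
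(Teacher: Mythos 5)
Your proof is correct; the paper itself omits the argument, stating only that "the proof is straightforward," and what you supply is precisely the intended bookkeeping: the identification of $C(F)$ with the free $\Lambda$-module on one chosen lift per orbit (using that the fiber of $\tP_F\to\cP_F$ is an $H_2^S(M)$-torsor and that $A\mapsto s^{-\omega(A)}q^{-c_1(A)}$ is a homomorphism into the units of $\Lambda$), followed by the interaction of the two gradings with the Conley--Zehnder parity. All four parts, including the unit-multiple reduction in (4), check out.
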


\subsection{Conley-Zehnder and Maslov indices}
\label{subsubsec-CZ-Maslov}

In this section we briefly outline the definition and recall the
relevant properties of the Conley-Zehnder index referring to
\cite{Rob-Sal, Sal, Sal-Ze} for details. In particular, we show
that {\it the Conley-Zehnder index is a quasi-morphism on the
universal cover $\widetilde{Sp\, (2k)}$ of the symplectic group
$Sp(2k)$} (see Proposition~\ref{prop-CZ-qmm} below), a fact which
will be useful for asymptotic calculations with Floer homology in
the next sections. There are several routes leading to this fact,
which is quite natural since all homogeneous quasi-morphisms on
$\widetilde{Sp\, (2k)}$ are proportional, and hence the same
quasi-morphism admits quite dissimilar definitions
\cite{Barge-Ghys}. We extract the quasi-morphism property from the
paper of Robbin and Salamon \cite{Rob-Sal} by bringing together
several statements contained therein\footnote{We thank V.L.
Ginzburg for stimulating discussions on the material of this
section.}.

The  Conley-Zehnder index assigns to each $[\gamma, u]\in \tP_F$ a
number. Originally the Conley-Zehnder index was defined only for
regular Hamiltonians \cite{Co-Ze} -- in this case it is
integer-valued and gives rise to a grading of the homology groups in
Floer theory. Later the definition was extended in different ways by
different authors to arbitrary Hamiltonians. We will use such an
extension introduced in \cite{Rob-Sal} (also see \cite{Sal-Ze,
Sal}). In this case the Conley-Zehnder index may take also
half-integer values.

Let $k$ be a natural number. Consider the symplectic vector space
$\R^{2k}$ with a symplectic form $\omega_{2k}$ on it. Denote by
$p=(p_1,\ldots,p_k),q=(q_1,\ldots, q_k)$ the corresponding Darboux
coordinates on the vector space $\R^{2k}$.

\medskip \noindent {\sc Robbin-Salamon index of Lagrangian paths:}
Let $V\subset \R^{2k}$ be a Lagrangian subspace. Consider the
Grassmannian ${\it Lagr}\, (k)$ of all Lagrangian subspaces in
$\R^{2k}$ and consider the hypersurface $\Sigma_V\subset {\it
Lagr}\, (k)$ formed by all the Lagrangian subspaces that are {\it
not} transversal to $V$. To such a $V$ and to any smooth path $\{
L_t \}$, $0\leq t\leq 1$, in ${\it Lagr}\, (k)$ Robbin and Salamon
\cite{Rob-Sal} associate an index, which may take integer or
half-integer values and which we will denote by $RS (\{ L_t\}, V)$.
The definition of the index can be outlined as follows.

A number $t\in [0,1]$ is called a {\it crossing} if $L_t
\in\Sigma_V$. To each crossing $t$ one associates a certain
quadratic form $Q_t$ on the space $L(t)\cap V$ -- see
\cite{Rob-Sal} for the precise definition. The crossing $t$ is
called {\it regular} if the quadratic form $Q_t$ is
non-degenerate. The {\it index} of such a regular crossing $t$ is
defined as the signature of $Q_t$ if $0<t<1$ and as half of the
signature of $Q_t$ if $t=0,1$. One can show that regular crossings
are isolated. For a path $\{ L_t \}$ with only regular crossings
the index $RS (\{ L_t \}, V)$ is defined as the sum of the indices
of its crossings. An arbitrary path can be perturbed, keeping the
endpoints fixed, into a path with only regular crossings and the
index of the perturbed path does not depend on the perturbation --
in fact, it depends only on the fixed endpoints homotopy class of
the path. Moreover, it is additive with respect to the
concatenation of paths and satisfies the naturality property: $RS
(\{AL_t\}, AV) = RS (\{ L_t\}, V)$ for any symplectic matrix $A$.

\medskip\noindent {\sc Indices of paths in $Sp\, (2k)$:}
Consider the group $Sp\, (2k)$ of symplectic $2k\times 2k$-matrices.
Denote by $\widetilde{Sp\, (2k)}$ its universal cover. One can use
the index $RS$ in order to define two indices on the space of smooth
paths in $Sp\, (2k)$.

The first index, denoted by $Ind_{2k}$, is defined as follows. Fix a
Lagrangian subspace $V\subset \R^{2k}$. For each smooth path $\{ A_t
\}$, $0\leq t\leq 1$, in $Sp\, (2k)$ define $Ind_{2k}\, (\{ A_t \},
V)$ as
$$Ind_{2k}\, (\{ A_t \}, V) := RS (\{ A_t V\}, V).$$
The naturality of the $RS$ index implies that
$$RS (\{ B A_t B^{-1} (BV)\}, BV) =
RS (\{ B A_t V)\}, BV) = $$
$$= RS (\{ A_t V)\}, V)
\ {\rm for\ any}\ B\in Sp\, (2k)$$ and thus we get the following
naturality condition for $Ind_{2k}$:
\begin{equation}
\label{eqn-naturality-Ind} Ind_{2k}\, (\{BA_t B^{-1}\}, BV) =
Ind_{2k}\, (\{ A_t \}, V)\ {\rm for\ any}\ B\in Sp\, (2k).
\end{equation}

The second index, which we will call the {\it Conley-Zehnder index
of a matrix path} and which will be denoted by $CZ_{matr}$, is
defined as follows. For each $A\in Sp\, (2k)$ denote by $Gr\, A$ the
graph of $A$ which is a Lagrangian subspace of the symplectic vector
space $\R^{4k} = \R^{2k}\times \R^{2k}$ equipped with the symplectic
structure $\omega_{4k} = -\omega_{2k}\oplus \omega_{2k}$. Denote by
$\Delta$ the diagonal in $\R^{4k} = \R^{2k}\times \R^{2k}$ -- it is
a Lagrangian subspace with respect to $\omega_{4k}$. Now for any
smooth path $\{ A_t \}$, $0\leq t\leq 1$, in $Sp\, (2k)$ define
$CZ_{matr}$ as
$$CZ_{matr} (\{ A_t\}) := RS (\{ Gr\, A_t\}, \Delta).$$
Equivalently, one can define $CZ_{matr} (\{ A_t\})$ similarly to
the index $RS$ by looking at the intersections of $\{ A(t)\}$ with
the hypersurface $\Sigma\subset Sp\, (2k)$ formed by all the
symplectic $2k\times 2k$-matrices with eigenvalue $1$ and
translating the notions of a regular crossing and the
corresponding quadratic form to this setup.

Both indices $Ind_{2k}\, (\{ A_t \}, V)$ and $CZ_{matr} (\{ A_t\})$
depend only on the fixed endpoints homotopy class of the path $\{
A_t \}$ and are additive with respect to the concatenation of paths
in $Sp\, (2k)$. The relation between the two indices is as follows.
Denote by $I_{2k}$ the $2k\times 2k$ identity matrix. Given a smooth
path $\{ A_t \}$, $0\leq t\leq 1$, in $Sp\, (2k)$, set
$\widehat{A}_t := I_{2k} \oplus A_t  \in Sp\, (4k)$. Then
\begin{equation}
\label{eqn-CZ-Ind-4n} CZ_{matr} (\{ A_t\}) = Ind_{4k} (\{
\widehat{A}_t \}, \Delta).
\end{equation}

\bigskip
\begin{rem}
\label{rem-Ind-CZ-matr-bounded-change-under-perturbations} {\rm Note
that near each $W\in\Sigma_V$ there exists a local coordinate chart
(on ${\it Lagr}\, (k)$) in which $\Sigma_V$ can be defined by an
algebraic equation of degree bounded from above by a constant $C$
depending only on $k$ and $W$. Moreover, since for any two $V,V'\in
{\it Lagr}\, (k)$ there exists a diffeomorphism of ${\it Lagr}\,
(k)$ mapping $\Sigma_V$ into $\Sigma_{V'}$ we can assume that
$C=C(k)$ is independent of $W$ and depends only on $k$. Therefore
for any $V$, for any point $W\in\Sigma_V$ and for any sufficiently
small open neighborhood $U_W$ of $W$ in ${\it Lagr}\, (k)$ the
number of connected components of $U_W\setminus (U_W\cap \Sigma_V)$
is bounded by a constant depending only on $k$.

Using these observations and the fact that regular crossings are
isolated it is easy to show that there exists a constant $C(k)$,
depending only on $k$, such that for any Lagrangian subspace
$V\subset \R^{2k}$ and any path $\{ A_t\} \subset Sp\, (2k)$, $0\leq
t\leq 1$, there exists a $\delta>0$ such that for any smooth path
$\{ A'_t\}\subset Sp\, (2k)$, $0\leq t\leq 1$, which is
$\delta$-close to $\{ A_t\}$ in the $C^0$-metric, one has
$$|Ind_{2k} (\{ A_t\}, V) - Ind_{2k} (\{ A'_t\}, V| < C(k),$$
$$|CZ_{matr} (\{ A_t\}) - CZ_{matr} (\{ A'_t\}| < C(k).$$

}
\end{rem}

\medskip\noindent {\sc Leray theorem on the index $Ind_{2k}$:}
The following result follows from Theorem 5.1 in \cite{Rob-Sal}
which Robbin and Salamon credit to Leray \cite{Leray}, p.52. Denote
by $L$ the Lagrangian $(q_1,\ldots,q_k)$-coordinate plane in
$\R^{2k}$. Any symplectic matrix $S\in Sp\, (2k)$ can be decomposed
into $k\times k$ blocks as
$$S=\left(
\begin{array}{cc}
  E & F \\
  G & H \\
\end{array}
\right),$$ where the blocks satisfy, in particular, the condition
that
\begin{equation}
\label{eqn-ABT-BAT-0} EF^T - FE^T=0.
\end{equation}
If $SL\cap L = 0$ then the $k\times k$-matrix $F$ is invertible and
multiplying (\ref{eqn-ABT-BAT-0}) by $F^{-1}$ on the left and
$(F^T)^{-1} = (F^{-1})^T$ on the right, we get that $F^{-1}E - E^T
(F^{-1})^T = 0$. Therefore the matrix $Q_S := F^{-1} E$ is
symmetric.
\begin{thm}[\cite{Rob-Sal}, Theorem 5.1; \cite{Leray}, p.52]
\label{thm-leray} Assume $\{ A_t\}$, $\{ B_t\}$, $0\leq t\leq 1$,
are two smooth paths in $Sp\, (2k)$, such that $A_0=B_0=I_{2k}$ and
$A_1 L\cap L = 0$, $B_1 L\cap L = 0$, $A_1 B_1 L\cap L = 0$. Then
$$Ind_{2k} (\{ A_t B_t\}, L) = Ind_{2k} (\{ A_t\}, L) + Ind_{2k} (\{ B_t\}, L)
+\frac{1}{2} {\rm sign}\, (Q_{A_1} + Q_{B_1}),$$ where ${\rm sign}\,
(Q_{A_1} + Q_{B_1})$ is the signature of the quadratic form defined
by the symmetric $k\times k$-matrix $Q_{A_1} + Q_{B_1}$.
\end{thm}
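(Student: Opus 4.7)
My plan is to prove the formula by combining homotopy invariance and additivity of the Robbin--Salamon index with an explicit identification of a Wall-type boundary correction.

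First, I would homotope the pointwise product path $t\mapsto A_tB_t$, with fixed endpoints, to the concatenated path $\gamma$ defined by $A_{2t}$ on $[0,1/2]$ followed by $A_1B_{2t-1}$ on $[1/2,1]$. Both paths are images under multiplication $Sp(2k)\times Sp(2k)\to Sp(2k)$ of paths in $Sp(2k)\times Sp(2k)$ joining $(I_{2k},I_{2k})$ to $(A_1,B_1)$ whose projections on each factor are homotopic rel endpoints to $\{A_t\}$ and $\{B_t\}$ respectively; thus the two downstairs paths are themselves homotopic rel endpoints. By homotopy invariance of $Ind_{2k}$ and additivity of $RS$ under concatenation of Lagrangian paths,
\[
Ind_{2k}(\{A_tB_t\},L) = Ind_{2k}(\{A_t\},L) + RS(\{A_1B_tL\}_{t\in[0,1]},L).
\]

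Second, the naturality property $RS(\{CL_t\},CV)=RS(\{L_t\},V)$ with $C=A_1$ and $V=L$ yields
\[
RS(\{A_1B_tL\},A_1L) = RS(\{B_tL\},L) = Ind_{2k}(\{B_t\},L),
\]
so the theorem reduces to
\[
RS(\{A_1B_tL\},L) - RS(\{A_1B_tL\},A_1L) = \tfrac12\,\mathrm{sign}(Q_{A_1}+Q_{B_1}).
\]
I would approach this using the Kashiwara--Maslov triple index cocycle applied to the Lagrangian path $\Lambda(t):=A_1B_tL$ against the reference pair $V_0:=L$ and $V_1:=A_1L$. The upshot is that the difference of the two $RS$ indices is a boundary quantity depending only on the endpoints of $\Lambda$ and the pair $(V_0,V_1)$. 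At $t=1$ the endpoint $A_1B_1L$ is transverse to both $V_0$ and $V_1$ by the hypotheses $A_1B_1L\cap L=0$ and $B_1L\cap L=0$, contributing zero; hence the entire discrepancy comes from a single endpoint contribution at $t=0$, where the coincidence $\Lambda(0)=V_1$ produces a half-signature (the $\tfrac12$ being exactly the endpoint convention built into the Robbin--Salamon definition).

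Finally, it remains to identify the Robbin--Salamon crossing quadratic form of $\Lambda$ at $t=0$ with the symmetric matrix $Q_{A_1}+Q_{B_1}$. In the block decomposition preceding the theorem, transversality $A_1L\cap L=0$ furnishes a coordinate chart on a neighborhood of $A_1L$ in $\mathrm{Lagr}(k)$ in which Lagrangians transverse to $L$ correspond to symmetric $k\times k$ matrices. Expressing the derivative of $\Lambda(t)$ at $t=0$ in this chart by differentiating the product $A_1B_tL$, and using the symplectic block identity $EF^T-FE^T=0$, a direct computation identifies the crossing form with $Q_{A_1}+Q_{B_1}$. This last bookkeeping step is the main technical obstacle: the conceptual architecture (concatenation, naturality, Wall-type boundary correction) is the standard strategy for such Leray-type formulas, but pinning the correction down precisely as $\tfrac12\,\mathrm{sign}(Q_{A_1}+Q_{B_1})$ requires carefully unwinding the Robbin--Salamon crossing form in the block coordinates afforded by the transversality hypotheses of the theorem.
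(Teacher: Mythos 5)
First, on provenance: the paper does not actually prove this statement --- it imports it from Robbin--Salamon (Theorem 5.1), who credit Leray --- so there is no internal argument to measure yours against; I am judging the proposal on its own terms. Your first two steps are correct and are the right architecture: the fixed-endpoint homotopy of $\{A_tB_t\}$ to the concatenation of $\{A_t\}$ with $\{A_1B_t\}$, followed by concatenation-additivity of $RS$ and the naturality property, validly reduces the theorem to the identity
$$RS(\{A_1B_tL\},L)-RS(\{A_1B_tL\},A_1L)=\frac{1}{2}\,{\rm sign}\,(Q_{A_1}+Q_{B_1})\;.$$

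The gap is in the evaluation of this left-hand side, which is the only place where $Q_{A_1}+Q_{B_1}$ and the factor $\frac{1}{2}$ can enter, and your sketch of it would fail as written. (a) The localization ``the entire discrepancy comes from a single endpoint contribution at $t=0$'' is not a valid argument: the two $RS$ indices are sums over crossings occurring at different interior times, and these do not cancel termwise. What is true --- and must be imported, e.g.\ from Theorem 3.5 of Robbin--Salamon, since the paper records only homotopy invariance, additivity and naturality --- is that the difference equals the H\"ormander index $s(L,A_1L;A_1L,A_1B_1L)$, a quantity depending only on the endpoint Lagrangians. (b) More seriously, the object you propose to compute in your last paragraph, namely the Robbin--Salamon crossing form of the path $\{A_1B_tL\}$ at $t=0$, cannot equal $Q_{A_1}+Q_{B_1}$: relative to $V_0=L$ there is no crossing at $t=0$ at all (since $A_1L\cap L=0$), while relative to $V_1=A_1L$ the crossing form at $t=0$ is determined by $\dot B_0$ alone and hence is not an endpoint quantity (reparametrize $B_t$ to start with zero velocity and it vanishes, without changing either side of the identity). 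A correct completion must instead evaluate the H\"ormander index directly --- for instance via the Kashiwara--Wall triple index of the three mutually transverse Lagrangians $L$, $A_1B_1L$, $A_1L$ (their pairwise transversality being exactly your three hypotheses) --- and then verify in the block coordinates preceding the theorem that this signature is ${\rm sign}\,(Q_{A_1}+Q_{B_1})$. That computation is the actual content of the Leray formula, and it is precisely the step the proposal leaves out.
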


\begin{cor}
\label{cor-Ind-qmm} Let $V$ be any Lagrangian subspace of $\R^{2k}$.
Then there exists a positive constant $C$, depending only on $k$,
such that for any smooth paths  $\{ X_t\}$, $\{ Y_t\}$, $0\leq t\leq
1$, in $Sp\, (2k)$, such that $X_0=Y_0=I_{2k}$ (there are no
assumptions on $X_1$, $Y_1$!),
$$| Ind_{2k} (\{ X_t Y_t\}, V) - Ind_{2k} (\{ X_t\}, V) -
Ind_{2k} (\{ Y_t\}, V)| < C.$$

\end{cor}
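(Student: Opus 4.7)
The plan is to first reduce to the case $V = L$ using the naturality property \eqref{eqn-naturality-Ind}, then to apply Leray's theorem (Theorem~\ref{thm-leray}) after a small perturbation that enforces the required transversality, and finally to control the change under perturbation using Remark~\ref{rem-Ind-CZ-matr-bounded-change-under-perturbations}.

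First, since $Sp(2k)$ acts transitively on ${\it Lagr}\, (k)$, I would pick $B\in Sp(2k)$ with $BV = L$. Applying \eqref{eqn-naturality-Ind} to the paths $\{X_t\}$, $\{Y_t\}$ and $\{X_tY_t\}$ (all starting at $I_{2k}$, which is fixed by conjugation), one sees that the inequality to be proved for $V$ is equivalent to the same inequality for the conjugated paths $B\{X_t\}B^{-1}$, $B\{Y_t\}B^{-1}$ and the Lagrangian $L$. So I may assume $V = L$. Next, the subset $\{A \in Sp(2k)\,|\, AL\cap L = 0\}$ is open and dense (it is the complement of $\Sigma_L$), hence I can pick matrices $X'_1$, $Y'_1$ arbitrarily close to $X_1$, $Y_1$ so that simultaneously $X'_1 L\cap L = 0$, $Y'_1 L\cap L = 0$ and $X'_1 Y'_1 L\cap L = 0$.

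Having chosen such endpoints, I would deform the original paths into paths $\{X'_t\}$, $\{Y'_t\}$ still starting at $I_{2k}$ and now ending at $X'_1$, $Y'_1$, keeping the $C^0$-distance to the originals arbitrarily small. A concrete device: with a fixed cutoff $\chi:[0,1]\to[0,1]$, $\chi(0)=0$, $\chi(1)=1$, and matrices $M_X, M_Y$ close enough to $I_{2k}$ (so that the logarithm is defined) with $X_1 M_X = X'_1$ and $Y_1 M_Y = Y'_1$, set
$$X'_t := X_t\exp(\chi(t)\log M_X), \qquad Y'_t := Y_t\exp(\chi(t)\log M_Y).$$
As $M_X, M_Y\to I_{2k}$ the three paths $\{X'_t\}$, $\{Y'_t\}$ and $\{X'_tY'_t\}$ approach $\{X_t\}$, $\{Y_t\}$ and $\{X_tY_t\}$ respectively in the $C^0$-metric. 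By Remark~\ref{rem-Ind-CZ-matr-bounded-change-under-perturbations}, I can choose $M_X, M_Y$ so close to $I_{2k}$ that each of
$$|Ind_{2k}(\{X'_t\}, L) - Ind_{2k}(\{X_t\}, L)|, \;\; |Ind_{2k}(\{Y'_t\}, L) - Ind_{2k}(\{Y_t\}, L)|,$$
$$|Ind_{2k}(\{X'_tY'_t\}, L) - Ind_{2k}(\{X_tY_t\}, L)|$$
is less than the universal constant $C(k)$ of the remark.

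Now Theorem~\ref{thm-leray} applies to $\{X'_t\}, \{Y'_t\}$ and yields
$$Ind_{2k}(\{X'_tY'_t\}, L) - Ind_{2k}(\{X'_t\}, L) - Ind_{2k}(\{Y'_t\}, L) = \frac{1}{2}\,\mathrm{sign}(Q_{X'_1}+Q_{Y'_1}),$$
whose right-hand side is the half-signature of a symmetric $k\times k$ form, hence bounded in absolute value by $k/2$. Combining this with the three perturbation estimates gives the corollary with $C = 3C(k) + k/2$, a constant depending only on $k$. The main obstacle will be the bookkeeping in the second step: the perturbation threshold $\delta$ in Remark~\ref{rem-Ind-CZ-matr-bounded-change-under-perturbations} depends on the path, so one must choose $M_X$ and $M_Y$ small enough for all three paths $\{X_t\}$, $\{Y_t\}$ and $\{X_tY_t\}$ at once while also retaining the three-fold transversality at the endpoints; however, since the transversality conditions cut out a residual open dense set in the space of endpoint pairs, this is a finite sequential choice and causes no real difficulty.
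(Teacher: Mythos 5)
Your proposal is correct and follows essentially the same route as the paper's proof: reduce to $V=L$ by conjugation and the naturality property \eqref{eqn-naturality-Ind}, perturb the identity-based paths slightly (controlling the index change via Remark~\ref{rem-Ind-CZ-matr-bounded-change-under-perturbations}) so that the three transversality conditions at the endpoints hold, and then apply Theorem~\ref{thm-leray}, whose correction term $\tfrac12\,\mathrm{sign}(Q_{X'_1}+Q_{Y'_1})$ is bounded by $k/2$. Your explicit perturbation device and the remark about the path-dependence of $\delta$ are more detailed than what the paper writes, but the argument is the same.
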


\begin{proof}

We will write $C_1, C_2,\ldots$ for (possibly different) positive
constants depending only on $k$.

Pick a map $\Psi\in Sp\, (2k)$ such that $\Psi V = L$. Denote $A_t =
\Psi X_t \Psi^{-1}$, $B_t = \Psi Y_t \Psi^{-1}$. Note that the paths
$\{ A_t\}$, $\{ B_t\}$ are based at the identity.

Using the naturality property (\ref{eqn-naturality-Ind}) of
$Ind_{2k}$ we get
$$| Ind_{2k} (\{ X_t Y_t  \}, V) - Ind_{2k} (\{ X_t\}, V) -
Ind_{2k} (\{ Y_t\}, V)| = $$
$$
= | Ind_{2k} (\{ \Psi X_t Y_t \Psi^{-1} \}, \Psi V) - Ind_{2k} (\{
\Psi X_t \Psi^{-1}\}, \Psi V) - $$
$$ - Ind_{2k} (\{ \Psi Y_t \Psi^{-1} \}, \Psi V)| = $$
$$ =
| Ind_{2k} (\{ (\Psi X_t \Psi^{-1}) (\Psi Y_t\Psi^{-1}) \}, L) -
Ind_{2k} (\{ \Psi X_t \Psi^{-1} \},  L) - $$
$$ - Ind_{2k} (\{ \Psi Y_t \Psi^{-1}\}, L)| = $$
$$ = | Ind_{2k} (\{ A_t B_t  \}, L) - Ind_{2k} (\{ A_t\}, L) -
Ind_{2k} (\{ B_t\}, L)|.$$ Thus
\begin{eqnarray}
\label{eqn-X-Y-A-B} | Ind_{2k} (\{ X_t Y_t  \}, V) - Ind_{2k} (\{
X_t\}, V) - Ind_{2k} (\{ Y_t\}, V)| = \cr \ \cr = | Ind_{2k} (\{ A_t
B_t  \}, L) - Ind_{2k} (\{ A_t\}, L) - Ind_{2k} (\{ B_t\}, L)|.
\end{eqnarray}
Further on,
Remark~\ref{rem-Ind-CZ-matr-bounded-change-under-perturbations}
implies that we can find sufficiently $C^0$-close identity-based
perturbations $\{ A'_t\}$, $\{ B'_t\}$ of $\{ A_t\}$, $\{ B_t\}$
such that
\begin{equation}
\label{eqn-3-end-point-conditions} A'_1 L \cap L = 0,\ B'_1 L\cap L
= 0,\ A'_1 B'_1 L \cap L = 0.
\end{equation}
and
\begin{eqnarray}
\label{eqn-A'-B'-A'B'} | Ind_{2k} (\{ A_t B_t  \}, L) - Ind_{2k} (\{
A_t\}, L) - Ind_{2k} (\{ B_t\}, L)| - \cr \ \cr - | Ind_{2k} (\{
A'_t  B'_t  \}, L) - Ind_{2k} (\{ A'_t\}, L) - Ind_{2k} (\{ B'_t\},
L)|
 < C_1,
\end{eqnarray}
for some $C_1$. On the other hand, since the three identity-based
paths $\{ A'_t\}$, $\{ B'_t\}$, $\{ A'_t B'_t\}$, satisfy the
conditions (\ref{eqn-3-end-point-conditions}), we can apply to them
Theorem~\ref{thm-leray}. Hence there exists $C_2$ such that
$$| Ind_{2k} (\{ A'_t B'_t  \}, L) - Ind_{2k} (\{ A'_t\}, L) -
Ind_{2k} (\{ B'_t\}, L)| < C_2.$$ Combining it with
(\ref{eqn-X-Y-A-B}) and (\ref{eqn-A'-B'-A'B'}) we get that there
exists $C_3$ such that
$$| Ind_{2k} (\{ X_t Y_t  \}, V) - Ind_{2k} (\{ X_t\}, V) -
Ind_{2k} (\{ Y_t\}, V)| < C_3,$$ which finishes the proof.
\end{proof}

\medskip \noindent {\sc Conley-Zehnder index as a quasi-morphism:}
Recall that $2n= {\rm dim}\, M$. Restricting $CZ_{matr}$ to the
identity-based paths in $Sp\, (2n)$ one gets a function on
$\widetilde{Sp\, (2n)}$ that will be still denoted by $CZ_{matr}$.

\begin{prop}[cf. \cite{DeGossons}]
\label{prop-CZ-qmm} The function $CZ_{matr}: \widetilde{Sp\,
(2n)}\to \R$ is a {\it quasi-morphism}. It means that there exists a
constant $C>0$ such that
$$ | CZ_{matr} (ab) - CZ_{matr} (a) - CZ_{matr} (b)|\leq C
\ \ \forall a,b\in \widetilde{Sp\, (2n)}. $$
\end{prop}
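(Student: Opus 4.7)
The plan is to deduce the quasi-morphism property of $CZ_{matr}$ from Corollary~\ref{cor-Ind-qmm} by means of the formula (\ref{eqn-CZ-Ind-4n}). Represent two elements $a,b\in \widetilde{Sp\,(2n)}$ by smooth identity-based paths $\{A_t\}$ and $\{B_t\}$ in $Sp\,(2n)$, $0\le t\le 1$. Since $Sp\,(2n)$ is a Lie group, the pointwise product $\{A_tB_t\}$ is a smooth identity-based path from $I_{2n}$ to $A_1B_1$, and a standard reparametrization argument shows it is homotopic rel endpoints to the concatenation $\{A_t\}*\{A_1B_t\}$ which represents $ab$ in the universal cover. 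Because $CZ_{matr}$ depends only on the fixed-endpoints homotopy class, $CZ_{matr}(ab)=CZ_{matr}(\{A_tB_t\})$, so it suffices to bound the defect of $CZ_{matr}$ on pointwise products.

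Next, I would use the crucial algebraic observation that the doubling operation $A\mapsto \widehat{A}:=I_{2n}\oplus A$ is a group homomorphism $Sp\,(2n)\to Sp\,(4n)$, so $\widehat{A_tB_t}=\widehat{A}_t\widehat{B}_t$ for all $t$. Combined with the definition $CZ_{matr}(\{A_t\})=Ind_{4n}(\{\widehat{A}_t\},\Delta)$, this gives
\[
CZ_{matr}(ab)=Ind_{4n}(\{\widehat{A}_t\widehat{B}_t\},\Delta),\quad CZ_{matr}(a)=Ind_{4n}(\{\widehat{A}_t\},\Delta),\quad CZ_{matr}(b)=Ind_{4n}(\{\widehat{B}_t\},\Delta),
\]
and the paths $\{\widehat{A}_t\},\{\widehat{B}_t\}$ are both based at the identity $I_{4n}$ in $Sp\,(4n)$, which is exactly the hypothesis needed for Corollary~\ref{cor-Ind-qmm}.

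Finally, I would invoke Corollary~\ref{cor-Ind-qmm} with ambient dimension $k=2n$ and Lagrangian subspace $V=\Delta\subset \R^{4n}$ applied to $X_t=\widehat{A}_t$ and $Y_t=\widehat{B}_t$. This produces a constant $C=C(2n)$, depending only on $n$, such that
\[
|Ind_{4n}(\{\widehat{A}_t\widehat{B}_t\},\Delta)-Ind_{4n}(\{\widehat{A}_t\},\Delta)-Ind_{4n}(\{\widehat{B}_t\},\Delta)|<C,
\]
which by the identifications above is precisely the inequality $|CZ_{matr}(ab)-CZ_{matr}(a)-CZ_{matr}(b)|<C$. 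Since the bound is uniform in $a,b$, this establishes that $CZ_{matr}$ is a quasi-morphism.

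The main substantive input is already packaged in Corollary~\ref{cor-Ind-qmm}, whose proof reduces, via conjugation and a $C^0$-small perturbation, to the Leray decomposition formula (Theorem~\ref{thm-leray}); the signature term there contributes the bounded defect. The only thing one must verify with care here is the compatibility of the three different ways of taking products — concatenation of paths in the universal cover, pointwise product of matrix paths, and pointwise product of the doubled paths — but each identification is either a homotopy rel endpoints (first two) or a literal equality of matrices (third), so no further subtlety arises.
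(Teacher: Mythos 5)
Your argument is correct and is exactly the paper's proof: represent $a,b$ by identity-based paths, use formula \eqref{eqn-CZ-Ind-4n} to rewrite $CZ_{matr}$ as $Ind_{4n}(\{\widehat{A}_t\},\Delta)$, and apply Corollary~\ref{cor-Ind-qmm} with $k=2n$, $V=\Delta$ to the doubled paths. The extra verifications you spell out (the homotopy between the pointwise product path and the concatenation representing $ab$, and the fact that doubling is a homomorphism) are left implicit in the paper but are exactly the right points to check.
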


\begin{proof}
Represent $a$ and $b$ by identity-based paths $\{ A_t\}$, $\{
B_t\}$, $0\leq t\leq 1$, in $Sp\, (2n)$. Then use
(\ref{eqn-CZ-Ind-4n}) and apply Corollary~\ref{cor-Ind-qmm} for
$k=2n$, $V=\Delta$ to $\{ \widehat{A}_t\}$, $\{ \widehat{B}_t\}$ in
$Sp\, (4n)$.
\end{proof}

\medskip \noindent {\sc Maslov index of symplectic loops:}
The Conley-Zehnder index for identity-based loops in $Sp\, (2n)$ is
called the {\it Maslov index} of a loop. Its original definition,
going back to \cite{Arnold}, is the following: it is the
intersection number of an identity-based loop with the stratified
hypersurface $\Sigma$ whose principal stratum is equipped with a
certain co-orientation. Note that we do not divide the intersection
number by $2$ and thus in our case the Maslov index takes only even
values; for instance, the Maslov index of a counterclockwise
$2\pi$-twist of the standard symplectic $\R^2$ is $2$. We denote the
Maslov index of a loop $\{ B (t)\}$ by $\Maslov (\{ B (t)\})$.

\medskip\noindent {\sc Conley-Zehnder and Maslov indices of periodic
orbits:} The Con\-ley-Zehnder index for periodic orbits is defined
by means of the Conley-Zehnder index for matrix paths as follows.
Given $[\gamma, u]\in \tP_F$, build an identity-based path $\{
A(t)\}$ in $Sp\, (2n)$ as follows: take a symplectic
trivialization of the bundle $u^* (TM)$ over $\D^2$ and use the
trivialization to identify the linearized flow $d_{\gamma (0)}
f_t$, $0\leq t\leq 1$, along $\gamma$ with a symplectic matrix $\{
A (t)\}$. Then the Conley-Zehnder index $CZ_F ([\gamma, u])$ is
defined as
\begin{equation}
\label{eqn-def-CZ-in-Floer-theory} CZ_F ([\gamma, u]) := n -
CZ_{matr}\, (\{ A(t)\}).
\end{equation}
With such a normalization of $CZ_F$ for any sufficiently
$C^2{\hbox{\rm -small}}$ autonomous Morse Hamiltonian $F$, the
Conley-Zehnder index of an element of $\tP_F$, represented by a
pair $[x, u]$ consisting of a critical point $x$ of $F$ (viewed as
a constant path in $M$) and the trivial disk $u$, is equal to the
Morse index of $x$. Note that with such a normalization $CZ_F (Sy)
= CZ_F (y) + 2\int_S c_1(M)$ for every $y \in \tP_F$ and $S \in
H_2^S(M)$.

Similarly, if the time-1 flow generated by $F$ defines a loop in
$\Ham  (M)$ then to each $[\gamma, u]\in \tP_F$ one can associate
its Maslov index. Namely, trivialize the bundle $u^* (TM)$ over
$\D^2$ and identify the linearized flow $\{ d_x f_t \}$ along
$\gamma$ with an identity-based loop of symplectic $2n\times
2n$-matrices. Define the Maslov index $m_F ([\gamma, u])$ as the
Maslov index for the loop of symplectic matrices. Under the action
of $H_2^S (M)$ on $\tP_F$ the Maslov index changes as follows:
\[
m_F (S\cdot [\gamma, u]) = m_F ([\gamma, u]) - 2\int_S c_1(M), \ \ S
\in H_2^S (M).
\]

Let us make the following remark. Assume $\gamma\in \cP_F$ and
assume that a symplectic trivialization of the bundle $\gamma^*
(TM)$ over $\SP^1$ identifies $\{ d_{\gamma (0)} f_t\}$ with an
identity-based path $\{ A(t)\}$ of symplectic matrices. Assume
there is another symplectic trivialization of the same bundle,
coinciding with the first one at $\gamma (0)$, and denote by $\{
B(t)\}$ the identity-based loop of transition matrices from the
first symplectic trivialization to the second one. Use the second
trivialization to identify $\{ d_{\gamma (0)} f_t\}$ with an
identity-based path $\{ A^\prime (t)\}$. Then
\begin{equation}
\label{eqn-CZ-trivializations} CZ_{matr}\, (\{ A^\prime (t)\}) =
CZ_{matr}\, (\{ A (t)\}) + \Maslov (\{ B(t)\}),
\end{equation}
and if $\{ A(t)\}$ is a loop then so is $\{ A^\prime (t)\}$ and
\begin{equation}
\label{eqn-Maslov-trivializations} \Maslov (\{ A^\prime (t)\}) =
\Maslov (\{ A (t)\}) + \Maslov (\{ B(t)\}).
\end{equation}

\subsection{Spectral numbers} \label{sec-spectral-numbers}

Given the algebraic setup as above, the construction of the
Piu\-ni\-khin-Sa\-la\-mon-Schwarz (PSS) isomorphism \cite{PSS}
yields a $\Lambda$-linear isomorphism ({\it PSS-isomorphism})
$\phi_M: QH_* (M) \to HF_* (F,J)$ which preserves the grading and
which is actually a ring isomorphism (the pair-of-pants product
defines a ring structure on $HF_* (F,J)$).

Using the PSS-isomorphism one defines the {\it spectral numbers}
$c (a, F)$, where $0\neq a\in QH_* (M)$, in the usual way
\cite{Oh-spectral}. Namely, the action functional $\cA_F$ defines
a filtration on $C (F)$ which induces a filtration $HF^\alpha_*
(F,J)$, $\alpha\in\R$, on $HF_* (F,J)$, with $HF^\alpha_*
(F,J)\subset HF^\beta_* (F,J)$ as long as $\alpha <\beta$. Then
$$ c(a,F):= \inf\, \{ \alpha\, |\, \phi_M (a) \in HF^\alpha_* (F,J)\}.$$
Such spectral number is finite and well-defined (does not depend on
$J$). Here is a brief account of the relevant properties of spectral
numbers -- for details see \cite{Oh-spectral} (see also
\cite{Viterbo, Oh1, Schwarz, Oh2} for earlier versions of this
theory).

\begin{description}

\item[{\bf (Spectrality)}]\ $c (a, H)\in {\it {spec}}\,
(H)$, where {\it the spectrum ${\it {spec}}\, (H)$ of $H$} is
defined as the set of critical values of the action functional
$\cA_H$, i.e. ${\it {spec}}\, (H) := \cA_H (\tP_H) \subset \R$.

\item[{\bf (Quantum homology shift property)}]\ $c (\lambda a, H)
= c (a, H) + \nu (\lambda)$ for all $\lambda \in \Lambda$, where
$\nu$ is the valuation defined in Section~\ref{subsec-valuation}.

\item[{\bf (Hamiltonian shift property)}]\ $c (a, H+\lambda (t) )
= c (a, H) + \int_0^1 \lambda (t) \ dt$ for any Hamiltonian $H$
and function $\lambda: \SP^1 \to \R$.

\item[{\bf (Monotonicity)}]\ If $H_1\leq H_2$, then $c
(a, H_1)\leq c (a, H_2)$.

\item[{\bf (Lipschitz property)}]\ The map $H\mapsto c (a, H)$ is
Lipschitz on the space of (time-dependent) Hamiltonians $H:
M\times \SP^1\to\R$ with respect to the $C^0$-norm.

\item[{\bf (Symplectic invariance)}]\ $c (a,\phi^*H) = c (a,H)$
for every $\phi \in \Symp_0 (M)$, $H \in C^{\infty} (M)$; more
generally, $\Symp\, (M)$ acts on $H_* (M; \cF)$, and hence on
$QH_* (M)$, and  $c (a,\phi^*H) = c (\phi_* a,H)$ for any
$\phi\in\Symp\, (M)$.

\item[{\bf (Normalization)}]\ $c (a,0) = \nu (a)$ for every $a \in
QH_* (M)$.

\item[{\bf (Homotopy invariance)}]\ $c (a, H_1) = c (a, H_2)$
for any {\it normalized} $H_1, H_2$ generating the same
$\phi\in\tHam (M)$. Thus one can define $c (a,\phi)$ for any
$\phi\in\tHam (M)$ as $c (a,H)$ for any normalized $H$
generating $\phi$.

\item[{\bf (Triangle inequality)}]\
$c (a\ast b, \phi\psi)\leq c (a, \phi) + c (b, \psi)$.
\end{description}

\medskip
\noindent The commutative ring $QH_\bullet (M)$ admits a
$\cK$-bilinear and $\cK$-valued form $\Omega$ on $QH_\bullet (M)$
which associates to a pair of quantum homology classes $a, b\in
QH_\bullet (M)$ the coefficient (belonging to $\cK$) at the class
$[point] = [point]\cdot q^0$ of a point in their quantum product
$a\ast b \in QH_\bullet (M)$ ({\it the Frobenius structure}). Let
$\tau: \cK \to \cF$ be the map sending each series
$\sum_{\theta\in\Gamma} z_\theta s^\theta$, $z_\theta\in \cF$, to
its free term $z_0$. Define a non-degenerate $\cF$-valued
$\cF$-linear pairing on $QH_\bullet (M)$ by
\begin{equation}
\label{def-Pi-pairing} \Pi (a,b) := \tau\Omega(a,b) =
\tau\Omega(a*b,[M])\;.
\end{equation}
Note that $\Pi$ is symmetric and
\begin{equation}
\label{eqn-Pi-Frobenius} \Pi (a*b, c) = \Pi (a, b*c) \ \forall
a,b,c\in QH_\bullet (M).
\end{equation}
 With this notion at hand, we can present another
important property of spectral numbers:

\begin{description}
\item[{\bf (Poincar\'e duality)}]\ $c(b,\phi) = -\inf_{a \in
\Upsilon(b)} c(a,\phi^{-1})$ for all $b \in QH_\bullet (M)
\setminus\{0\}$ and $\phi$. Here $\Upsilon(b)$ denotes the set of
all $a \in QH_\bullet (M)$ with $\Pi(a,b) \neq 0$.
\end{description}

\medskip
\noindent The Poincar\'e duality can be extracted from \cite{PSS}
(cf. \cite{EP-qmm}) -- for a proof see \cite{Ostr-qmm}.

\medskip \noindent
The next property is an immediate consequence of the definitions
(see \cite{EP-qmm} for a discussion in the monotone case):

 \begin{description}
\item[{\bf (Characteristic exponent property)}]\ Given $0\neq
\lambda \in \cF$, $a,b \in QH_* (M)$, $a,b,a+b\neq 0$, and a
(time-dependent) Hamiltonian $H$, one has \break $c(\lambda \cdot
a, H) = c(a,H)$ and $c(a+b,H) \leq \max(c(a,H),c(b,H)) $.
\end{description}

\subsection{Partial symplectic
quasi-states}\label{sec-qst-qmm-non-monotone}

 Given a non-zero idempotent
$a\in QH_{2n} (M)$ and a time-independent Hamiltonian $H: M\to
\R$, define
\begin{equation}
\label{eqn-def-zeta} \zeta (a,H): = \lim_{l \to +\infty}\; \frac{c
(a,lH)}{l}\;.
\end{equation}

\medskip
\noindent When $a$ is fixed, we shall often abbreviate $\zeta(H)$
instead of $\zeta(a,H)$. The limit in the formula
(\ref{eqn-def-zeta}) always exists and thus the functional $\zeta:
C^\infty (M)\to \R$ is well-defined. The functional $\zeta$ on
$C^\infty (M)$ is Lipschitz with respect to the $C^0$-norm $\|H\|
= \max_M |H|$ and therefore extends to a functional $\zeta: C
(M)\to \R$, where $C(M)$ is the space of all continuous functions
on $M$. These facts were proved in \cite{EP-qst} in the case
$a=[M]$ but the proofs actually go through for any non-zero
idempotent $a\in QH_{2n} (M)$.

 Here we will list the properties of $\zeta$ for such an $M$.
Again, these properties were proved in \cite{EP-qst} in the case
$a=[M]$ but the proof goes through for any non-zero idempotent $a\in
QH_{2n} (M)$. The additivity with respect to constants property was
not explicitly listed in \cite{EP-qst} but follows immediately from
the definition of $\zeta$ and the Hamiltonian shift property of
spectral numbers. The triangle inequality follows readily from the
definition of $\zeta$ and from the triangle inequality for the
spectral numbers.

\begin{thm}
\label{thm-partial-qstate-partial-qmm-basic}\ The functional $\zeta:
C (M)\to \R$ satisfies the following properties:

\medskip
\noindent \underline{\it Semi-homogeneity:} $\zeta (\alpha F) =
\alpha \zeta (F)$ for any $F$ and any $\alpha \in \R_{\geq 0}$.

\medskip
\noindent \underline{\it Triangle inequality:} If $F_1, F_2\in
C^\infty (M)$, $\{ F_1, F_2\} =0$ then $\zeta (F_1+F_2)\leq \zeta
(F_1) + \zeta (F_2)$.

\medskip
\noindent \underline{\it Partial additivity and vanishing:} If $F_1,
F_2\in C^\infty (M)$, $\{ F_1, F_2\} =0$ and the support of $F_2$ is
displaceable, then $\zeta (F_1 + F_2) = \zeta (F_1)$; in particular,
if the support of $F\in C(M)$ is displaceable, $\zeta (F) = 0$.

\medskip
\noindent \underline{\it Additivity with respect to constants and
normalization:} $\zeta (F +\alpha) = \zeta (F) + \alpha$ for any $F$
and any $\alpha\in\R$. In particular, $\zeta (1) = 1$.

\medskip
\noindent \underline{\it Monotonicity:} $\zeta (F) \leq \zeta (G)$
for $F \leq G$.


\medskip
\noindent \underline{\it Symplectic invariance:} $\zeta (F) =
\zeta (F\circ f)$ for every symplectic diffeomorphism $f \in
\Symp_0\, (M)$.

\medskip
\noindent \underline{\it Characteristic exponent property:}
$\zeta(a_1+a_2,F) \leq \max(\zeta(a_1,F), \zeta(a_2,F))$ for each
pair of non-zero idempotents $a_1,a_2$ with $a_1*a_2=0$,
$a_1+a_2\neq 0$ (in this case $a_1 + a_2$ is also a non-zero
idempotent), and for all $ F \in C(M)\;.$
\end{thm}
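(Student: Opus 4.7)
The plan is to derive each of the seven properties of $\zeta$ as a limit, along the sequence $l \to +\infty$, of the corresponding property of the spectral invariants $c(a,\cdot)$ listed in Section~\ref{sec-spectral-numbers}. The existence of the limit in \eqref{eqn-def-zeta} and the $C^0$-Lipschitz estimate on $\zeta$ are already recorded just before the theorem, so the passage from $C^\infty(M)$ to $C(M)$ is automatic; it suffices to verify the properties for smooth $F$.

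Four of the items are essentially one-line consequences of spectral properties. For \emph{monotonicity}, $F\le G$ gives $lF\le lG$ for $l\ge 0$, and one applies the monotonicity of $c(a,\cdot)$ and divides by $l$. For \emph{symplectic invariance}, one applies the symplectic invariance of $c(a,\cdot)$ to $lF$ (noting that for $f\in\Symp_0(M)$ the action on $QH_*(M)$ is trivial, so $f_*a=a$). For \emph{additivity with respect to constants}, the Hamiltonian shift property gives $c(a,lF+l\alpha)=c(a,lF)+l\alpha$; dividing by $l$ yields $\zeta(F+\alpha)=\zeta(F)+\alpha$, and the normalization $\zeta(1)=1$ follows from $\zeta(0)=\lim \nu(a)/l = 0$. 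For \emph{semi-homogeneity}, first handle rational $\alpha = p/q\ge 0$ by passing to the subsequence $l=qm$ in the definition of $\zeta(\alpha F)$, and then extend to all $\alpha\ge 0$ using the $C^0$-Lipschitz property of $\zeta$.

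The two genuinely Floer-theoretic items are the \emph{triangle inequality} and the \emph{characteristic exponent property}. For the former, since $\{F_1,F_2\}=0$ the Hamiltonian flows commute and $\phi_{l(F_1+F_2)}=\phi_{lF_1}\phi_{lF_2}$ in $\tHam(M)$; applying the triangle inequality for spectral numbers with the idempotent $a=a*a$ gives $c(a,l(F_1+F_2))\le c(a,lF_1)+c(a,lF_2)$, and division by $l$ followed by the limit $l\to\infty$ finishes the argument. For the characteristic exponent property, one applies the analogous inequality for $c(a_1+a_2,lF)$, divides by $l$, and uses the elementary fact that a limit of a maximum of two convergent sequences equals the maximum of the two limits.

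The hardest step, and the only one requiring a separate input beyond the basic list of spectral properties, is the \emph{partial additivity and vanishing} item. The heart is to show that if $\supp F_2$ is displaceable then $c(a,lF_2)/l\to 0$ as $l\to\infty$. The standard route (going back to Schwarz and Oh) is to use the triangle inequality and Poincaré duality for spectral numbers to derive the bound
\[
|c(a,H)|\;\le\; C\cdot e(\supp H),
\]
where $e(\cdot)$ denotes the Hofer displacement energy and $C$ depends only on $a$. Indeed, if $\phi_K$ displaces $\supp H$, then $K$ and $H$ have disjoint support, hence Poisson-commute, and one compares $c(a,\phi_H\phi_K)$ with $c(a,\phi_K)$ by inserting $\phi_H^{-1}\phi_H$ and applying the triangle inequality twice, obtaining bounds independent of $H$. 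Applied to $H=lF_2$ (whose support coincides with $\supp F_2$ and is therefore displaced by the same $\phi_K$) this gives $|c(a,lF_2)|\le C\cdot e(\supp F_2)$, uniformly in $l$, so $\zeta(F_2)=0$. Partial additivity then follows by writing
\[
c(a,lF_1)\;-\;|c(a,-lF_2)|\;\le\;c(a,l(F_1+F_2))\;\le\;c(a,lF_1)+c(a,lF_2),
\]
(using commutativity of the flows of $F_1,F_2$ once more), dividing by $l$, and letting $l\to\infty$. This displacement-energy estimate for $c(a,H)$ is the one point in the argument that does real work; everything else is bookkeeping at the level of limits of spectral numbers.
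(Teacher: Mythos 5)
Most of your proposal is correct and follows the route the paper itself takes: the paper defers the bulk of this theorem to \cite{EP-qst} and only remarks that additivity with respect to constants and the triangle inequality follow from the Hamiltonian shift property and the triangle inequality for spectral numbers. Your treatments of semi-homogeneity, monotonicity, symplectic invariance, additivity with respect to constants, the triangle inequality and the characteristic exponent property are all fine (the passage from normalized to non-normalized Hamiltonians in the triangle inequality is routine bookkeeping via the Hamiltonian shift property, as you implicitly assume).

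The gap is exactly at the step you call ``the one point that does real work.'' The claim ``if $\phi_K$ displaces $\mathrm{supp}\, H$, then $K$ and $H$ have disjoint support'' is false --- in fact it is backwards: if $K$ vanished near the support of $H$, its flow would fix that set pointwise and could not displace it. Consequently the plan of ``inserting $\phi_H^{-1}\phi_H$ and applying the triangle inequality twice'' does not close: the triangle inequality only gives $c(a,\phi_{lF_2}) \le c(a,\phi_K^{-1}) + c(a,\phi_K\phi_{lF_2})$, and without further input the term $c(a,\phi_K\phi_{lF_2})$ may still grow linearly in $l$. The actual content of the vanishing property (the key lemma of \cite{EP-qst}) is that $c(a,\phi_K\phi_{sF_2})$ is, up to the explicit normalization shift, \emph{independent of $s$}. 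This is proved not from the triangle inequality but from the spectrality axiom plus a continuity argument: since $\phi_K$ displaces an open $U\supset \mathrm{supp}\, F_2$ and the flow of $F_2$ preserves $U$ and is the identity outside it, the fixed-point set and hence the action spectrum of $\phi_K\phi_{sF_2}$ do not move as $s$ varies; as $c(a,\cdot)$ is continuous and takes values in this nowhere dense spectrum, it is constant in $s$ up to the shift. Only after this does your combination of the triangle inequality and Poincar\'e duality yield the two-sided bound on $c(a,\pm lF_2)$ uniform in $l$, hence $\zeta(F_2)=0$ and partial additivity. As written, the proposal replaces this essential spectrality/deformation argument with a false premise, so the central step is genuinely missing. (A minor additional point: the bound should be of the form $c(a,H)\le \nu(a)+e(U)$ rather than $|c(a,H)|\le C\cdot e(\mathrm{supp}\, H)$, since $c(a,0)=\nu(a)$ need not vanish.)
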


We will call the functional $\zeta: C(M)\to \R$ satisfying all the
properties listed in
Theorem~\ref{thm-partial-qstate-partial-qmm-basic} {\it a partial
symplectic quasi-state}.

\section{Basic properties of (super)heavy
sets}\label{sec-basic-proofs}

In this section we prove parts (i) and (iii) of
Theorem~\ref{thm-intro-heavy}, as well as
Theorem~\ref{thm-diff-idempotents}. We shall use that a partial
symplectic quasi-state $\zeta$ extends by continuity in the
uniform norm to a monotone functional on the space of {\bf
continuous} functions $C(M)$, see
Section~\ref{sec-qst-qmm-non-monotone} above. In particular, one
can use continuous functions instead of the smooth ones in the
definition of (super)heaviness in formulae \eqref{eq-heavy-0} and
\eqref{eq-superheavy-0}.

Assume a partial quasi-state $\zeta$ defined by a non-zero
idempotent is fixed and we consider heaviness and superheaviness
with respect to $\zeta$. We start with the following elementary

\begin{prop}\label{prop-def-heavy}
A closed subset $X \subset M$ is heavy if and only if for every $H
\in C^{\infty}(M)$ with $H|_X=0$, $H \leq 0$ one has $\zeta(H)=0$. A
closed subset $X \subset M$ is superheavy if and only if for every
$H \in C^{\infty}(M)$ with $H|_X=0$, $H \geq 0$ one has
$\zeta(H)=0$.
\end{prop}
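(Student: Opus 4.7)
The plan is to deduce the proposition from the monotonicity, constant-shift, and normalization properties of the partial quasi-state $\zeta$ recorded in Theorem~\ref{thm-partial-qstate-partial-qmm-basic}. In particular, $\zeta(0)=0$ follows from $\zeta(1)=1$ and additivity with respect to constants, and monotonicity combined with $\zeta(0)=0$ gives $\zeta(H)\leq 0$ for $H\leq 0$ and $\zeta(H)\geq 0$ for $H\geq 0$.

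The ``only if'' direction in both parts is essentially a one-line check. If $X$ is heavy and $H\in C^\infty(M)$ satisfies $H\leq 0$ and $H|_X=0$, then $\inf_X H=0$, so heaviness yields $\zeta(H)\geq 0$, while monotonicity yields $\zeta(H)\leq 0$; hence $\zeta(H)=0$. The superheavy case is symmetric with the inequalities reversed.

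For the ``if'' direction of the heavy statement I would take arbitrary $F\in C^\infty(M)$, set $c:=\inf_X F$, and observe that by additivity with respect to constants it suffices to prove $\zeta(G)\geq 0$ whenever $G\in C^\infty(M)$ satisfies $G\geq 0$ on $X$ (take $G=F-c$). Given $\epsilon>0$, I would choose a smooth nondecreasing function $\phi_\epsilon:\R\to\R$ with $\phi_\epsilon(t)=0$ for $t\geq\epsilon/2$, $\phi_\epsilon(t)=t-\epsilon/2$ for $t\leq 0$, and $\phi_\epsilon(t)\leq \min(t,0)$ throughout, and set $H:=\phi_\epsilon\circ(G+\epsilon)$. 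Then $H$ is smooth, $H\leq 0$ globally, and $H\equiv 0$ on $X$ because $G+\epsilon\geq\epsilon>\epsilon/2$ there. The hypothesis then forces $\zeta(H)=0$; combined with the pointwise bound $H\leq G+\epsilon$ and the monotonicity plus constant-shift properties, this gives $0=\zeta(H)\leq\zeta(G+\epsilon)=\zeta(G)+\epsilon$, and letting $\epsilon\to 0$ completes the argument. The superheavy ``if'' direction is handled by the mirror construction: for $F$ with $c:=\sup_X F$, one composes $F-c-\epsilon$ with a smooth nondecreasing $\psi_\epsilon$ satisfying $\psi_\epsilon(t)=0$ for $t\leq -\epsilon/2$, $\psi_\epsilon(t)=t+\epsilon/2$ for $t\geq 0$, and $\psi_\epsilon\geq\max(t,0)$ everywhere, producing a smooth $H\geq 0$ with $H|_X=0$ and $H\geq F-c-\epsilon$.

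The main obstacle is purely technical: one would like to test with the continuous function $\min(G,0)$, but the hypothesis is stated for smooth Hamiltonians, so this function must be realized by a smooth approximation. The $\epsilon$-shift in the construction plays two roles: it pushes the ``corner'' of $\min$ away so that the composition is genuinely smooth, and, crucially, it guarantees that $H$ vanishes \emph{identically} on $X$ rather than merely being nonpositive there, which is exactly the form in which the hypothesis has been placed at our disposal.
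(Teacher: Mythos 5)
Your proof is correct and follows essentially the same route as the paper: the ``only if'' direction from monotonicity, and the ``if'' direction by testing the hypothesis against (a version of) $\min(H-\inf_X H,0)$. The only difference is technical: the paper applies the hypothesis directly to the continuous function $\min(H-\inf_X H,0)$, justified by the $C^0$-Lipschitz extension of $\zeta$ to $C(M)$ noted at the start of its Section 4, whereas you replace it by the explicit smooth approximation $\phi_\epsilon\circ(G+\epsilon)$ and let $\epsilon\to 0$ -- which is, if anything, slightly more self-contained.
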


\begin{proof}
The  ``only if" parts follow readily from the monotonicity
property of $\zeta$. Let us prove the  ``if" part in the  ``heavy
case" -- the  ``superheavy" case is similar.  Take a function $H$
on $M$ and put
$$F = \min (H - \inf_X H, 0)\;.$$
Note that $F|_X = 0$ and $F \leq 0$. Thus $\zeta(F) =0$ by the
assumption of the proposition. Thus
$$0 = \zeta(F) \leq \zeta(H - \inf_X H)=\zeta(H) - \inf_X H\;,$$
which yields heaviness of $X$.
\end{proof}

\medskip
\noindent The following proposition proves part (i) of
Theorem~\ref{thm-intro-heavy}.

\begin{prop}\label{prop-supheavy-yields-heavy}
Every superheavy set is heavy.
\end{prop}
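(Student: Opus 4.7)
The plan is to exploit the gap between semi-homogeneity and full homogeneity of $\zeta$ via the triangle inequality applied to the Poisson-commuting pair $H$ and $-H$. Concretely, given any $H\in C^\infty(M)$, I would first apply the superheaviness hypothesis to $-H$ to obtain
\[
\zeta(-H)\ \leq\ \sup_X(-H)\ =\ -\inf_X H,
\]
so the goal reduces to showing $\zeta(H)\geq -\zeta(-H)$, that is
\[
\zeta(H)+\zeta(-H)\ \geq\ 0.
\]

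Next, I would observe that since $\{H,-H\}=0$, the triangle inequality from Theorem~\ref{thm-partial-qstate-partial-qmm-basic} gives
\[
\zeta(0)\ =\ \zeta(H+(-H))\ \leq\ \zeta(H)+\zeta(-H).
\]
It remains to note that $\zeta(0)=0$: this follows from semi-homogeneity applied with $\alpha=2$, which forces $\zeta(0)=2\zeta(0)$, hence $\zeta(0)=0$ (alternatively, combine semi-homogeneity with additivity of constants). Chaining the two displayed inequalities yields $\zeta(H)\geq -\zeta(-H)\geq \inf_X H$ for every $H$, which is the heaviness of $X$.

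There is really no obstacle here beyond bookkeeping; the point is that, while $\zeta$ is only $\R_{\geq 0}$-homogeneous, the Poisson-commuting triangle inequality together with $\zeta(0)=0$ supplies the one-sided substitute for $\zeta(-H)=-\zeta(H)$ that is needed to flip a $\sup$-bound into an $\inf$-bound.
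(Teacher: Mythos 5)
Your proof is correct and uses the same key idea as the paper's: apply the triangle inequality to the Poisson-commuting pair $H,-H$ to get $\zeta(H)+\zeta(-H)\geq \zeta(0)=0$, then invoke superheaviness for $-H$. The only cosmetic difference is that the paper first reduces to the case $H|_X=0$, $H\leq 0$ via Proposition~\ref{prop-def-heavy}, whereas you work with a general $H$ directly; both routes are fine.
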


\begin{proof} Let $X \subset M$ be a superheavy subset.
Assume that $H|_X=0$, $H \leq 0$. By the triangle inequality for
$\zeta$ we have $\zeta(H)+\zeta(-H) \geq 0$. Note that $-H|_X =0$,
$-H \geq 0$. Superheaviness yields $\zeta(-H)=0$, so $\zeta(H)
\geq 0$. But by monotonicity $\zeta(H) \leq 0$. Thus $\zeta(H)=0$
and the claim follows from
Proposition~\ref{prop-def-heavy}.\end{proof}

\medskip
\noindent Superheavy sets have the following user-friendly
property.

\begin{prop}\label{prop-superheavy-use} Let $X \subset M$
be a superheavy set. Then for every $\alpha \in \R$ and $H \in
C^{\infty}(M)$ with $H|_X \equiv \alpha$ one has
$\zeta(H)=\alpha$.
\end{prop}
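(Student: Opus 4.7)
My plan is that this is essentially a two-line consequence of combining the defining inequality for superheaviness with the implication (superheavy $\Rightarrow$ heavy) established in Proposition~\ref{prop-supheavy-yields-heavy}. The hypothesis $H|_X\equiv\alpha$ forces $\sup_X H = \inf_X H = \alpha$, which squeezes $\zeta(H)$ from both sides.

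Concretely, the first step is to apply the superheaviness inequality \eqref{eq-superheavy-0} to $H$: since $H|_X\equiv\alpha$ we get
\[
\zeta(H)\;\leq\;\sup_X H \;=\;\alpha.
\]
For the reverse inequality I would invoke Proposition~\ref{prop-supheavy-yields-heavy}, which says that any superheavy set is automatically heavy; thus inequality \eqref{eq-heavy-0} applies, giving
\[
\zeta(H)\;\geq\;\inf_X H \;=\;\alpha.
\]
Combining these yields $\zeta(H)=\alpha$, which is exactly the claim.

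I do not anticipate any genuine obstacle here: the statement simply packages together the two one-sided inequalities that define heavy and superheavy sets, together with the already-proved fact that superheavy sets are heavy. The only mild point worth noting is that in the definitions \eqref{eq-heavy-0} and \eqref{eq-superheavy-0} the test functions are taken from $C^{\infty}(M)$, so $H$ being smooth (as assumed in the statement) is exactly what is needed; no continuous extension of $\zeta$ is required.
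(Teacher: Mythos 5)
Your proof is correct, and it is slightly more direct than the one in the paper. Both arguments rest on the same squeeze --- the upper bound from superheaviness and the lower bound from heaviness via Proposition~\ref{prop-supheavy-yields-heavy} --- but the paper first normalizes to $\alpha=0$ and then sandwiches $\zeta(H)$ between $\zeta(-|H|)=0$ and $\zeta(|H|)=0$ using the reformulation in Proposition~\ref{prop-def-heavy}; since $|H|$ need not be smooth, that version implicitly relies on the $C^0$-extension of $\zeta$ to continuous functions discussed at the start of Section~\ref{sec-basic-proofs}. You instead apply the defining inequalities \eqref{eq-heavy-0} and \eqref{eq-superheavy-0} directly to $H$ itself, which stays entirely within $C^\infty(M)$ and needs no normalization or extension. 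Nothing is missing from your argument.
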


\begin{proof} Since $\zeta(H+\alpha) = \zeta(H)+\alpha$ it suffices to
prove the proposition for $\alpha = 0$.  Take any function $H$
with $H|_X=0$. Since $X$ is superheavy and, by
Proposition~\ref{prop-supheavy-yields-heavy}, also heavy, we have
$$0=\zeta(-|H|)\leq \zeta(H)\leq \zeta(|H|)=0\;,$$
which yields $\zeta(H)=0$.
\end{proof}

\medskip
\noindent As an immediate consequence we get part (iii) of
Theorem~\ref{thm-intro-heavy}.

\begin{prop}\label{prop-intersec}
Every superheavy set intersects with every heavy set.
\end{prop}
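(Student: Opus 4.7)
The plan is to derive a contradiction from the existence of a superheavy set $X$ and a heavy set $Y$ with $X \cap Y = \emptyset$. Since both sets are closed in the compact manifold $M$, disjointness lets me separate them by a smooth bump function: I will choose $H \in C^\infty(M)$ with $H|_X \equiv 0$ and $H|_Y \equiv 1$ (for instance using a partition of unity subordinate to disjoint open neighborhoods of $X$ and $Y$).

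Having produced such an $H$, I will read off two incompatible values of $\zeta(H)$. On the one hand, Proposition~\ref{prop-superheavy-use} applies to the superheavy set $X$ with constant value $\alpha = 0$ and forces $\zeta(H) = 0$. On the other hand, the heaviness inequality \eqref{eq-heavy-0} applied to $Y$ gives $\zeta(H) \geq \inf_Y H = 1$. These contradict one another, so $X \cap Y$ cannot be empty.

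There is essentially no obstacle here: the whole content has been absorbed into the preceding propositions. The key step was establishing Proposition~\ref{prop-superheavy-use} (which in turn relied on Proposition~\ref{prop-supheavy-yields-heavy}, i.e.\ that superheavy implies heavy, so that $\zeta$ can be bounded both from above and below on functions vanishing on $X$). Once those are in hand, intersection is immediate from the trivial separation of disjoint closed sets by a smooth function.
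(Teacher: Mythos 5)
Your proof is correct and is essentially the paper's argument: both separate the disjoint closed sets by a function that is constant on each, then pin down $\zeta(H)$ via Proposition~\ref{prop-superheavy-use} on the superheavy set and bound it via the heaviness inequality on the other, reaching a contradiction. The only difference is the choice of constants (the paper uses $H\le 0$ with $H|_X\equiv -1$, $H|_Y\equiv 0$), which is immaterial.
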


\begin{proof}
Let $X$ be a superheavy set and $Y$ be a heavy set. Assume on the
contrary that $X \cap Y = \emptyset$. Take a function $H \leq 0$
with $H|_Y \equiv 0$ and $H|_X \equiv -1$. Then $\zeta(H)=-1$ by
Proposition~\ref{prop-superheavy-use}. On the other hand,
$\zeta(H)=0$ since $Y$ is heavy, and we get a contradiction.
\end{proof}

\medskip
\noindent Note that two heavy sets do not necessarily intersect
each other: a meridian of $\T^2$ is heavy (see
Corollary~\ref{cor-meridian-heavy} below), while two meridians can
be disjoint.

\medskip \noindent
{\bf Proof of Theorem~\ref{thm-diff-idempotents} (i) and (ii):}
The triangle inequality yields
$$c(a,H)=c(a*[M],0+H)\leq c(a,0) + c([M],H)= \nu (a) + c([M],H).$$
Passing to the partial quasi-states $\zeta (a,H)$ and $\zeta
([M],H)$ we get:
$$\zeta (a,H)= \lim_{k\to +\infty} c(a,kH)/k\leq
$$
$$
\leq \lim_{k\to +\infty} (\nu (a) + c([M],kH))/k =
 \lim_{k\to +\infty} c([M],kH)/k =
\zeta ([M],H).$$ The result now follows from the definition of
heavy and superheavy sets (see Definition~\ref{def-heavy}).\Qed

\medskip \noindent
{\bf Proof of Theorem~\ref{thm-diff-idempotents} (iii):} By the
characteristic exponent property of spectral invariants,
\begin{equation}
\label{eq-quant-semis} \zeta(a,F) \leq \max_{i=1,\ldots,l}
\zeta(e_i,F) \;\; \forall F \in C^{\infty}(M)\;.
\end{equation}

Choose a sequence of functions $G_j \in C^{\infty}(M)$, $j \to
+\infty$, with the following properties: $G_k \leq G_j$ for $k >
j$, $G_j=0$ on $X$, $G_j \leq 0$ and for every function $F\leq 0 $
which vanishes {\it on an open neighborhood} of $X$ there exists
$j$ so that $G_j \leq F$ (existence of such a sequence can be
checked easily). In view of inequality \eqref{eq-quant-semis}, we
have that for every $j$ there exists $i$ so that $\zeta(a,G_j)
\leq \zeta(e_i, G_j)$. Passing, if necessary, to a subsequence
$G_{j_k}, j_k \to +\infty$, we can assume without loss of
generality that $i$ is {\it the same} for all $j$. In view of
heaviness of $X$ with respect to $a$, we have that
$\zeta(a,G_j)=0$. Therefore $\zeta(e_i,G_j) \geq 0$.

Choose any function $F \leq 0$ on $M$ which vanishes {\it on an
open neighborhood} of $X$. Then there exists $j$ large enough so
that $F \geq G_j$. By monotonicity combined with the previous
estimate we have
$$0 \geq \zeta(e_i,F) \geq \zeta(e_i,G_j) \geq 0\;,$$
which yields $\zeta(e_i,F)=0$.

Now let $F$ be any continuous function on $M$ that vanishes {\it
on} $X$. Take a sequence of continuous functions $F_j$, converging
to $F$ in the $C^0$-norm, so that each $F_j$ vanishes on an open
neighborhood of $X$. Then $\zeta (e_i, F_j) = \lim_{j\to +\infty}
\zeta (e_i, F_j) = 0$, because $\zeta (e_i, \cdot)$ is Lipschitz
with respect to the $C^0$-norm. The heaviness of $X$ with respect
to $e_i$ now follows from Proposition~\ref{prop-def-heavy}. This
finishes the proof of the theorem. \Qed

\section{Products  of (super)heavy sets }\label{sec-proofs-prod}

In this section we prove Theorem~\ref{thm-products} on products of
(super)heavy subsets.

\subsection{Product formula for spectral invariants} The proof
of Theorem~\ref{thm-products} is based on the following general
result.

\begin{thm}\label{thm-spectral-sum}
For every pair of time-dependent Hamiltonians $G_1,G_2$ on $M_1$
and $M_2$, and all non-zero $a_1 \in QH_{i_1} (M_1)$, $a_2 \in
QH_{i_2} (M_2)$ we have
$$c(a_1 \otimes a_2, G_1(z_1,t)+G_2(z_2,t)) = c(a_1,G_1)+c(a_1,G_2)\;.$$
Here $G_1(z_1,t)+G_2(z_2,t)$ is a time-dependent Hamiltonian on
$M_1\times M_2$.
\end{thm}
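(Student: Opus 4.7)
The plan is to prove the two inequalities separately: ``$\le$'' from the triangle inequality for spectral invariants together with a reduction to one-factor Hamiltonians, and ``$\ge$'' from Poincar\'e duality. The crucial ingredient in both directions is the following \emph{reduction lemma}: if $G$ is a (time-dependent) Hamiltonian on $M_1$, viewed as a Hamiltonian on $M_1\times M_2$ depending only on the first factor, then $c(a\otimes [M_2],G)=c(a,G)$ for every non-zero $a\in QH_*(M_1)$ (and symmetrically for the second factor).

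\emph{Upper bound.} On $M_1\times M_2$ the pulled-back Hamiltonians $G_1(z_1,t)$ and $G_2(z_2,t)$ Poisson-commute; their time-$1$ flows therefore commute, and the time-$1$ flow of $G_1+G_2$ equals the composition of the individual ones. The K\"unneth ring embedding gives $(a_1\otimes [M_2])*([M_1]\otimes a_2) = a_1\otimes a_2$, so the triangle inequality on $\tHam(M_1\times M_2)$ yields
$$c(a_1\otimes a_2,\ G_1+G_2) \ \le\ c(a_1\otimes [M_2],\ G_1) + c([M_1]\otimes a_2,\ G_2).$$
The reduction lemma then replaces the right-hand side by $c(a_1,G_1)+c(a_2,G_2)$.

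\emph{Lower bound via Poincar\'e duality.} Fix $\varepsilon>0$. By Poincar\'e duality on each factor, choose $b_i\in\Upsilon(a_i)$ with $c(b_i,-G_i)\le -c(a_i,G_i)+\varepsilon/2$. Because the point class on $M_1\times M_2$ is the tensor product of the point classes on the factors and the K\"unneth map is a ring homomorphism, the Frobenius pairing factors on pure tensors as $\Omega_{M_1\times M_2}(b_1\otimes b_2,a_1\otimes a_2) = \Omega_{M_1}(b_1,a_1)\cdot \Omega_{M_2}(b_2,a_2)$. The condition $\Pi_i(b_i,a_i)\neq 0$ asserts that the constant term of $\Omega_{M_i}(b_i,a_i)\in \cK_{\Gamma_i}$ is non-zero, hence so is the constant term of the product; thus $b_1\otimes b_2\in\Upsilon(a_1\otimes a_2)$. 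Applying the already established upper bound with $-G_1,-G_2$ and the class $b_1\otimes b_2$ gives
$$c(b_1\otimes b_2,\ -G_1-G_2)\ \le\ c(b_1,-G_1)+c(b_2,-G_2)\ \le\ -c(a_1,G_1)-c(a_2,G_2)+\varepsilon.$$
Taking the infimum over $\Upsilon(a_1\otimes a_2)$, invoking Poincar\'e duality on $M_1\times M_2$, and sending $\varepsilon\to 0$ delivers the matching inequality.

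\emph{Main obstacle.} The heart of the argument is the reduction lemma; this is where the ``lengthy algebraic'' work sits. Viewed on $M_1\times M_2$, the Hamiltonian $G$ is completely degenerate in the $M_2$-direction, so one passes to a regular perturbation $\widetilde G := G + \varepsilon F$ with $F$ a $C^2$-small Morse function on $M_2$. The contractible $1$-periodic orbits of $\widetilde G$ split as pairs $(\gamma,p)$ with $\gamma$ a $1$-periodic orbit of $G$ and $p\in\mathrm{Crit}(F)$, with additively split action $\cA_G(\gamma,u) + \varepsilon F(p)$. For split almost complex structures one identifies the Floer complex of $\widetilde G$ over $\Lambda_{M_1\times M_2}$ with the filtered tensor product of the Floer complex of $G$ on $M_1$ and the Morse complex of $\varepsilon F$ on $M_2$; under PSS the class $a\otimes [M_2]$ corresponds to $\phi_{M_1}(a)\otimes [\max_{M_2}\varepsilon F]$, whose action level is precisely $c(a,G) + \varepsilon\max F$. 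Sending $\varepsilon\to 0$ and using the $C^0$-Lipschitz property of spectral invariants closes the loop. The genuine technical subtleties are keeping the Novikov rings compatible (the period groups add on the product so one must work over $\cK_{\Gamma_1+\Gamma_2}$), matching the Conley-Zehnder gradings across factors, and verifying that the K\"unneth identification of chain complexes is strictly filtered — these constitute the algebraic bookkeeping the paper announces.
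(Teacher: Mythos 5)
Your architecture -- triangle inequality for the upper bound, then Poincar\'e duality to bootstrap the lower bound from the upper bound applied to $-G_1,-G_2$ -- is a genuinely different route from the paper's, and an appealing one; the paper itself notes that ``$\le$'' is immediate from the triangle inequality but that the authors ``were unable to use this observation for proving the theorem.'' The reason is concentrated in your reduction lemma. The step where you declare that, under the filtered K\"unneth identification, the class $\phi_{M_1}(a)\otimes[\max]$ has ``action level precisely $c(a,G)+\varepsilon\max F$'' is the hard half of the theorem in disguise. The ``$\le$'' half of that assertion is clear (tensor a near-optimal representative of $a$ with the top Morse cycle), but the ``$\ge$'' half requires ruling out a representative of $a\otimes[M_2]$ of the form $\sum_j x_j\otimes y_j$ whose filtration level $\max_j\bigl(F_1(x_j)+F_2(y_j)\bigr)$ drops below $c(a,G)+\varepsilon\max F$ because of cancellations among cross terms. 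That is exactly what the paper's Theorem~\ref{thm-algebra} establishes, via normal systems and a ``spectral basis'' adapted to the differential, giving the explicit formula $c(a)=\max_i F(\lambda_i h_i)$ from which both inequalities follow at once. So your reduction lemma is not a lighter special case: proving it honestly reproduces the paper's algebraic core, and once you have that core the Poincar\'e duality detour is no longer needed. As written, the lemma is asserted rather than proved, and this is the essential gap.

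A second, repairable error: the claim that $\Pi(b_1,a_1)\neq0$ and $\Pi(b_2,a_2)\neq0$ imply $b_1\otimes b_2\in\Upsilon(a_1\otimes a_2)$ is false, because $\Pi=\tau\Omega$ extracts the $s^0$-coefficient and non-vanishing of free terms is not preserved under multiplication in $\cK$: with $\Gamma=\Z$ one has $(s+1)(1-s^{-1})=s-s^{-1}$, whose free term vanishes although both factors have free term $1$. The fix is to note that $\Pi(b_i,a_i)\neq0$ forces $\theta_i:=\nu\bigl(\Omega(b_i,a_i)\bigr)\ge0$, so $b':=s^{-\theta_1-\theta_2}\,b_1\otimes b_2$ does lie in $\Upsilon(a_1\otimes a_2)$, while the quantum homology shift property gives $c(b',\cdot)=c(b_1\otimes b_2,\cdot)-\theta_1-\theta_2\le c(b_1\otimes b_2,\cdot)$, so your estimate survives. (You should also take the $b_i$ degree-homogeneous so that $\Omega(b_1\otimes b_2,a_1\otimes a_2)=\Omega(b_1,a_1)\,\Omega(b_2,a_2)$ holds without cross terms in $q$.) This repair, however, does not touch the first gap.
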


\medskip
\noindent Let us deduce Theorem~\ref{thm-products} from
Theorem~\ref{thm-spectral-sum}.

\medskip
\noindent {\bf Proof of Theorem~\ref{thm-products}:} We show that
the product of superheavy sets is superheavy (the proof for heavy
sets goes without any changes). We denote by $\zeta_1,\zeta_2$ and
$\zeta$ the partial quasi-states on $M_1,M_2$ and $M:=M_1\times
M_2$ associated to the idempotents $a_1,a_2$ and $a_1 \otimes a_2$
respectively. Let $X_i \subset M_i$, $i=1,2$, be a superheavy set.
By Proposition~\ref{prop-def-heavy} it suffices to show that if a
non-negative function $G \in C^{\infty}(M)$ vanishes on some
neighborhood, say $U$, of $X:=X_1 \times X_2$ then $\zeta(G)=0$.
(Since $\zeta$ is Lipschitz with respect to the $C^0$-norm this
would imply that $\zeta(G)=0$ for any non-negative $G\in C(M)$
that vanishes {\it on} $X$). Put $K:=\max_M G$. Choose
neighborhoods $U_i$ of $X_i$ so that $U_1 \times U_2 \subset U$.
Choose non-negative functions $G_i$ on $M_i$ which vanish on $X_i$
and such that $G_i(z)
> K$ for all $z \in M_i \setminus U_i$. Observe that $G \leq G_1
+G_2$. But, in view of Theorem~\ref{thm-spectral-sum} and
superheaviness of $X_i$, we have
$$\zeta(G_1 +G_2) = \zeta_1(G_1)+\zeta_2(G_2) = 0\;.$$
By monotonicity $$0 \leq \zeta(G) \leq \zeta(G_1 +G_2)=0\;,$$ and
thus $\zeta(G)=0$.\qed

\medskip
\noindent It remains to prove Theorem~\ref{thm-spectral-sum}. Note
that the left-hand side of the equality stated in the theorem does
not exceed the right-hand side: this is an immediate consequence
of the triangle inequality for spectral invariants. However, we
were unable to use this observation for proving the theorem. Our
approach is based on a rather lengthy algebraic analysis which
enables us to calculate separately the left and the right-hand
sides  ``on the chain level". A simple inspection of the results
of this calculation yields the desired equality.

\subsection{Decorated $\Z_2$-graded complexes}

 A {\it $\Z_2$-complex} is a $\Z_2$-graded
finite-dimensional vector space $V$ over a field $\cK$ equipped
with a $\cK$-linear differential $\partial: V\to V$ satisfying
$\partial^2=0$ and shifting the grading. {\it A decorated complex}
over $\cK=\cK_\Gamma$ includes the following data:
\begin{itemize}
\item a countable subgroup $\Gamma \subset \R$; \item a
$\Z_2$-graded complex $(V, d)$ over $\cK_{\Gamma}$; \item a
preferred basis $x_1,\ldots,x_n$ of $V$; \item a function
$F:\{x_1,\ldots,x_n\} \to \R$ (called {\it the filter}) which
extends to $V$ by
 $$F(\sum \lambda_j x_j)= \max \{\nu(\lambda_j) +F(x_j)
\;\Big{|}\; \lambda_j \neq 0\},$$ and satisfies $F(dv) < F(v)$ for
all $v \in V \setminus \{0\}$. The convention is that
$F(0)=-\infty$. Here $\nu$ is the valuation defined in Section
\ref{subsec-valuation} above.
\end{itemize}

\medskip
\noindent We shall use the notation $${\bf V}:=
(V,\{x_i\}_{i=1,\ldots,n},F,d,\Gamma)$$ for a decorated complex.

\medskip
\noindent The {\it $\widehat{\otimes}_{\cK}$-tensor product} ${\bf
V}= {\bf V_1}\widehat{\otimes}_{\cK} {\bf V_2}$ of decorated
complexes
$${\bf V_i}= (V_i, \{x^{(i)}_j\}_{j=1,\ldots,n_i},F_i,d_i,\Gamma_i)\;,\;i=1,2$$
is defined as follows. Consider the  space $V = V_1
\widehat{\otimes}_{\cK} V_2$ (see formula \eqref{eq-tensor-hat}
above) with the natural $\Z_2$-grading. Define the differential
$d$ on $V$ by $$d(x\otimes y) = d_1x \otimes y + (-1)^{\deg x}
x\otimes d_2y\;.$$ The preferred basis in $V$ is given by
$\{x_{pq}:= x^{(1)}_p \otimes x^{(2)}_q\}$ and the filter $F$ is
defined by
$$F(x_{pq})=F_1(x^{(1)}_p)+F_2(x^{(2)}_q).$$ Finally, we put
${\bf V}:= (V,\{x_{pq}\},F,d,\Gamma_1+\Gamma_2)\;.$

\medskip
\noindent The ($\Z_2$-graded) homology of decorated complexes are
denoted by $H_* ({\bf V})$ -- they are $\cK$-vector spaces. By the
K\"{u}nneth formula, $H({\bf V_1} \widehat{\otimes}_\cK  {\bf
V_2})= H({\bf V_1}) \widehat{\otimes}_\cK H({\bf V_2})$.

\medskip
\noindent Next we define {\it spectral invariants} associated to a
decorated complex ${\bf V}:= (V,\{x_{pq}\},F,d)\;.$ Namely, for $a
\in H ({\bf V})$ put
$$c(a):=\inf\{F(v)\;|\; a=[v], v \in \text{Ker}\, d\}\;.$$
We shall see below that $c(a) > -\infty$ for each $a \neq 0$.

\medskip
\noindent The purpose of this algebraic digression is to state the
following result:

\medskip
\noindent\begin{thm}\label{thm-algebra} For any two decorated
complexes ${\bf V_1},{\bf V_2}$
$$c(a_1 \otimes a_2) = c(a_1)+c(a_2) \;\; \forall a_1 \in H({\bf V_1}),a_2 \in H({\bf V_2})\;$$
\end{thm}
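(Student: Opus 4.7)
The plan is to prove the two inequalities separately; the $\leq$ direction is a direct construction, while $\geq$ is the substantive part and will be reduced to a normal-form statement for filtered complexes. First, for $c(a_1 \otimes a_2) \leq c(a_1) + c(a_2)$, I would pick $\varepsilon$-optimal cycle representatives $v_i \in \operatorname{Ker} d_i$ of $a_i$ with $F_i(v_i) \leq c(a_i) + \varepsilon$ and use $v_1 \otimes v_2$ as a representative of $a_1 \otimes a_2$ (it is closed by the graded Leibniz rule). The key identity $F(v_1 \otimes v_2) = F_1(v_1) + F_2(v_2)$ is a one-line computation: on the monomial basis $F(x_p^{(1)} \otimes x_q^{(2)}) = F_1(x_p^{(1)}) + F_2(x_q^{(2)})$ by definition, and since $\nu$ is a valuation the double maximum over products of scalar coefficients factors, $\max_{p,q}(\nu(\lambda_p) + \nu(\mu_q) + F_1 + F_2) = \max_p(\nu(\lambda_p) + F_1) + \max_q(\nu(\mu_q) + F_2)$.

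For the reverse inequality I would put each complex into a \emph{Barannikov (persistence) normal form}: a $\cK$-basis of $V$ that splits as a disjoint union of ``essential'' generators $\{e_\alpha\}$ with $d e_\alpha = 0$ and ``acyclic pairs'' $\{u_\beta, v_\beta\}$ with $d u_\beta = v_\beta$. The existence is proved by a descending induction on $\max_v F(v)$: among basis elements whose image under $d$ is nonzero, pick one, say $u$, of maximal $F(u)$; set $v := du$, normalize, and then subtract suitable multiples of $u$ and $v$ from the other basis elements to split the two-dimensional span of $\{u, v\}$ off as a direct summand. Crucially, because $F(dw) < F(w)$ strictly, these subtractions do not raise any filter value, so the resulting basis is still a \emph{decorated-complex basis} with the same sublevel filtration. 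In this normal form the classes $\{[e_\alpha]\}$ are a $\cK$-basis of $H(\mathbf{V})$ and one shows $c([e_\alpha]) = F(e_\alpha)$: any other representative $e_\alpha + dw$ of the same class cannot have smaller filter, because the $u_\beta$-components of $dw$ live in an independent summand and the strict drop $F(dw) < F(w)$ rules out cancellation against $e_\alpha$.

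Now take $\mathbf{V}_i$ in normal form with essentials $\{e_\alpha^{(i)}\}$ and acyclic pairs $\{u_\beta^{(i)}, v_\beta^{(i)}\}$. The tensor product $\mathbf{V}_1 \widehat{\otimes}_\cK \mathbf{V}_2$ decomposes into four kinds of blocks: $e^{(1)} \otimes e^{(2)}$ (essential), $e^{(1)} \otimes \{u, v\}^{(2)}$ and $\{u, v\}^{(1)} \otimes e^{(2)}$ (each an acyclic pair with partner defined by $d$), and the four-dimensional acyclic block $\{u,v\}^{(1)} \otimes \{u,v\}^{(2)}$, in which an elementary change of basis (using the multiplicativity $F(x \otimes y) = F_1(x)+F_2(y)$ that keeps filter values stable) splits it into two Barannikov pairs. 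Hence $\mathbf{V}_1 \widehat{\otimes}_\cK \mathbf{V}_2$ is again in Barannikov normal form with essential generators $\{e_\alpha^{(1)} \otimes e_\gamma^{(2)}\}$ and filter value $F_1(e_\alpha^{(1)}) + F_2(e_\gamma^{(2)})$ on each. Applying the identity $c([e]) = F(e)$ to the class $a_1 \otimes a_2 = \sum \lambda_\alpha \mu_\gamma [e_\alpha^{(1)} \otimes e_\gamma^{(2)}]$ and using that $c$ is the maximum over monomials of the sum of $\nu$-valuations and essential-generator filters yields $c(a_1 \otimes a_2) = c(a_1) + c(a_2)$.

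The main obstacle is the careful execution of the normal-form reduction over the non-Archimedean field $\cK_\Gamma$, in particular verifying that the inductive step preserves the condition $F(dw)<F(w)$ and that $c([e_\alpha])$ is genuinely attained (so that the infimum in the definition of $c$ is a minimum). Two technical points deserve extra care: first, the finite-dimensionality of $V$ over $\cK_\Gamma$ is essential to the induction but the coefficients may involve arbitrarily small $\nu$-values, so the ``maximal $F(u)$'' must be interpreted in terms of basis filter values, not a sup over all of $V$; second, the four-dimensional acyclic block at the tensor level forces a sign-dependent change of basis because of the graded Leibniz rule, but the filter is insensitive to signs, so the normal form survives. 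Once these are handled, the inequality $c(a_1 \otimes a_2) \geq c(a_1) + c(a_2)$ drops out and combines with Step~1 to give the theorem.
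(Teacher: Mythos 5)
Your overall architecture coincides with the paper's: both arguments construct, for each factor, a filtration-adapted basis compatible with the flag $\mathrm{Im}\,d\subset\mathrm{Ker}\,d\subset V$ (your Barannikov pairs versus the paper's ``spectral basis'' $x_1,\ldots,x_q,g_1,\ldots,g_q,h_1,\ldots,h_p$), observe that the filter is additive on decomposable tensors, note that $\mathrm{Im}\,d$ of the product is supported away from the tensor products of essential generators, and conclude by factoring a double maximum. Your $\leq$ direction and the identity $F(v_1\otimes v_2)=F_1(v_1)+F_2(v_2)$ are fine.

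The gap is in the sentence ``because $F(dw)<F(w)$ strictly, these subtractions do not raise any filter value, so the resulting basis is still a decorated-complex basis with the same sublevel filtration.'' The implication is false: preserving the filter values of individual basis vectors does not imply that the new basis computes $F$ by the max formula. For instance, with $F(x_1)=1$, $F(x_2)=0$, the vectors $y_1=x_1+x_2$, $y_2=x_1-x_2$ are each obtained from $x_1$ by adding a term of strictly smaller filter value, yet $F(y_1-y_2)=F(2x_2)=0<1=\max(F(y_1),F(y_2))$. What must actually be verified is that the new system is \emph{normal} in the paper's sense --- distinct leading preferred-basis indices, each with strictly dominant leading term --- and that normal systems compute $F$ (the paper's Lemma~\ref{lem-alg-1}). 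This is the real content of the proof, and it is exactly where the non-Archimedean ties bite: when $F(x_i)-F(x_j)\in\Gamma$ the leading index of $v=du$ need not be unique, your ``normalize'' step is not well defined, and Lemma~\ref{lem-alg-1} is not obvious. The paper resolves this by first assuming genericity ($F(x_i)-F(x_j)\notin\Gamma$ for $i\neq j$, and ``general position'' for the pair of complexes), proving the theorem there, and then removing the assumption by a perturbation of the filters combined with the Lipschitz dependence of $c(a,F)$ on $F$ (Lemma~\ref{lem-alg-perturb}). Your sketch needs either this genericity-plus-perturbation device or a genuinely tie-robust construction of the normal form; as written, the inductive step is not justified.
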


\subsection{Reduced Floer and Quantum homology}

The $2$-periodicity of the Floer complex and Floer homology
defined by the multiplication by $q$ (see Proposition
\ref{prop-basic-alg-properties-of-Floer-complex-Floer-homology}
above) allows to encode their algebraic structure in a decorated
$\Z_2$-complex. Consider a regular pair $(G,J)$ consisting of a
Hamiltonian function and a compatible almost-complex structure on
$M$ (both, in general, are time-dependent). Let $(C_*(G),d_{G,J})$
be the corresponding Floer complex. Let us associate to it a
$\Z_2$-complex: a $\Z_2$-graded vector space $V_{G}$ over
$\cK_{\Gamma}$, defined as
$$V_{G}:= C_0(G)\oplus C_1 (G),$$ with the obvious $\Z_2$-grading,
and a differential $\partial_{G,J}: V_G\to V_G$, defined as the
direct sum of $d_{G,J}: C_1 (G)\to C_0 (G)$ and $qd_{G,J}: C_0
(G)\to C_1 (G)$. One readily checks that this is indeed a
$\Z_2$-complex because $d_{G,J}: C(G)\to C(G)$ is
$\Lambda_{\Gamma}$-linear. We will call $(V_G,
\partial_{G,J})$ the {\it $\Z_2$-complex associated to $(G,J)$}.

Note that the cycles and the boundaries of $(V_G,
\partial_G)$ having $\Z_2$-degree $i \in \{0,1\}$ in $V_G$ coincide,
respectively, with the cycles and the boundaries having
$\Z$-degree $i$ of $(C(G),d_{G,J})$. Therefore the Floer homology
$HF_i (G,J)$ is isomorphic, as a vector space over $\cK_{\Gamma}$,
to the $i$-th degree component of the homology of the complex
$(V_G, \partial_{G,J})$.

The $\Z_2$-complex $(V_G,
\partial_{G,J})$ carries a structure of the decorated complex ${\bf V}_{G,J}$ as
follows. Let $\gamma_i(t), i = 1,\ldots,m$, be the collection of
all contractible $1$-periodic orbits of the Hamiltonian flow
generated by $G$. Choose disc $u_i$ in $M$ spanning $\gamma_i$.
For each $i$ there exists unique integer, say $r_i$, so that the
Conley-Zehnder index of the element $x_i:= q^{r_i}\cdot [\gamma_i,
u_i]$ lies in the set $\{0,1\}$. Clearly, the collection $\{x_i\}$
forms a basis of $V_G$ over $\cK_{\Gamma}$. We shall consider it
as a preferred basis. Note that the preferred basis is unique up
to multiplication of $x_i$'s by elements of the form
$s^{\alpha_i}, \alpha_i \in \Gamma$. Finally, the action
functional associated to $G$ defines a filtration on $V_G$.

The homology of $(V_G, \partial_{G,J})$ can be canonically
identified via the PSS-isomorphism with the object which we call
{\it reduced} quantum homology: $$QH_{red}(M):= QH_0(M)\oplus
QH_1(M)\;.$$ We call this isomorphism {\it the reduced}
PSS-isomorphism and denote it by $\psi_{G,J}$.

Note that we have a natural projection $p: QH_*(M) \to
QH_{red}(M)$ which sends any degree homogeneous element $a$ to
$aq^r$ with $\text{deg}\;a + 2r \in \{0,1\}$. With this notation,
the usual Floer-homological spectral invariant $c(a,G)$ coincides
with the spectral invariant $c(p(a))$ of the decorated complex
${\bf V}_{G,J}$.

\subsection{Proof of Theorem~\ref{thm-spectral-sum}}
 By the Lipschitz property of spectral numbers it
is enough to consider the case when $G_1$ and $G_2$ belong to
regular pairs $(G_i, J_i)$, $i=1,2$. Set $$G (z_1,z_2,t) := G_1
(z_1,t) + G (z_2,t)$$ and $J := J_1\times J_2$. Then $(G,J)$ is
also a regular pair. Put $\Gamma_i = \Gamma(M_i,\omega_i)$. It is
straightforward to see that the decorated complex ${\bf V}_{G,J}$
is the $\widehat{\otimes}_{\cK}$-tensor product of the decorated
complexes ${\bf V}_{G_i,J_i}$ for $i=1,2$.

Put $(M,\omega) = (M_1 \times M_2, \omega_1 \oplus \omega_2)$. An
obvious modification of  the K\"unneth formula for quantum
homology (see e.g. \cite[Exercise 11.1.15]{MS2} for the statement
in the monotone case) yields a natural monomorphism
$$\imath: QH_{i_1}(M_1,\omega_1)  \widehat{\otimes}_{\cK}
QH_{i_2}(M_1,\omega_1)\to QH_{i_1+i_2}(M,\omega)\;.$$ Since in our
setting quantum homologies are $2$-periodic, the collection of
these isomorphisms for all pairs $(i_1,i_2)$ from the set
$\{0,1\}$ induces an isomorphism
$$j: QH_{red}(M_1)\widehat{\otimes}_\cK QH_{red}(M_2)\to
QH_{red}(M)\;.$$ It has the following properties: First, given two
elements $a_1 \in QH_{i_1}(M_1,\omega_1)$ and $a_2 \in
QH_{i_2}(M_2,\omega_2)$ we have that
$$p(a_1) \otimes p(a_2) = p(a_1 \otimes a_2)\;.$$
Second, the following diagram commutes:
\[
\xymatrix{ H(V_{G_1},\partial_{G_1,J_1}) \widehat{\otimes}_\cK
H(V_{G_2},\partial_{G_2,J_2}) \ar[r]^-{k} \ar[d]^{\psi_{G_1,J_1}
\otimes \psi_{G_2,J_2}} & H(V_{G},\partial_{G,J})
\ar[d]^{\psi_{G,J}}
 \\
QH_{red}(M_1)\widehat{\otimes}_\cK QH_{red}(M_2) \ar[r]^-{j} &
QH_{red}(M) }
\]
Here $k$ is the isomorphism coming from the K\"unneth formula for
$\Z_2$-com\-ple\-xes, and $\psi_{G_i,J_i},\psi_{G,J}$ stand for
the reduced PSS-isomorphisms. It follows that the definition of $c
(a_i, G_i)$, $c (a_1\otimes a_2, G)$ matches the definition  of $c
(p(a_i))$ and $c (p(a_1)\otimes p(a_2))$. By
Theorem~\ref{thm-algebra} we get that
$$c(a_1 \otimes a_2, G) = c (p(a_1)\otimes p(a_2))= c
(p(a_1))+c(p(a_2))= c(a_1,G_1) + c(a_2,G_2)\;.$$ This proves
Theorem~\ref{thm-spectral-sum} modulo Theorem~\ref{thm-algebra}.
\qed

\subsection{Proof of algebraic
Theorem~\ref{thm-algebra}
} A decorated complex is called {\it
generic} if $F(x_i)-F(x_j) \notin \Gamma$ for all $i \neq j$
(recall that under our assumptions $\Gamma$, the group of periods
of the symplectic form $\omega$ over $\pi_2(M)$, is a countable
subgroup of $\R$). We start from some auxiliary facts from linear
algebra. Let ${\bf V}:= (V,\{x_i\}_{i=1,\ldots,n},F,d,\Gamma)$ be
a generic decorated complex. We recall once again that for brevity
we write $\cK$ instead of $\cK_\Gamma$ wherever it is clear what
$\Gamma$ is taken.

\medskip
\noindent  An element $x \in V$ is called {\it normalized} if
$$x = x_p + \sum_{i \neq p}\lambda_ix_i\;, \lambda_i \in \cK,\; F(x_p) > \max_{i\neq p} F(\lambda_ix_i)\;.$$
We shall use the notation $x = x_p +o(x_p)$. In generic complexes,
every element $x\neq 0$ can be uniquely written as $x = \lambda
(x_p +o(x_p))$ for some $p=1,\ldots,n$ and $\lambda \in \cK$. A
system of vectors $e_1,\ldots,e_m$ in $V$ is called {\it normal}
if every $e_i$ has the form $e_i = x_{j_i} + o(x_{j_i})$ for $j_i
\in \{1,\ldots,n\}$ and the numbers $j_i$ are pair-wise distinct.

\medskip
\noindent
\begin{lemma}\label{lem-alg-1} Let $e_1,\ldots,e_m$ be a normal system. Then
$$F(\sum_{i=1}^n \lambda_i e_i)= \max_i F(\lambda_i e_i)\;.$$
\end{lemma}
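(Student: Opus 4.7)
The plan is to prove the identity by establishing two matching inequalities. The upper bound $F(\sum_i \lambda_i e_i) \leq \max_i F(\lambda_i e_i)$ is formal: writing each $e_i$ in the preferred basis and using the ultrametric property $\nu(a+b) \leq \max(\nu(a),\nu(b))$ coefficient-wise, one reads off the bound directly from the definition of $F$. The content of the lemma therefore lies entirely in the reverse inequality.

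For the lower bound, I would set $M := \max_i F(\lambda_i e_i)$ and fix any index $i$ achieving this maximum. Since the normality of $e_i = x_{j_i} + o(x_{j_i})$ gives $F(e_i) = F(x_{j_i})$, this pins down $\nu(\lambda_i) = M - F(x_{j_i})$. Expanding each $e_{i'} = x_{j_{i'}} + \sum_{k \neq j_{i'}}\mu_{i',k}\,x_k$, the normalization condition becomes $\nu(\mu_{i',k}) < F(x_{j_{i'}}) - F(x_k)$ for every $k \neq j_{i'}$. I would then isolate the coefficient of $x_{j_i}$ in $v := \sum_{i'}\lambda_{i'}e_{i'}$, namely
$$\alpha_{j_i} \;=\; \lambda_i \;+\; \sum_{i' \neq i}\lambda_{i'}\mu_{i',j_i}.$$
Here the crucial observation is that the indices $j_{i'}$ are pairwise distinct, so for every $i' \neq i$ the quantity $\mu_{i',j_i}$ genuinely is one of the  ``small'' coefficients of $e_{i'}$, and the normalization bound applies.

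A short valuation computation then yields, for each $i' \neq i$,
$$\nu(\lambda_{i'}\mu_{i',j_i}) \;<\; \nu(\lambda_{i'}) + F(x_{j_{i'}}) - F(x_{j_i}) \;=\; F(\lambda_{i'}e_{i'}) - F(x_{j_i}) \;\leq\; M - F(x_{j_i}) \;=\; \nu(\lambda_i).$$
Since the valuation of the tail $\sum_{i'\neq i}\lambda_{i'}\mu_{i',j_i}$ is bounded above by the maximum of its summands' valuations, it is strictly smaller than $\nu(\lambda_i)$. The strict non-archimedean inequality then forces $\nu(\alpha_{j_i}) = \nu(\lambda_i)$ exactly, so that the basis vector $x_{j_i}$ contributes $\nu(\lambda_i) + F(x_{j_i}) = M$ to $F(v)$, giving $F(v) \geq M$ and completing the argument.

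The only mildly delicate step is the last one: recognizing that a single summand in the coefficient of $x_{j_i}$ strictly dominates all the others in valuation, and therefore survives the ultrametric sum. This step rests entirely on the normality condition (pairwise distinctness of the $j_i$'s); genericity of $\mathbf{V}$ plays no active role in the proof itself, and only serves to ensure in the background that the maximum $M$ is in fact attained at a unique $i$.
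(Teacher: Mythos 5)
Your proof is correct, but it follows a genuinely different route from the paper's. The paper argues by induction on $m$: after reducing the induction step to the single inequality $F(e_1+\sum_{i\geq 2}\lambda_i e_i)\geq F(e_1)$, it collapses the tail $\sum_{i\geq 2}\lambda_i e_i$ into one normalized vector $v=x_2+o(x_2)$ (using genericity to identify a unique maximizer) and then runs a two-variable case analysis on the quantities $\alpha,\beta$ and on whether $\nu(1+\lambda_2\beta)\geq 0$. You instead give a direct, non-inductive argument: fix a maximizing index $i$, isolate the coefficient of $x_{j_i}$ in $v=\sum\lambda_{i'}e_{i'}$, and observe that the cross-terms $\lambda_{i'}\mu_{i',j_i}$ all have valuation strictly below $\nu(\lambda_i)$ because the strictness is supplied by the normalization of each $e_{i'}$ (via $\nu(\mu_{i',j_i})<F(x_{j_{i'}})-F(x_{j_i})$) rather than by any strict comparison among the $F(\lambda_{i'}e_{i'})$. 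Your computation checks out, including the identity $F(\lambda_{i'}e_{i'})=\nu(\lambda_{i'})+F(x_{j_{i'}})$ and the final ultrametric step. Your closing remark is also accurate and is a mild strengthening of the paper's statement: your argument never uses genericity (it tolerates several indices attaining the maximum $M$), whereas the paper's inductive step explicitly invokes genericity to single out $i_0$; genericity remains necessary elsewhere in the paper's algebraic section (e.g.\ in the proof of the normal-basis lemma), but not for this particular identity. The net effect is a shorter and slightly more general proof of the same statement.
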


\begin{proof} We prove the result using induction in $m$. For $m=1$ the statement is
obvious. Let's check the induction step $m-1 \to m$. Observe that
it suffices to check that
\begin{equation}\label{eq-alg-1}
F(e_1 + \sum_{i=2}^n \lambda_i e_i )\geq F(e_1)\;.
\end{equation}
Then obviously
$$F(\sum_{i=1}^n \lambda_i e_i)\geq \max_i F(\lambda_i e_i)\;,$$
while the reversed inequality is an immediate consequence of the
definitions.

By the induction step,
$$F(\sum_{i=2}^n \lambda_i e_i)= \max_{i=2,\ldots,n} F(\lambda_i e_i)\;.$$
In view of the genericity, the maximum at the right hand side can
be uniquely written as $F(\lambda_{i_0} x_{i_0})$. Without loss of
generality we shall assume that $e_i = x_{i} + o(x_{i})$ and $i_0
= 2$.

Put
$$v = \sum_{i \geq 2} \lambda_2^{-1}\lambda_i e_i = x_2 + o(x_2)\;.$$
Write
$$e_1 = x_1 + \alpha x_2 + X,\; v = x_2 + \beta x_1 + Y,\;\;$$
where $\alpha,\beta \in \cK$ and $X,Y \in \text{Span}_\cK
(x_3,\ldots,x_n)$. Note that $F(x_1) > F(\alpha x_2)$, $F(x_2) >
F(\beta x_1)$, which yields
\begin{equation}\label{eq-alg-2}
\nu(\alpha) < F(x_1)-F(x_2) < -\nu(\beta)=\nu(\beta^{-1})\;.
\end{equation}
In particular, $\nu(\alpha) < \nu(\beta^{-1})$. Note that
$$e_1+\lambda_2v = (1+\lambda_2\beta)x_1+ (\alpha+\lambda_2)x_2 +Z,\; Z \in \text{Span}_\cK (x_3,\ldots,x_n)\;.$$
Thus
$$F(e_1+\lambda_2v) \geq \max(\nu(1+\lambda_2\beta)+F(x_1), \nu(\alpha+\lambda_2)+F(x_2))\;.$$
If $\nu(1+\lambda_2\beta)\geq 0$ we have $F(e_1+\lambda_2v)\geq
F(x_1)=F(e_1)$ and inequality \eqref{eq-alg-1} follows. Assume
that $\nu(1+\lambda_2\beta)<0 = \nu(1)$. Then
$\nu(\lambda_2\beta)=0 = \nu(\lambda_2)+\nu(\beta)$, and hence
$\nu(\lambda_2)=\nu(\beta^{-1}) \neq \nu(\alpha)$. Thus
$$\nu(\alpha + \lambda_2) \geq \nu(\lambda_2) =-\nu(\beta)\;.$$
Combining this inequality with \eqref{eq-alg-2} we get that
$$F(e_1+\lambda_2v) \geq \nu(\alpha+\lambda_2) + F(x_1) + (F(x_2)-F(x_1)) $$
$$\geq F(x_1) + (\nu(\alpha+\lambda_2) + \nu(\beta)) \geq F(x_1)=F(e_1)\;.$$
This completes the proof of inequality \eqref{eq-alg-1}, and hence
of the lemma.
\end{proof}

\medskip
\noindent It readily follows from the lemma that every normal
system is linearly independent.

\begin{lemma} \label{lem-alg-3}
Every subspace $L \subset V$ has a normal basis.
\end{lemma}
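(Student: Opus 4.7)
My plan is to prove Lemma~\ref{lem-alg-3} by induction on $m := \dim_{\cK} L$. The base case $m=0$ is trivial (the empty system is vacuously normal). For the inductive step, the idea is to peel off one normal vector at a time and apply induction to a codimension-one subspace.

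Fix any nonzero $v \in L$. By genericity, there is a unique pair $(p_1, \lambda) \in \{1,\ldots,n\} \times \cK^\times$ with $v = \lambda(x_{p_1} + o(x_{p_1}))$; concretely, $p_1$ is the unique index at which the quantity $\nu(\mu_i) + F(x_i)$ attains its maximum when $v = \sum_i \mu_i x_i$ (uniqueness is forced by the condition $F(x_i) - F(x_j) \notin \Gamma$ for $i \neq j$). Set $e_1 := \lambda^{-1} v = x_{p_1} + o(x_{p_1})$, which lies in $L$.

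Next, I would consider the $\cK$-linear functional $\pi_{p_1} : V \to \cK$ extracting the coefficient of $x_{p_1}$, and put $L' := \ker(\pi_{p_1}|_L)$. Since $\pi_{p_1}(e_1) = 1$, the restriction of $\pi_{p_1}$ to $L$ is surjective, so $\dim_{\cK} L' = m-1$ and $L = L' \oplus \cK e_1$. By the inductive hypothesis applied to $L'$, there is a normal system $e_2, \ldots, e_m$ forming a basis of $L'$, with $e_i = x_{p_i} + o(x_{p_i})$ and the indices $p_2, \ldots, p_m$ pairwise distinct.

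The key remaining observation is that $p_i \neq p_1$ for $i \geq 2$: if the leading index of $e_i$ were $p_1$, then by the normalization $e_i = x_{p_1} + o(x_{p_1})$ we would have $\pi_{p_1}(e_i) = 1 \neq 0$, contradicting $e_i \in L' = \ker(\pi_{p_1}|_L)$. Hence $p_1, p_2, \ldots, p_m$ are pairwise distinct and $e_1, \ldots, e_m$ is a normal system; since $L = L' \oplus \cK e_1$ it is a basis. I do not anticipate any real obstacle here: the only delicate point is invoking genericity to make the leading index of a nonzero vector well-defined, and this is exactly the place where the hypothesis $F(x_i) - F(x_j) \notin \Gamma$ is used.
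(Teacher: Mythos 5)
Your proof is correct, and it takes a genuinely different route from the paper's. The paper also inducts on $\dim_{\cK} L$, but in the opposite direction: it assumes a normal basis $e_1,\dots,e_{m-1}$ of a subspace $L'$ is given and extends it by one vector to a normal basis of $L=\mathrm{Span}_{\cK}(L'\cup\{v\})$. To do so it must show that $L'$ is transverse to the span $W$ of the preferred basis vectors not occurring as leading terms of the $e_i$; that step invokes Lemma~\ref{lem-alg-1} together with genericity, after which $v$ is corrected by its $L'$-component so that the remainder lies in $W$ and its normalization has a fresh leading index. Your argument instead peels off one vector: normalize an arbitrary nonzero $v\in L$ to $e_1=x_{p_1}+o(x_{p_1})$, pass to the hyperplane $L'=\ker(\pi_{p_1}|_L)$, and observe that no normalized vector in $\ker\pi_{p_1}$ can have leading index $p_1$, since normalization forces the $x_{p_1}$-coefficient to equal $1$. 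This is more elementary -- it bypasses Lemma~\ref{lem-alg-1} entirely and uses genericity only to make the leading index of a nonzero vector well defined. What the paper's bottom-up version buys in exchange is the extension property recorded immediately after the lemma (every normal basis of $L_1\subset L_2$ extends to a normal basis of $L_2$), which is what is actually needed later to build the spectral basis $g_1,\dots,g_q,h_1,\dots,h_p$ of $\ker d$ containing a prescribed normal basis of $\mathrm{Im}\, d$; your top-down construction proves the lemma as stated but does not immediately yield that refinement.
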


\begin{proof} We use induction over $m = \dim_\cK L$.
The case $m=1$ is obvious, so let us handle the induction step
$m-1 \to m$. It suffices to show the following: Let
$e_1,\ldots,e_{m-1}$ be a normal basis in a subspace $L'$, and let
$v \notin L'$ be any vector. Put $L = \text{Span}_\cK (L' \cup
\{v\})$. Then there exists $e_m \in L$ so that $e_1,\ldots,e_m$ is
a normal basis. Indeed, assume without loss of generality that for
all $i =1,\ldots,m-1$ one has $e_i = x_i + o(x_i)$. Put $W =
\text{Span}_\cK (x_m,\ldots,x_n)$. We claim that $L' \cap W =
\{0\}$. Indeed, otherwise
$$\lambda_1 e_1 +\ldots +\lambda_{m-1} e_{m-1} = \lambda_m x_m +\ldots+\lambda_n x_n$$
where the linear combinations in the right and the left-hand sides
are non-trivial. Apply $F$ to both sides of this equality. By
Lemma~\ref{lem-alg-1}
$$F(\lambda_1 e_1 +\ldots +\lambda_{m-1} e_{m-1})= F(x_p)\; \mod \; \Gamma ,\;\; \text{where}\;\; 1\leq p \leq m-1\;,$$
while
$$F(\lambda_m x_m +\ldots+\lambda_n x_n) = F(x_q) \;\mod\; \Gamma ,\;\; \text{where}\;\; q \geq m\;.$$
This contradicts the genericity of our decorated complex, and the
claim follows. Since $\dim L' + \dim W = \dim V$, we have that $V
= L' \oplus W$. Decompose $v$ as $u+w$ with $u \in L',w \in W$,
and note that $w \in L$. Note that $e_1,\ldots,e_{m-1},w$ are
linearly independent. Furthermore, $w = \lambda(x_p + o(x_p))$ for
some $p \geq m$. Put $e_m = \lambda^{-1}w$. The vectors
$e_1,\ldots,e_m$ form a normal basis in $L$.
\end{proof}

\medskip
\noindent The same proof shows that if $L_1 \subset L_2$ are
subspaces of $V$, every normal basis in $L_1$ extends to a normal
basis in $L_2$.

\medskip
\noindent Now we turn to the analysis of the differential $d$.
Choose a normal basis $g_1,\ldots,g_q$ in $\text{Im}\, d$, and
extend it to a normal basis $g_1,\ldots,g_q,h_1,\ldots,h_p$ in
$\text{Ker}\, d$. Note that each of these $p+q$ vectors has the
form $x_j + o(x_j)$ with distinct $j$. Let us assume without loss
of generality that the remaining $n-p-q$ elements of the preferred
basis in $V$ are $x_1,\ldots,x_q$, and
$$g_i = x_{i+q} + o(x_{i+q}), h_j = x_{j+2q} + o(x_{j+2q})\;.$$
Here we use that, by the dimension theorem, $n=p+2q$. Note that
$$x_1,\ldots,x_q,g_1,\ldots,g_q,h_1,\ldots,h_p$$
is a normal system, and hence a basis in $V$. We call such a basis
a {\it spectral basis} of the decorated complex ${\bf V}$.

Note that $[h_1],\ldots,[h_p]$ is a basis in the homology $H({\bf
V})$. Consider any homology class $a = \sum \lambda_i [h_i]$.
Every element $v \in V$ with $a = [v]$ can be written as $v = \sum
\lambda_i h_i + \sum \alpha_j g_j$. Thus, by
Lemma~\ref{lem-alg-1}, $F(v) \geq \max_i F(\lambda_i h_i)$ and
hence
\begin{equation}\label{eq-alg-10}
c(a) = \max_i F(\lambda_i h_i)\;.
\end{equation}
This proves in particular that the spectral invariants are {\it
finite} provided $a \neq 0$.

\medskip
\noindent For finite sets $A = \{v_1,\ldots,v_s\}$ and $B =
\{w_1,\ldots,w_s\}$ we write $A \otimes B$ for the finite set
$\{v_i \otimes w_j\}$.

\medskip
\noindent Assume now that ${\bf V_1},{\bf V_2} $ are generic
decorated complexes. We say that they are {\it in  general position}
if their tensor product $ {\bf V} = {\bf V_1}
\widehat{\otimes}_{\cK} {\bf V_2}$ is generic. Let
$$B_i = \{x^{(i)}_1,\ldots,x^{(i)}_{q_i},g^{(i)}_1,\ldots,g^{(i)}_{q_i},h^{(i)}_1,\ldots,h^{(i)}_{p_i}\},\; i=1,2$$
be a spectral basis in ${\bf V_i}$. Obviously, $B_1 \otimes B_2$
is a normal basis in $V_1 \widehat{\otimes}_{\cK} V_2$. We shall
denote by $d_1,d_2,d$ the differentials and by $F_1,F_2,F$ the
filters in ${\bf V_1},{\bf V_2}$ and ${\bf V}$ respectively. Put
$G_i =\{g^{(i)}_1,\ldots,g^{(i)}_{q_i}\}$, $H_i =
\{h^{(i)}_1,\ldots,h^{(i)}_{p_i} \}$ and $K = G_1 \otimes B_2 \cup
B_1 \otimes G_2$. Observe that
$$\text{Im}\, d \subset W:=\text{Span} (K)\;.$$ Take any two classes
$$a_i = \sum \lambda^{(i)}_j [h^{(i)}_j] \in H({\bf V}_i)\;, i=1,2.$$
Suppose that $a_1 \otimes a_2= [v]$. Then $v$ is of the form
$$v = \sum_{m,l}\lambda^{(1)}_m \lambda^{(2)}_lh^{(1)}_m\otimes h^{(2)}_l +w$$
where $w$ must lie in $W$. Observe that $(H_1 \otimes H_2) \cap K
=\emptyset$.  By Lemma~\ref{lem-alg-1},
$$F(v) \geq \max_{m,l} F(\lambda^{(1)}_m \lambda^{(2)}_lh^{(1)}_m\otimes h^{(2)}_l)\;,$$
and hence
$$c(a_1 \otimes a_2) = \max_{m,l} F(\lambda^{(1)}_m \lambda^{(2)}_lh^{(1)}_m\otimes h^{(2)}_l)$$
$$= \max_{m,l} \; F_1(\lambda^{(1)}_mh^{(1)}_m) + F_2(\lambda^{(2)}_lh^{(2)}_l)$$
$$= \max_m F_1(\lambda^{(1)}_mh^{(1)}_m) + \max_l F_2(\lambda^{(2)}_lh^{(2)}_l) = c(a_1)+c(a_2)\;.$$
In the last equality we used \eqref{eq-alg-10}. This completes the
proof of Theorem~\ref{thm-algebra} for decorated complexes in
general position.

It remains to remove the general position assumption. This will be
done with the help of the following lemma. We shall work with a
family of decorated complexes $${\bf V}:=
(V,\{x_i\}_{i=1,\ldots,n},F,d,\Gamma)$$ which have exactly the
same data (preferred basis, grading, differential and $\Gamma$)
with the exception of the filter $F$ which will be allowed to vary
in the class of filters. The corresponding spectral invariants
will be denoted by $c(a,F)$.

\vfill\eject

\begin{lemma}\label{lem-alg-perturb}$\;$
\begin{itemize}
\item[{(i)}] If filters $F,F'$ satisfy $F(x_i) \leq F'(x_i)$ for
all $i=1,\ldots,n$, then $c(a,F) \leq c(a,F')$ for all non-zero
classes $a \in H({\bf V})$. \item[{(ii)}] If $F$ is a filter and
$\theta \in \R$, then $F+\theta$ is again a filter and
$c(a,F+\theta)=c(a,F)+\theta$ for all non-zero classes $a \in
H({\bf V})$.
\end{itemize}
\end{lemma}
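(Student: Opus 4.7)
\medskip
\noindent \emph{Proof plan.} Both assertions are essentially formal consequences of the extension formula $F(\sum \lambda_j x_j) = \max\{\nu(\lambda_j)+F(x_j) \mid \lambda_j \neq 0\}$, so the plan is short and I do not expect any serious obstacle; the main thing to check is that the filter axiom $F(dv) < F(v)$ is preserved in part (ii).

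\medskip
\noindent For part (i), the plan is to first verify the pointwise inequality $F(v) \leq F'(v)$ on $V \setminus \{0\}$. Given $v = \sum_j \lambda_j x_j \neq 0$, the extension formula directly gives
\[
F(v) = \max_{\lambda_j \neq 0}\, \bigl(\nu(\lambda_j)+F(x_j)\bigr) \leq \max_{\lambda_j \neq 0}\, \bigl(\nu(\lambda_j)+F'(x_j)\bigr) = F'(v),
\]
using the hypothesis $F(x_i) \leq F'(x_i)$ termwise. Now for every cycle $v \in \operatorname{Ker} d$ with $[v]=a$ we get $F(v) \leq F'(v)$, so taking the infimum of the right-hand side over such $v$ yields
\[
c(a,F) \;=\; \inf_{[v]=a,\, dv=0} F(v) \;\leq\; \inf_{[v]=a,\, dv=0} F'(v) \;=\; c(a,F'),
\]
as required.

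\medskip
\noindent For part (ii), I would proceed in two steps. First, the extension formula gives $(F+\theta)(v) = F(v)+\theta$ for every $v \neq 0$, so the filter axiom transports directly: for $v \neq 0$,
\[
(F+\theta)(dv) = F(dv) + \theta < F(v) + \theta = (F+\theta)(v)
\]
(with the convention $F(0)=-\infty$ handled by noting that $dv=0$ is not excluded but gives $-\infty$ on both sides of the relevant inequality). Hence $F+\theta$ is again a filter. Second, taking the infimum over cycles representing $a$ yields
\[
c(a,F+\theta) = \inf_{[v]=a,\, dv=0}\bigl(F(v)+\theta\bigr) = c(a,F)+\theta,
\]
which is the claimed shift formula.

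\medskip
\noindent The only minor subtlety is the bookkeeping with the convention $F(0) = -\infty$: one needs $c(a,\cdot) > -\infty$ for $a \neq 0$ so that the equality $c(a,F+\theta)=c(a,F)+\theta$ makes sense, but this finiteness was already observed in the discussion following \eqref{eq-alg-10}. No further difficulty arises.
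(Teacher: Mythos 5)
Your proof is correct; the paper simply states ``The proof is obvious and we omit it,'' and your argument is exactly the routine verification intended (monotonicity of the extension formula in the values $F(x_i)$ for part (i), and the termwise shift together with preservation of the filter axiom for part (ii)).
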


\medskip
\noindent The proof is obvious and we omit it. It follows that for
any two filters $F,F'$
$$|c(a,F)-c(a,F')| \leq ||F-F'||_{C^0}\;\;\forall a \in
H({\bf V})\setminus \{0\}\;.$$

\medskip
\noindent Assume now that ${\bf V_1},{\bf V_2} $ are decorated
complexes. Denote by $F_1,F_2$ their filters. Fix $\epsilon > 0$.
By a small perturbation of the filters we get new filters, $F'_1$
and $F'_2$, on our complexes so that the complexes become generic
and in general position, and furthermore
$$||F_1-F'_1||_{C^0} \leq \epsilon\;, ||F_2-F'_2||_{C^0} \leq
\epsilon\;.$$ Given homology classes $a_i \in H({\bf V}_i)$ we
have
$$|c(a_1,F_1) + c(a_2,F_2) -c(a_1 \otimes a_2,F_1+F_2)| \leq$$ $$
  |c(a_1,F'_1) + c(a_2,F'_2) -c(a_1 \otimes
  a_2,F'_1+F'_2)|+4\epsilon = 4\epsilon\;.$$
  Here we used that Theorem~\ref{thm-algebra} is already proved
  for generic complexes in general position. Since $\epsilon >0$ is
  arbitrary, we get that
$$c(a_1,F_1) + c(a_2,F_2) -c(a_1 \otimes a_2,F_1+F_2)=0\;,$$
which completes the proof of Theorem~\ref{thm-algebra} in full
generality. \qed

\section{Stable non-displaceability of heavy
sets}\label{sec-stab-nondispl-heavy-sets}

In this section we prove part (ii) of Theorem~\ref{thm-intro-heavy}.

\begin{prop}\label{prop-heavy-stab-disp}
Every heavy subset is stably non-displaceable.
\end{prop}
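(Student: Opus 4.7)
The strategy is to reduce stable non-displaceability of a heavy $X\subset M$ to ordinary non-displaceability of a product set in a \emph{closed} manifold $M\times \T^2$, and then to invoke the product formula for (super)heavy sets (Theorem~\ref{thm-products}), which is already at our disposal.

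First I reduce to the closed case. Any compactly supported Hamiltonian isotopy of $M\times T^*\SP^1$ has support contained in $M\times \{|r|\leq R\}$ for some $R>0$. Choosing a symplectic torus $\T^2$ of area strictly greater than $2R$, I embed the cylinder $(-R,R)\times \SP^1$ symplectically into $\T^2$ so that the zero section $\{r=0\}\times \SP^1$ is carried onto a non-contractible meridian $L\subset \T^2$. Stable displaceability of $X$ then yields, after extending by the identity, a compactly supported Hamiltonian isotopy of $M\times \T^2$ displacing $X\times L$. Hence it suffices to prove that $X\times L$ is non-displaceable in $M\times \T^2$.

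I claim that $X\times L$ is in fact heavy in $M\times \T^2$ with respect to the idempotent $a\otimes [\T^2]$. By Theorem~\ref{thm-products} this reduces to showing that $L$ is heavy in $\T^2$ with respect to $[\T^2]$. Since $\T^2$ is symplectically aspherical and $\pi_2(\T^2,L)=0$, this can be established by a direct Floer-theoretic computation on $\T^2$---the ``baby version'' of the product formula from Section~\ref{sec-proofs-prod} alluded to in the outline. The simplification is that on aspherical $\T^2$ the Novikov ring collapses to $\cF[q,q^{-1}]$ and the spectral invariants of Hamiltonians factor cleanly, so that the defining inequality for heaviness of $L$ can be verified by elementary manipulations, without recourse to the heavier Lagrangian Floer machinery developed later in Theorem~\ref{thm-Albers-elem-implies-heavy}.

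Finally, heavy sets are non-displaceable: if $\phi\in \Ham(M\times \T^2)$ displaced $X\times L$, one could find an open neighborhood $U\supset X\times L$ with $\phi(U)\cap U=\emptyset$, and a non-negative function $G$ supported in $U$ with $G|_{X\times L}=1$. The partial vanishing property of the partial quasi-state $\bar\zeta$ associated to $a\otimes [\T^2]$ would give $\bar\zeta(G)=0$, while heaviness would force $\bar\zeta(G)\geq \inf_{X\times L} G=1$, a contradiction. The main obstacle is therefore the direct heaviness of the meridian $L\subset \T^2$, which must be established self-contained via the baby product formula rather than by invoking later results; everything else is a packaging of previously established facts.
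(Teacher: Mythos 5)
Your architecture is essentially the paper's: compactify the cylinder factor to a torus, use the product formula for spectral invariants to split off the torus factor, and use the vanishing property of the partial quasi-state on displaceable supports to reach a contradiction. The packaging differs slightly. You first prove that $X\times L$ is heavy in $M\times\T^2$ via Theorem~\ref{thm-products} and then apply the general fact that a heavy set cannot have displaceable support (your final paragraph, which is correct: partial vanishing gives $\bar\zeta(G)=0$ while heaviness gives $\bar\zeta(G)\geq 1$). The paper instead takes the specific function $H'=H+f(r)$ with $H\leq 0$, $H^{-1}(0)=X$, computes $\zeta'(H')=\zeta(H)+1$ from Theorem~\ref{thm-spectral-sum} together with the torus computation, and derives the contradiction from Lemma~\ref{lem-max}. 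Since Theorem~\ref{thm-products} is itself a corollary of Theorem~\ref{thm-spectral-sum}, the two routes rest on the same inputs; yours is arguably cleaner in that it isolates the reusable statement ``the meridian is heavy'' (Corollary~\ref{cor-meridian-heavy} in the paper).

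The one place you should not be allowed to wave your hands is precisely that statement. You assert that heaviness of the meridian $L\subset\T^2$ follows ``by elementary manipulations'' because the Novikov ring collapses. It does not: what is needed is the spectral computation $c([T],F_\alpha)=\alpha=\max F_\alpha$ for a Hamiltonian $F_\alpha=\alpha f(r)$ whose maximum level set is $L$ (Lemma~\ref{lem-torus}), and the only two candidate critical values of the action are $\alpha$ and $-\alpha\beta$; selecting the maximum requires Schwarz's strict inequality $c([T],F)>c([\text{point}],F)$, which is a genuine Floer-theoretic input, not algebra of the Novikov ring. Once that is granted, heaviness of $L$ follows by monotonicity exactly as in Corollary~\ref{cor-meridian-heavy}: for $H$ with $\inf_L H=1$ choose $f\leq H$ with $f\equiv 1$ on $L$ and conclude $\zeta(H)\geq\zeta(F_1)=1$. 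With that lemma supplied, your proof is complete and equivalent to the paper's.
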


\medskip
\noindent For the proof we shall need the following auxiliary
statement. Given $R > 0$, consider the torus $\T^2_R$ obtained as
the quotient of the cylinder $T^* \SP^1 = \R (r) \times \SP^1\,
(\theta\ {\rm mod}\ 1)$ by the shift $(r,\theta)\mapsto
(r+R,\theta)$. For $\alpha >0$ define the function $F_{\alpha}
(r,\theta):= \alpha f (r)$ on $\T^2_R$, where $f (r)$ is any
$R$-periodic function having only two non-degenerate critical
points on $[0,R]$: a maximum point at $r=0$ with $f(0)=1$, and a
minimum point at $r=R/2$, $f (R/2) =: - \beta < 0$. We denote by
$[T]$ the fundamental class of $\T^2_R$. We work with the
symplectic form $dr \wedge d\theta$ on $\T^2_R$.

\begin{lemma}\label{lem-torus} $c([T],F_{\alpha})=\alpha$.

\end{lemma}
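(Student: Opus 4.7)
The plan is to exploit symplectic asphericity of $\T^2_R$ (so $\Gamma=0$ and $\Lambda=\cF[q,q^{-1}]$) together with the narrow list of contractible 1-periodic orbits available for $F_\alpha$. First, the upper bound comes from comparison with the constant Hamiltonian: since $F_\alpha\leq \max F_\alpha=\alpha$ pointwise, monotonicity combined with the Hamiltonian shift property yields
\[ c([T],F_\alpha)\;\leq\; c([T],\alpha)\;=\; c([T],0)+\alpha\;=\;\nu([T])+\alpha\;=\;\alpha. \]

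Second, I would locate the contractible part of the spectrum. Computing in the coordinates $(r,\theta)$ with $\omega = dr\wedge d\theta$, the Hamiltonian vector field of $F_\alpha$ is $\alpha f'(r)\partial_\theta$, so the time-$t$ flow is $(r,\theta)\mapsto (r,\theta+t\alpha f'(r))$. A 1-periodic orbit requires $\alpha f'(r)\in\Z$; orbits with $\alpha f'(r)\neq 0$ wrap nontrivially in the $\theta$-direction of $\T^2_R$ and are therefore non-contractible. Hence the contractible 1-periodic orbits of $F_\alpha$ are precisely the constants at critical points of $f$, forming a Morse-Bott circle at $r=0$ (action $\alpha$) and another at $r=R/2$ (action $-\alpha\beta$). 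Perturb autonomously via $\tilde F_\epsilon := F_\alpha + \epsilon h(\theta)$ for a small Morse function $h$ on $\SP^1$ with one max and one min; for $\epsilon$ small each critical circle splits into two isolated non-degenerate constant orbits with actions $\alpha+O(\epsilon)$ (two orbits) and $-\alpha\beta+O(\epsilon)$ (two orbits), and no new orbits appear. Spectrality and Lipschitz continuity of $c$ in the $C^0$-norm, together with $\epsilon\to 0$, then force
\[ c([T],F_\alpha)\;\in\;\{\alpha,\,-\alpha\beta\}. \]

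Third, I would rule out $-\alpha\beta$ by a continuity-in-$\alpha$ argument. Since $\|F_{\alpha_1}-F_{\alpha_2}\|_{C^0}\leq\max(1,\beta)|\alpha_1-\alpha_2|$, the map $\alpha\mapsto c([T],F_\alpha)$ is continuous. For $\alpha>0$ sufficiently small the perturbed Hamiltonian $\tilde F_\epsilon$ is $C^2$-small, the standard Morse--Floer identification on a symplectically aspherical manifold holds, and the unity $[T]$ of the quantum ring goes under PSS to the unique maximum generator, giving $c([T],\tilde F_\epsilon)=\alpha+\epsilon h(\theta_{\max})$ and hence $c([T],F_\alpha)=\alpha$ for small $\alpha>0$. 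The two branches $\alpha$ and $-\alpha\beta$ are disjoint continuous functions of $\alpha$ on $(0,\infty)$ (using $\beta>0$), so by connectedness $c([T],F_\alpha)=\alpha$ for every $\alpha>0$.

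The main technical delicacy is the identification at small $\alpha$ of the PSS image of $[T]$ with the maximum generator, which is a standard Morse--Floer comparison but requires handling the Morse-Bott degeneracy via $\tilde F_\epsilon$. An alternative that avoids this point: directly in the Floer complex of $\tilde F_\epsilon$, every graded piece $C_i(\tilde F_\epsilon)$ has $\cK$-rank $2$ (count $q^k$-shifts of the four generators of Conley--Zehnder indices $2,1,1,0$), and since $\dim_\cK QH_i(\T^2_R)=2$ in every degree mod $2$, the Floer differential must vanish identically; then $HF_2(\tilde F_\epsilon)$ is spanned by the max orbit and by $q\cdot(\text{min orbit})$, and multiplicativity of the PSS ring isomorphism together with $[T]*[T]=[T]$ pins $\phi_M([T])$ down as the max orbit generator, whose action is $\alpha+O(\epsilon)$.
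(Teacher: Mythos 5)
Your argument is correct, and the first part (spectrality over the contractible spectrum $\{\alpha,-\alpha\beta\}$, using asphericity of $\T^2_R$ so that cappings do not shift the action) is exactly what the paper does. Where you genuinely diverge is in ruling out the value $-\alpha\beta$: the paper quotes Schwarz's strict inequality $c([T],F_{\alpha})>c([\text{point}],F_{\alpha})$ (valid since $\phi_{F_\alpha}\neq\id$), which immediately forces the larger of the two spectral values, whereas you run a continuity-in-$\alpha$ argument anchored at the $C^2$-small regime, where the Morse--Floer comparison identifies $\phi_M([T])$ with the maximum generator, and then use disjointness of the two candidate branches on $(0,\infty)$. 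Both routes are valid; the paper's is a one-line deduction from a black-boxed result of \cite{Schwarz}, while yours is longer but more self-contained and also generalizes to situations where one can compute the invariant for one parameter value and knows the spectrum varies continuously without crossings. Two small points to nail down in your version: (a) after perturbing by $\epsilon h(\theta)$ you should justify that no \emph{new contractible} $1$-periodic orbits appear -- for small $\epsilon$ every $1$-periodic orbit is $C^0$-close to one of $F_\alpha$, orbits near the resonant circles $\{\alpha f'(r)\in\Z\setminus\{0\}\}$ stay non-contractible, and near the Bott circles $S_\pm$ the non-constant orbits of the autonomous flow lie on level curves winding in $\theta$ or have period much larger than $1$; (b) your alternative rank-counting ending still needs the pair-of-pants product to locate $\phi_M([T])$, which is not obviously simpler than the small-$\alpha$ PSS computation you already use, so I would keep the continuity argument as the main line.
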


\begin{proof} Note that the contractible closed orbits of period $1$ of the
Hamiltonian flow generated by $F_{\alpha}$ are fixed points forming
circles $S_+ = \{ r = 0\}$ and $S_- = \{ r = R/2 \}$. The actions of
the fixed points on $S_{\pm}$ equal respectively to $\alpha$ and $-
\alpha \beta$, and thus the spectral invariants of $F_{\alpha}$ lie
in the set $\{\alpha,-\alpha\beta\}$. Recall from \cite{Schwarz}
that $c([T],F_{\alpha}) > c([\text{point}],F_{\alpha})$. Thus
$c([T],F_{\alpha})=\alpha$.
\end{proof}

\medskip
\noindent
\begin{lemma}\label{lem-max}
Let $H \in C^{\infty}(M)$ so that $H^{-1}(\max H)$ is
displaceable. Then $\zeta(H) < \max H$.
\end{lemma}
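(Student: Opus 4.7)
\medskip
\noindent The plan is to exploit the partial additivity and vanishing properties of $\zeta$ listed in Theorem~\ref{thm-partial-qstate-partial-qmm-basic}, after chopping off a small neighborhood of the maximum locus. Set $c := \max H$ and $X := H^{-1}(c)$. Since $X$ is compact and displaceable, a standard compactness argument yields an open neighborhood $U \supset X$ whose closure $\overline{U}$ is still (Hamiltonian) displaceable. By continuity of $H$, one can choose $\epsilon > 0$ so small that $\{H > c - \epsilon\} \subset U$.

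\medskip
\noindent Next, I would pick a non-decreasing smooth cutoff $\phi: \R \to \R$ satisfying $\phi(t) = t$ for $t \leq -\epsilon$ and $\phi(t) = -\epsilon/2$ for $t \geq -\epsilon/2$, and write the decomposition
\[
H - c \;=\; \phi(H-c) \;+\; \bigl((H-c) - \phi(H-c)\bigr).
\]
Both summands are functions of $H$ alone, so in particular $\{\phi(H-c),\, (H-c)-\phi(H-c)\} = 0$. Moreover, by the choice of $\phi$, the difference $(H-c) - \phi(H-c)$ vanishes on $\{H \leq c-\epsilon\}$, hence its support is contained in $\overline{\{H > c-\epsilon\}} \subset \overline{U}$, which is displaceable.

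\medskip
\noindent The finish is immediate: by partial additivity $\zeta(H-c) = \zeta(\phi(H-c))$, while monotonicity together with the normalization $\zeta(\alpha) = \alpha$ (and the fact that $\phi(H-c) \leq -\epsilon/2$ pointwise) gives $\zeta(\phi(H-c)) \leq -\epsilon/2$. Using additivity with respect to constants,
\[
\zeta(H) \;=\; \zeta(H-c) + c \;\leq\; c - \epsilon/2 \;<\; \max H.
\]

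\medskip
\noindent There is no real obstacle here; the only point that requires a moment of care is the passage from displaceability of the compact set $X$ to displaceability of a closed neighborhood $\overline{U}$, which I would justify by noting that if $\phi \in \Ham(M)$ displaces $X$, then $\phi(X) \cap X = \emptyset$ together with compactness yields an open $U \supset X$ with $\phi(\overline{U}) \cap \overline{U} = \emptyset$.
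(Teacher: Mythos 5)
Your argument is correct and follows essentially the same route as the paper: both proofs enlarge $H^{-1}(\max H)$ to a displaceable neighborhood, split $H$ (up to a constant) into two Poisson-commuting functions of $H$ — one supported in that neighborhood, one bounded strictly below the maximum — and then apply partial additivity together with monotonicity. The only cosmetic difference is that the paper cuts out a bump $\rho(H)$ near the maximum while you truncate $H-c$ from above; the logic is identical.
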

\begin{proof}
Choose $\epsilon > 0$  so that the set $$H^{-1}((\max H -\epsilon,
\max H])$$ is displaceable. Choose a real-valued cut-off function
$\rho: \R \to [0,1]$ which equals $1$ near $\max H$ and which is
supported in $(\max H -\epsilon, \max H+\epsilon)$. Thus $\rho
(H)$ is supported in $H^{-1}((\max H -\epsilon; \max H])$ and
$\zeta (\rho (H)) = 0$. Since $H$ and $\rho (H)$ Poisson-commute,
the vanishing and the monotonicity axioms yield
$$\zeta(H) =  \zeta (\rho(H)) +
\zeta (H -\rho(H)) \leq \max (H - \rho(H)) < \max
H\;.$$\end{proof}

\medskip
\noindent {\bf Proof of Proposition~\ref{prop-heavy-stab-disp}:} It
suffices to show that for every $R > 0$ the set
$$Y:=X \times \{r=0\} \subset M':= M \times \T^2_R$$
is non-displaceable. Assume on the contrary that $Y$ is
displaceable. Choose a function $H$ on $M$ with $H \leq 0$,
$H^{-1}(0)= X$. Put
$$H'=H+ F_1 = H + f(r): M' \to \R.$$
Assume that the partial quasi-state $\zeta$ on $M$ is associated
to some non-zero idempotent $a \in QH_* (M)$ by means of
(\ref{eqn-zeta-part-qstate}). Denote by $\zeta'$ the quasi-state
on $M'$ associated to $a \otimes T$. Note that
$$Y=(H')^{-1}(\max H')\;,\;\;\; \text{where}\;\;\;\max H' =1\;,$$
while Theorem~\ref{thm-spectral-sum} and Lemma~\ref{lem-torus}
imply that
$$\zeta'(H') = \zeta(H)+1\;.$$
By Lemma~\ref{lem-max} $\zeta'(H') < 1$ and so $\zeta(H) <0$. In
view of Proposition~\ref{prop-def-heavy}, we get a contradiction
with the heaviness of $X$. \qed

\medskip
\noindent Lemma~\ref{lem-torus} also yields a simple proof of the
following result which also follows from
Corollary~\ref{thm-Albers-elem-implies-heavy}:

\begin{cor}
\label{cor-meridian-heavy} Any meridian of $\T^2$ is heavy (with
respect to the fundamental class $[T]$).

\end{cor}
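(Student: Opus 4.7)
The plan is to derive this corollary directly from Lemma~\ref{lem-torus} using only the elementary axioms of the partial symplectic quasi-state $\zeta$ associated to $[T]$: monotonicity, semi-homogeneity, and additivity with respect to constants. Let $L\subset \T^2_R$ be a meridian. Since $\zeta$ is invariant under the translations $(r,\theta)\mapsto (r+r_0,\theta)$, which belong to $\Symp_0(\T^2_R)$, I may assume $L=\{r=0\}$. To show $L$ is heavy, I must verify $\zeta(H)\geq \inf_L H$ for every $H\in C^\infty(\T^2_R)$. Replacing $H$ by $H-\inf_L H$ and using additivity with constants, this reduces to the following: if $H\in C^\infty(\T^2_R)$ satisfies $H(0,\theta)\geq 0$ for every $\theta$, then $\zeta(H)\geq 0$.

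Fix an arbitrary $\epsilon>0$. The strategy is to produce a smooth comparison function $G_\epsilon$ with $G_\epsilon\leq H$ pointwise and $\zeta(G_\epsilon)=-\epsilon$; monotonicity then gives $\zeta(H)\geq -\epsilon$, and letting $\epsilon\to 0$ proves the claim. Let $f$ be the function used in Lemma~\ref{lem-torus}, so that $f\leq 1$, $f(0)=1$ is the maximum, and $f(R/2)=-\beta<0$ is the minimum. Set
\[
G_\epsilon(r,\theta):=\alpha\bigl(f(r)-1\bigr)-\epsilon,
\]
where $\alpha>0$ will be chosen below. By Lemma~\ref{lem-torus}, $c([T],lF_\alpha)=l\alpha$ for every positive integer $l$, hence $\zeta(F_\alpha)=\alpha$; combining this with the additivity with respect to constants yields $\zeta(G_\epsilon)=\alpha-\alpha-\epsilon=-\epsilon$, as required.

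It remains to choose $\alpha$ so that $G_\epsilon\leq H$ everywhere on $\T^2_R$. Since $H(0,\theta)\geq 0$ for all $\theta$ and $H$ is continuous on the compact torus, there exists $\delta>0$ such that $H(r,\theta)\geq -\epsilon/2$ whenever $|r|\leq \delta$. On this strip $G_\epsilon(r,\theta)\leq -\epsilon\leq -\epsilon/2\leq H(r,\theta)$ because $f\leq 1$, and this holds for every $\alpha>0$. Outside the strip, $1-f(r)\geq c_\delta$ for some $c_\delta>0$ depending only on $\delta$ and $f$, so $G_\epsilon(r,\theta)\leq -\alpha c_\delta-\epsilon$; choosing $\alpha$ so large that $-\alpha c_\delta-\epsilon\leq \min_{\T^2_R} H$ gives $G_\epsilon\leq H$ on $\{|r|>\delta\}$ as well. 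The argument has no real obstacle beyond this two-region book-keeping, and the extension from smooth $H$ to continuous $H$ in Definition~\ref{def-heavy} is automatic from the $C^0$-Lipschitz property of $\zeta$.
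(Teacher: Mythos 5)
Your proof is correct and follows essentially the same route as the paper: after reducing to the meridian $\{r=0\}$, both arguments bound $H$ from below by a function of the form $\alpha f(r)+\mathrm{const}$, compute its $\zeta$-value via Lemma~\ref{lem-torus} (using $lF_\alpha=F_{l\alpha}$ and additivity with constants), and conclude by monotonicity. Your $\epsilon$-and-two-region bookkeeping just makes explicit the comparison $f\leq H$ that the paper asserts directly by choosing $f$ sufficiently steep.
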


\begin{proof}
In the notation as above identify $\T^2$ with $\T^2_1$ for $R=1$.
Since any two meridians of $\T^2$ can be mapped into each other by
a symplectic isotopy and since such an isotopy preserves
heaviness, it suffices to prove that the meridian $S:=S_+ = \{ r =
0\}$ (see the proof of Lemma~\ref{lem-torus}) is heavy.

Let $H: \T^2\to \R$ be a Hamiltonian and let us show that $\zeta
(H) \geq \inf_S H$, where $\zeta$ is defined using $[T]$. Shifting
$H$, if necessary, by a constant, we may assume without loss of
generality that $\inf_S H =1$. Pick $f = f(r): \T^2\to \R$ as in
the definition of $F_\alpha$ so that $F_1 = f\leq H$ on $\T^2$
(note that $f$ equals $1$ on $S$). Then Lemma~\ref{lem-torus}
yields
$$
\zeta (H) \geq \zeta (F_1) = 1 = \inf_S H.
$$
\end{proof}

\section{Analyzing stable stems}\label{sec-anal-stable-stems}

\medskip
\noindent {\bf Proof of Theorem~\ref{thm-stem}:} Assume that
${\mathbb A}$ is a Poisson-commutative subspace of $C^{\infty}(M)$,
$\Phi: M \to {\mathbb A}^*$ its moment map with the image $\Delta$,
and let $X=\Phi^{-1}(p)$ be a stable stem of ${\mathbb A}$.

Take any function $H\in C^{\infty}({\mathbb A}^*)$ with $H \geq 0$
and $H(p)=0$. We claim that $\zeta(\Phi^*H)=0$. By an arbitrarily
small $C^0$-perturbation of $H$ we can assume that $H=0$ in a
small neighborhood, say $U$, of $p$. Choose an open covering
$U_0,U_1,\ldots,U_N$ of $\Delta$ so that $U_0=U$, and all
$\Phi^{-1}(U_i)$ are stably displaceable for $i \geq 1$ (it exists
by the definition of a stem). Let $\rho_i: \Delta\to\R$,
$i=0,\ldots,N$, be a partition of unity subordinated to the
covering $\{ U_i\}$.

Take the two-torus $\T^2_R$ as in
Section~\ref{sec-stab-nondispl-heavy-sets}.
 Choose $R>0$ large
enough so that $\Phi^{-1} (U_i) \times \{r=\text{const}\}$ is
displaceable in $M \times \T^2_R$ for all $i \geq 1$. Choose now a
sufficiently fine covering $V_j, j=1,\ldots,K$, of the torus
$\T^2_R$ by sufficiently thin annuli $\{ |r -r_j| < \delta\}$ so
that the sets $\Phi^{-1} (U_i) \times V_j $ are displaceable in $M
\times \T^2_R$ for all $i \geq 1$ and all $j$. Let $\varrho_j =
\varrho_j(r)$, $j=1,\ldots,K$, be a partition of unity
subordinated to the covering $\{V_j\}$.

Denote by $\zeta'$ the partial quasi-state corresponding to $a
\otimes T$. Put $F(r,\theta)= \cos ({2\pi r}/{R})$. Write
$$\Phi^*H + F= \sum_{i=0}^N\sum_{j=1}^K (\Phi^*H + F)\cdot
\Phi^*\rho_i\cdot \varrho_j = $$ $$\Phi^*(H\rho_0) +
F\cdot\Phi^*\rho_0 + \sum_{i=1}^N\sum_{j=1}^K (\Phi^*H + F)\cdot
\Phi^*\rho_i\cdot \varrho_j\;.$$

Note that $H\rho_0 =0$ and $F\cdot\Phi^*\rho_0 \leq 1$. Applying
partial quasi-additivity and monotonicity we get that
$$\zeta'(\Phi^*H + F) = \zeta'(F\cdot \Phi^*\rho_0) \leq 1.$$
By Lemma~\ref{lem-torus} and the product formula
(Theorem~\ref{thm-spectral-sum} above) we have
$$\zeta'(\Phi^*H + F)=\zeta(\Phi^*H) +1 \leq 1$$
and hence $\zeta(\Phi^*H) \leq 0$. On the other hand,
$\zeta(\Phi^*H) \geq 0$ since $H \geq 0$. Thus $\zeta(\Phi^*H) =0$
and the claim follows.

Further, given any function $G$ on $M$ with $G \geq 0$ and $G|_X
=0$, one can find a function $H$ on ${\mathbb A}^*$ with $H(p)=0 $ so that
$G \leq \Phi^*H$. By monotonicity and the claim above
$$0\leq \zeta(G) \leq \zeta(\Phi^*H) =0\;,$$
and hence $\zeta(G)=0$. Thus $X$ is superheavy. \qed

\section{Monotone Lagrangian submanifolds }\label{sec-Lagr-proofs}

The main tool of proving (super)heaviness of monotone Lagrangian
submanifolds satisfying the Albers condition is the spectral
estimate in Proposition~\ref{pro-BC}(iii) below, which originated
in the work by Albers \cite{Albers}. Later on Biran and Cornea
pointed out a mistake in \cite{Albers}, and suggested a correction
together with a far reaching generalization in \cite{BC}. Let us
mention that the original Albers estimate was used in the first
version of the present paper. We thank Biran and Cornea for
informing us about the mistake, explaining to us their approach
and helping us to correct a number of our results affected by this
mistake.

The main ingredient of Biran-Cornea techniques which is needed for
our purposes is the following result. Let $(M,\omega)$ be a closed
monotone symplectic manifolds with $[\omega] = \kappa \cdot
c_1(M)$, $\kappa >0$. Write $N$ for the minimal Chern number of
$(M,\omega)$. Let $L^n \subset M^{2n}$ be a closed monotone
Lagrangian submanifold with the minimal Maslov number $N_L \geq
2$.

We shall treat slightly differently the cases when $N_L$ is even
and odd. Let us mention that for orientable $L$, $N_L$ is
automatically even. Thus, due to our convention, when $N_L$ is odd
we work with the basic field $\cF = \Z_2$. Let $\Gamma = \kappa N
\cdot \Z$ be the group of periods of $M$. Recall that the quantum
ring has the form $QH_*(M) = H_*(M;\cF) \otimes_{\cF} \Lambda$,
where the Novikov ring $\Lambda$ is defined as $\Lambda =
\cK_{\Gamma}[q,q^{-1}]\;.$ Put $\Gamma' = (\kappa N/2) \cdot \Z$.
Consider an extended Novikov ring $\Lambda':=
\cK_{\Gamma'}[q^{\frac12},q^{-\frac12}]$. Define now $QH'_*(M)$ as
$QH_*(M)$ if $N_L$ is even, and as $H_*(M,\Z_2) \otimes_{\Z_2}
\Lambda'$ if $N_L$ is odd. In the latter case $QH'_*(M)$ is an
extension of $QH_*(M)$, and we shall consider without special
mentioning $QH_*(M)$, $\Lambda$, $\cK_\Gamma$ as subrings of
$QH'_* (M)$, $\Lambda'$, $\cK_{\Gamma'}$. The grading of
$QH_*'(M)$ is determined by the condition $\text{deg}\;
q^{\frac12}=1$. As before, we shall use notation
$QH'_{\bullet}(M)$, where $\bullet = \text{``even"}$ when $\cF =
\C$ and $\bullet = *$ when $\cF=\Z_2$.

Note that the spectral invariants (and hence partial symplectic
quasi-states) are well-defined over the extended ring, and
furthermore, their values and properties, by tautological reasons,
do not alter under such an extension (cf. a discussion in
\cite{BC}, Section 5.4). Put $w:= s^{\kappa N_L/2}q^{N_L/2}$.
Recall that $j$ stands for the natural morphism $H_\bullet
(L;\cF)\to H_\bullet (M;\cF)$.

\medskip
\noindent
\begin{prop}\label{pro-BC} Assume that $k > n+1-N_L$. If $\cF=\C$ assume in addition that $k$ is even.
Then there exists a canonical homomorphism $j^q: H_k (L;\cF) \to
QH'_k (M)$  with the following properties\footnote{The letter
``$q$" in $j^q$ stands for {\it quantum}.}:
\begin{itemize}
\item[{(i)}] $j^q(x) = j(x)+w^{-1}y$, where $y$ is a polynomial in
$w^{-1}$ with coefficients in $H_\bullet (M;\cF)$; \item[{(ii)}]
$j^q([L]) = j([L])$; \item[{(iii)}] If $j^q (x)\neq 0$ then $c(j^q
(x),H) \leq \sup_L H$ for every $H \in C^{\infty}( M)$.
\end{itemize}
\end{prop}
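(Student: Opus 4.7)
The plan is to follow the Biran--Cornea construction of a quantum inclusion, which enhances Albers's original map. The underlying geometric input is a moduli space of hybrid objects: half-tubes $u : (-\infty,0] \times \SP^1 \to M$ solving a perturbed Cauchy--Riemann equation for a Hamiltonian $H$, with $u(\{0\}\times\SP^1) \subset L$ and asymptotic at the negative end to a contractible $1$-periodic orbit of $H$, together with (in PSS style) a spiked sphere at the orbit end that turns the output into a chain in the Floer complex of $M$ rather than in the Lagrangian Floer complex. Evaluating the boundary circle of $u$ against a cycle representing $x \in H_k(L;\cF)$ and counting rigid configurations produces a chain in $CF_*(H;J)$ whose homology class, transported to $QH'_*(M)$ by the PSS isomorphism, is the desired element $j^q(x)$.

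First I would set up the moduli space and carry out the dimension count. The component indexed by a class $A \in \pi_2(M,L)$ of Maslov index $\mu_L(A) \in N_L \cdot \Z$ has expected dimension $k - \mu_L(A)$, so after imposing the $k$-cycle constraint only strata with $\mu_L(A) \geq 0$ contribute, and each contributes with a Novikov weight $s^{-\omega(A)}q^{-c_1(A)}$, which in the monotone setting is a power of $w^{-1}$. The inequality $k > n + 1 - N_L$ is precisely the Albers condition needed to rule out boundary phenomena (disk bubbling off the marked point on $L$) that would spoil the chain-level identities. When $\cF = \C$, the relatively spin assumption on $L$ provides coherent orientations making the count $\C$-valued, and the evenness restriction on $k$ matches the $\Z/2$-grading used in the definition of $QH'_\bullet$.

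Property (i) then follows by stratifying contributions according to $\mu_L(A)$: the $A = 0$ stratum contributes the classical inclusion $j(x)$ (by a direct identification with Morse-theoretic configurations when $H$ is taken $C^2$-small, extended by homotopy invariance of the construction), while strata with $\mu_L(A) = \ell N_L > 0$ produce terms in $H_\bullet(M;\cF)$ scaled by $w^{-\ell}$, with only finitely many $\ell$ contributing for grading reasons. Property (ii), the identity $j^q([L]) = j([L])$, is the case $x = [L]$, where the boundary cycle imposes no constraint; one verifies directly, essentially as in the Biran--Cornea computation of the unit in the pearl algebra, that all $\mu_L(A) > 0$ strata contribute zero, leaving only the classical term $j([L])$.

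The heart of the argument, and the main technical obstacle, is the spectral estimate (iii). The standard action--energy computation for a half-tube $u$ in a class $A \in \pi_2(M,L)$, capped at the orbit end by $v$ to produce a disc spanning $\gamma$ in class $[u \# v]$, yields an action bound of the form
\[
\cA_H\bigl([\gamma, u \# v]\bigr) \;\leq\; \sup_L H - \omega(A) - \int v^*\omega,
\]
where the two subtracted terms correspond respectively to the Novikov weight $s^{-\omega(A)}q^{-c_1(A)}$ attached to $A$ in the chain representing $j^q(x)$ and to the standard PSS shift from the capping disc. Consequently every lifted orbit appearing with non-zero coefficient in that chain has filtration level at most $\sup_L H$, so any cycle representing $j^q(x)$ has filter value $\leq \sup_L H$, and the definition of the spectral invariant then yields $c(j^q(x), H) \leq \sup_L H$ whenever $j^q(x) \neq 0$. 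The substantial technical work --- transversality, compactness up to the standard bubbling strata, sign bookkeeping, and independence from auxiliary choices --- is carried out in \cite{BC}; my remaining task reduces to importing those results and verifying compatibility with the action estimate at the chain level, and with the $\C$-coefficient orientation data where relevant.
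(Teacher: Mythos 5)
Your proposal is logically sound, but it is worth noting that the paper does not actually prove this proposition: its ``proof'' is a dictionary translating the statement into specific results of Biran--Cornea \cite{BC} (Theorem A(iii) and Proposition 4.5.1(i) for item (i), Remark 5.3.2.a for item (ii), Lemma 5.3.1(ii) for item (iii)), together with an explicit disclaimer that the $\cF=\C$ case is \emph{conditional} and taken for granted. You likewise defer all transversality, compactness and orientation issues to \cite{BC}, so at the level of logical structure the two are the same; the difference is in the geometric model you sketch. You describe $j^q$ via an Albers-style closed--open moduli space (half-tubes with one boundary circle on $L$, asymptotic to a periodic orbit, evaluated against a cycle in $L$), whereas the map the paper actually identifies $j^q$ with is Biran--Cornea's quantum inclusion $i_L$, built from the pearl complex (Morse trajectories on $L$ interrupted by holomorphic disks) and compared to the Hamiltonian Floer complex by a PSS-type chain map; the action estimate in (iii) is proved there at the level of that comparison. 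The two models are expected to agree, but the paper is citing the pearl-complex version precisely because the original Albers construction contained an error that \cite{BC} corrects, so if you intend to run the half-tube version you would need to address the bubbling analysis yourself rather than import it. Two smaller points: your dimension formula $k-\mu_L(A)$ omits the Conley--Zehnder index of the asymptotic orbit, and the vanishing of the quantum corrections for $x=[L]$ in (ii) is not forced by degree reasons alone (terms in $H_{n+\ell N_L}(M)$ can be non-zero for small $\ell$) --- it is the genuine unit computation in \cite{BC} that you correctly point to. Finally, you should state, as the paper does, that the $\C$-coefficient extension (coherent orientations from the relative spin structure) is an assumption rather than an established fact.
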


\medskip
\noindent In particular, if $S$ is an Albers element of $L$, we
have $j^q(S)= j(S)+ O(w^{-1}) \neq 0$.

This proposition was proved by Biran and Cornea in \cite{BC} in
the case $\cF=\Z_2$: The map $j^q$ is essentially the map $i_L$
appearing in Theorem A(iii) in \cite{BC}.
Proposition~\ref{pro-BC}(i) above is a combination of Theorem
A(iii) and Proposition 4.5.1(i) in \cite{BC}. Our variable $w$
corresponds to the variable $t^{-1}$ in \cite{BC}, while our
$s^{N\kappa}q^N$ corresponds to the variable $s^{-1}$ in  Section
2.1.2 of \cite{BC}. After such an adjustment of the notation, the
formula $w:= s^{\kappa N_L/2}q^{N_L/2}$ above can be extracted
from Section 2.1.2 of \cite{BC}. For Proposition~\ref{pro-BC}(ii)
above see Remark 5.3.2.a in \cite{BC}.
Proposition~\ref{pro-BC}(iii) above follows from Lemma 5.3.1(ii)
in \cite{BC}. Finally, let us repeat the disclaimer made in
Section~\ref{subsec-mon-lagr}: we take for granted that
Proposition~\ref{pro-BC} remains valid for the case $\cF=\C$.

Let us pass to the proofs of our results on (super)-heaviness of
monotone Lagrangian submanifolds. We start with the following
remark. Let $S$ be an Albers element of $L$. The Poincar\'e
duality  property of spectral invariants (see
Section~\ref{sec-spectral-numbers} above) extends verbatim to the
case of the ring $QH'(M)$ with the following modification: When
$N_L$ is odd, the pairing $\Pi$ introduced in
Section~\ref{sec-spectral-numbers} extends in the obvious way to a
non-degenerate $\cF$-valued pairing on $QH'_\bullet (M)$ which we
still denote by $\Pi$. Applying Poincar\'e duality and
substituting $H:=-F$ into Proposition~\ref{pro-BC} (iii) above we
get that for every function $ F \in C^{\infty}(M)$
$$c(T,F) \geq \inf_L F\; \;\; \forall T \in QH'_\bullet (M)\;\;
\text{with}\;\; \Pi (T,j^q(S))\neq 0.$$ In particular, given a
non-zero idempotent $a\in QH'_\bullet (M)$ and a class $b\in
QH'_\bullet (M)$, so that $\Pi (a*b,j^q(S))\neq 0$, we get, using
the normalization property of spectral invariants, that
\begin{equation}\label{eq-3-1}
c(a,F) +\nu (b)  \geq  c(a*b, F) \geq \inf_L F\; \;\; \forall F
\in C^{\infty}(M)\;.
\end{equation}
Therefore, applying \eqref{eq-3-1} to $kF$ for $k\in\N$, dividing
by $k$ and passing to the limit as $k\to +\infty$, we get that for
the partial quasi-state $\zeta$, defined by $a$,
\[
\zeta(F)\geq \inf_L F\; \;\; \forall F \in C^{\infty}(M),
\]
meaning that $L$ is heavy with respect to $a$.

\medskip
\noindent {\bf Proof of
Theorem~\ref{thm-Albers-elem-implies-heavy}:} Let $S$ be an Albers
element of $L$. Let $T \in H_\bullet (M;\cF)$ be any singular
homology class such that $T \circ j(S) \neq 0$. Thus, applying
Proposition~\ref{pro-BC} (i) we see that $\Pi([M]*T,j^q(S)) =
\Pi(T,j^q(S)) \neq 0$, and hence inequality \eqref{eq-3-1},
applied to $a=[M], b=T$, yields that $L$ is heavy with respect to
$[M]$. \qed

\medskip
Let us pass to the proof of Theorem~\ref{thm-lagr-semismp-a} on the
effect of semi-simplicity of the quantum homology. It readily
follows from the next more general statement. Let $L_1,\ldots,L_m$
be Lagrangian submanifolds satisfying the Albers condition. Let
$S_i$ be any Albers element of $L_i$. Denote by $Z_i = j^q(S_i) \in
QH'_\bullet (M)$ its image under the inclusion morphism from
Proposition~\ref{pro-BC} above.

\medskip
\noindent
\begin{thm}\label{thm-semis-gen} Given such $L_1,\ldots,L_m$ and $Z_1,\ldots,Z_m$,
assume, in addition, that $QH_{2n}(M)$ is semi-simple and the
Lagrangian submanifolds $L_1,\ldots,L_m$ are pair-wise disjoint.
Then the classes $Z_1,\ldots,Z_m$ are linearly independent over
$\cK_{\Gamma'}$.
\end{thm}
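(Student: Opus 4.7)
The plan is to use the idempotent decomposition coming from semi-simplicity and show that the $Z_i$ lie in pairwise disjoint direct summands of $QH'_\bullet(M)$. Write $QH_{2n}(M)=Q_1\oplus\cdots\oplus Q_d$ with primitive idempotents $e_1,\ldots,e_d$; extending scalars yields a decomposition $QH'_\bullet(M)=\bigoplus_\alpha e_\alpha QH'_\bullet(M)$. Since each $Q_\alpha$ is a field, Remark~\ref{rem-semisimplicity-genuine-qstate} makes each $\zeta_{e_\alpha}$ a genuine symplectic quasi-state with an Aarnes quasi-measure $\tau_\alpha$, and identifies heaviness with superheaviness with respect to $e_\alpha$. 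Put
\[
E_i:=\{\alpha\,:\,L_i\text{ is superheavy with respect to }e_\alpha\}.
\]

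First I would verify that the $E_i$ are non-empty and pairwise disjoint. Non-emptiness: $L_i$ is heavy with respect to $[M]=\sum_\alpha e_\alpha$ by Theorem~\ref{thm-Albers-elem-implies-heavy}, hence heavy (and thus superheavy) with respect to some $e_\alpha$ by Theorem~\ref{thm-diff-idempotents}(iii). Disjointness: if $\alpha\in E_i\cap E_j$ with $i\neq j$, then $\tau_\alpha(L_i)=\tau_\alpha(L_j)=1$, which contradicts the finite additivity of $\tau_\alpha$ on the disjoint closed sets $L_i,L_j$ since $\tau_\alpha(M)=1$. After relabeling we may assume $i\in E_i$ for $i=1,\ldots,m$.

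The core of the argument, which I expect to be the main obstacle, is the following spectral lemma: \emph{if $e_\alpha Z_i\neq 0$ then $\alpha\in E_i$}. The pairing $\Pi$ is block-diagonal with respect to the idempotent decomposition (since $e_\alpha e_\beta=\delta_{\alpha\beta}e_\alpha$ forces $a_\alpha*b_\beta=0$ for $\alpha\neq\beta$), so non-vanishing of $e_\alpha Z_i$ together with non-degeneracy of $\Pi$ produces some $b\in e_\alpha QH'_\bullet(M)$ with $\Pi(b,Z_i)\neq 0$, i.e.\ $b\in\Upsilon(Z_i)$. Combining the Poincar\'e duality formula applied to $Z_i$ with the Biran--Cornea estimate of Proposition~\ref{pro-BC}(iii), for autonomous $F\in C^\infty(M)$ and every $k\in\N$,
\[
c(b,-kF)\ \geq\ -c(Z_i,kF)\ \geq\ -k\sup_{L_i}F.
\]
On the other hand, writing $b=b*e_\alpha$ and using the triangle inequality together with $c(b,0)=\nu(b)$ yields $c(b,H)\leq\nu(b)+c(e_\alpha,H)$ for every $H$. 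Setting $G=-F$, dividing by $k$ and letting $k\to\infty$ these two bounds combine to
\[
\zeta_{e_\alpha}(G)\ \geq\ \inf_{L_i}G\qquad\forall G\in C^\infty(M),
\]
which is heaviness of $L_i$ with respect to $e_\alpha$; semi-simplicity upgrades this to superheaviness, so $\alpha\in E_i$.

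Granted the lemma, the conclusion is immediate. Each $Z_i=\sum_\alpha e_\alpha Z_i$ lies in $V_i:=\bigoplus_{\alpha\in E_i}e_\alpha QH'_\bullet(M)$, and the disjointness of the $E_i$ makes the $V_i$ into distinct summands of the $\cK_{\Gamma'}$-direct-sum decomposition $QH'_\bullet(M)=\bigoplus_\alpha e_\alpha QH'_\bullet(M)$. A relation $\sum_i\lambda_iZ_i=0$ with $\lambda_i\in\cK_{\Gamma'}$ therefore forces $\lambda_iZ_i=0$ for each $i$; since $Z_i=j(S_i)+O(w^{-1})\neq 0$ by Proposition~\ref{pro-BC}(i) and the Albers hypothesis, we conclude $\lambda_i=0$, as required.
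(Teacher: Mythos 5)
Your proof is correct and rests on exactly the same ingredients as the paper's: the estimate $c(e_\alpha * b,F)\geq \inf_{L_i}F$ obtained from Poincar\'e duality, Proposition~\ref{pro-BC}(iii) and the triangle inequality, followed by the heavy-implies-superheavy upgrade for field-factor idempotents and the intersection property of superheavy sets. The only difference is organizational: the paper argues by contradiction from a single dependence relation and extracts one idempotent $e_l$ for which two disjoint $L_i$'s would both be superheavy, whereas you prove the (formally stronger, but equivalent in substance) statement that the $Z_i$ lie in direct summands indexed by pairwise disjoint sets of primitive idempotents and read off independence directly.
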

\begin{proof} Arguing by contradiction, assume that
\begin{equation}
\label{eqn-j-L-1-lin-combin} Z_1 = \alpha_2 Z_2 + \ldots +
\alpha_m Z_m \end{equation} for some
$\alpha_2,\ldots,\alpha_m\in\cK_{\Gamma'}$. Since $QH_{2n} (M)$ is
semi-simple, it decomposes into a direct sum of fields with
unities $e_1,\ldots,e_d$. Since the pairing $\Pi$ (on $QH'_\bullet
(M; \cF)$) is non-degenerate, there exists $T\in QH'_\bullet (M;
\cF)$ such that
\begin{equation}
\label{eqn-exists-T} \Pi (T, Z_1)\neq 0.
\end{equation}
Let us write $T$ as
\begin{equation}
\label{eqn-T-as-a-lin-comb} T = [M] * T = \sum_{i=1}^d e_i*T.
\end{equation}
Equations \eqref{eqn-exists-T}, \eqref{eqn-T-as-a-lin-comb} imply
that there exists $l \in [1,d]$ such that
\begin{equation}
\label{eqn-exists-l}\Pi (e_l*T, Z_1)\neq 0\;.
\end{equation}
Then \eqref{eqn-j-L-1-lin-combin} implies that there exists $r \in
[2,m]$ such that
$$\Pi (e_l * T,\alpha_r Z_r)\neq 0.$$
Using \eqref{eqn-Pi-Frobenius} (for $\Pi$ on $QH'_\bullet (M;
\cF)$) we can rewrite the last equation as
\begin{equation}
\label{eqn-exists-r} \Pi (e_l* \alpha_r T, Z_r)\neq 0.
\end{equation}
Applying now formula \eqref{eq-3-1} for $S= Z_1\in H_\bullet
(L_1;\cF)$, $a=e_l$, $b =T$, and also for $S= Z_r\in H_\bullet
(L_r;\cF)$, $a=e_l$, $b =\alpha_r T$, we conclude that both $L_1$
and $L_r$ are heavy with respect to $e_l$. Thus they are
superheavy with respect to $e_l$, because $e_l$ is the unity in a
field factor of $QH_{2n} (M)$ (see
Section~\ref{subsec-effect-semisimp}). Hence they must intersect
-- in contradiction to the assumption of the theorem. This
finishes the proof of the first part of the theorem.
\end{proof}

\medskip
\noindent {\bf Proof of Theorem~\ref{thm-lagr-semismp-a}(a):}
Assume that $L_1,\ldots,L_m$ are pair-wise disjoint Lagrangian
submanifolds satisfying the condition (a) from the formulation of
the theorem. Denote by $N_i$ the minimal Maslov number of $L_i$.
Since $N_i > n+1$, the class of a point from $H_0(L_i;\cF)$ is an
Albers element for $L_i$. Let $Z_i \in QH'_0(M)$ be its image
under the Biran-Cornea inclusion morphism associated to $L_i$.
Note that by Proposition~\ref{pro-BC}(i) $Z_i = p + a_i w_i^{-1}$,
where $w_i= s^{\kappa N_i/2}q^{N_i/2}$, $a_i \in H_{N_i}(M;\cF)$
and $p\in H_0 (M; \cF)$ is the homology class of a point. Observe
that $\deg w_i = N_i
> n+1$, and hence the expression for $Z_i$ cannot contain terms in
$w_i^{-1}$ of order two and higher, since $H_{kN_i}(M;\cF)=0$ for
$k \geq 2$.

Recall now that all $N_i$'s lie in some set $Y$ of positive
integers. Let $W \subset QH'_0(M)$ be the span over
$\cK_{\Gamma'}$ of $$H_0(M;\cF) \oplus \bigoplus_{E \in Y}
s^{-\kappa E/2}q^{-E/2} \cdot H_E(M;\cF) \;.$$ Note that
$$\dim_{\cK_{\Gamma'}} W = \beta_Y(M)+1 < m\;.$$
Thus the elements $Z_i$, $i=1,\ldots,m$, are linearly dependent
over $\cK_{\Gamma'}$. By Theorem~\ref{thm-semis-gen}, $QH_{2n}(M)$
is not semi-simple. \qed

\medskip
\noindent {\bf Proof of Theorem~\ref{thm-lagr-semismp-a}(b):} Assume
that $L_1,\ldots,L_m$ are pair-wise disjoint homologically
non-trivial Lagrangian submanifolds. By Proposition \ref{pro-BC}(ii)
$j^q([L_i]) = j([L_i])$ for every $i=1,\ldots,m$. Since the classes
$j([L_i])$ are linearly dependent, Theorem~\ref{thm-semis-gen}
implies that $QH_{2n}(M)$ is not semi-simple. \qed


\medskip
\noindent {\bf Proof of Theorem~\ref{thm-monot-superh}:} Combining
Proposition~\ref{pro-BC} (ii) and (iii) we get that for any $H \in
C^{\infty}(M)$
$$c(j([L]),H) \leq \sup_L H \;\;\forall H \in C^{\infty}(M)\;.$$
By the hypothesis of the theorem, we can write $j([L])*b =a$ for
some $b$. Then
$$c(a,H) = c(j([L])*b,H) \leq c(j([L]),H) + c(b,0)\;.$$
Thus
$$c(a,H) \leq \sup_L H + c(b,0)\;.$$
Applying this inequality to $E\cdot H$ with $E >0$, dividing by
$E$ and passing to the limit as $E \to +\infty$ we get that
$\zeta(H) \leq \sup_L H$ for all $H$. Thus $L$ is superheavy. \qed

\medskip
\noindent
\begin{rem}\label{rem-BC-gen} {\rm The results above admit the following
generalizations in the framework of the Biran-Cornea theory. The
main object of this theory is the quantum homology ring $QH_* (L)$
of a monotone Lagrangian submanifold, which is isomorphic to the
Lagrangian Floer homology $HF_* (L,L)$ of $L$ up to a shift of the
grading.
\begin{itemize}

\item[{(i)}] If $QH_* (L)$ does not vanish then $L$ is heavy (see
Remark 1.2.9b in \cite{BC}). In fact, it follows from \cite{BC}
that if $L$ satisfies the Albers condition, $QH_* (L)$ does not
vanish.

\item[{(ii)}] The map $j^q$ of the Proposition~\ref{pro-BC} above
is a footprint of the quantum inclusion map $i_L: QH_* (L) \to QH'_*
(M)$ constructed in \cite{BC}. The analogue of the action estimate
in item (iii) of the proposition is obtained in \cite{BC} for the
classes $i_L(x)$ for elements $x\in QH_* (L)$ of a certain special
form, yielding the following generalization of
Theorem~\ref{thm-monot-superh}: for these special classes $x \in
QH_* (L)$ the condition that the class $i_L(x)$ does not vanish and
divides a non-trivial idempotent $a$ implies that $L$ is superheavy
with respect to $a$. This enables, for instance, to generalize
Example~\ref{exam-quadric} on Lagrangian spheres in quadrics above
to the case when $\dim L$ is odd. \item[{(iii)}] In \cite{BC} one
can find another action estimate which comes from the $QH_*
(M)$-module structure on $QH_* (L)$, which yields more results on
(super)heaviness of monotone Lagrangian submanifolds.
\end{itemize}}
\end{rem}

\medskip \noindent
{\bf Proof of Proposition~\ref{prop-T2n-blown-up-at-one-point}:}
The quantum homology $QH_{2n} (M)$ splits as an algebra over $\cK$
into a direct sum of two algebras one of which is a field. This
was proved by McDuff  for the field $\cF = \C$ (see
\cite{McDuff-uniruled} and \cite[Section 7]{EP-toric-proc}), but
the proof goes through for the case $\cF=\Z_2$ as well. Denote the
unity of the field by $a$. It is a non-zero idempotent in $QH_{2n}
(M)$. As we already pointed out in
Remark~\ref{rem-semisimplicity-genuine-qstate}, such an idempotent
$a$ defines a genuine symplectic quasi-state and therefore the
classes of heavy and superheavy sets with respect to $a$ coincide.

By Theorem~\ref{thm-intro-heavy}, the Lagrangian torus $L\subset
M$ cannot be superheavy with respect to $a$, since it can be
displaced from itself by a symplectic (non-Hamiltonian) isotopy.
Indeed, take an obvious symplectic isotopy $\phi_t$ of $\T^{2n}$
that displaces $L$ (a parallel shift) and compose it with a
Hamiltonian isotopy $\psi_t$ so that for every $t$ we have that
$\psi_t$ is constant on $\phi_t (L)$ and $\psi_t \phi_t$ is
identity on the ball where the blow up of $\T^{2n}$ was performed.
Clearly, the resulting symplectic isotopy $\psi_t \phi_t$ extends
to a symplectic isotopy of $M$ that displaces $L$.

On the other hand, $N_L\geq 2$ because in this case $N_L = 2N$,
where $N\geq 1$ is the minimal Chern number of $M$. Finally, note
that $L$ represents a non-trivial homology class in $H_{n} (M;
\Z_2)$. Therefore we can apply
Theorem~\ref{thm-Albers-elem-implies-heavy} and get that $L$ is
heavy with respect to $[M]$.\Qed

\section{Rigidity of special fibers of Hamiltonian actions}
\label{sect-pf-thm-part-qstate}

 In this section we prove
Theorem~\ref{thm-main-nondispl-toric}. Denote the special fiber of
$\Phi$ by $L:=\Phi^{-1} ({p}_{spec})$.

\medskip
\noindent {\sc Reduction to the case of $\T^1$-actions:} First, we
claim that it is enough to prove the theorem for Hamiltonian
$\T^1$-actions and the general case will follow from it. Indeed,
assume this is proved. The superheaviness of the special fiber
immediately yields that for any function $\bar{H}: \R \to \R$
\begin{equation}\label{eq-spec-fiber-T1-action}
\zeta(\Phi^* \bar{H}) = \bar{H} ({p}_{spec}),
\end{equation}
where $\Phi: M\to\R$ is the moment map of the $\T^1$-action.

Let us turn to the multi-dimensional situation and let $\Phi: M
\to \R^k$ be the normalized moment map of a Hamiltonian $\T^k$-action on $M$.
For a ${\bf v} \in \R^k$ denote by $\Phi_{\bf v} ({\bf x}) =
\langle {\bf v},\Phi({\bf x})\rangle$, where $\langle\cdot,\cdot
\rangle$ is the standard Euclidean inner product on $\R^k$. Note
that if ${\bf v} \in \Z^k$ the function $\Phi_{\bf v}$ is the normalized moment map of a
Hamiltonian circle action and its special value
is $\langle {\bf v},
{p}_{spec}\rangle$. Thus by \eqref{eq-spec-fiber-T1-action}
\begin{equation}\label{eq-spec-fiber-T1-action-2}
\zeta(\Phi_{\bf v}^* {K}) = {K} (\langle {\bf v},
{p}_{spec}\rangle) \;\forall {K} \in C^{\infty}(\R)\;.
\end{equation}
By homogeneity of $\zeta$, equality \eqref{eq-spec-fiber-T1-action-2}
holds for all ${\bf v}
\in \Q^k$, and by continuity for all ${\bf v} \in \R^k$.

Observe that for each pair of smooth functions ${P}, {Q}\in
C^\infty (\R)$ and for each pair of vectors ${\bf v},{\bf w} \in
\R^k$ the functions $\Phi_{\bf v}^*P$ and $\Phi_{\bf w}^*Q$
Poisson-commute on $M$. Thus the triangle inequality for the
spectral numbers (see Section~\ref{sec-spectral-numbers}) yields
\begin{equation}\label{eq-spec-fiber-T1-action-3}
\zeta(\Phi_{\bf v}^*{P} +\Phi_{\bf w}^* {Q})\leq \zeta(\Phi_{\bf
v} ^* {P}) +\zeta(\Phi_{\bf w}^* {Q})\;.
\end{equation}
Since $M$ is compact, it suffices to assume that the function
$\bar{H} \in C^{\infty}(\R^k)$ on $\R^k$ is compactly supported.
By the inverse Fourier transform we can write
$$\bar{H} ({p})
= \int_{\R^k} \big\{ \sin \langle {\bf v}, {p} \rangle \cdot
{F}({\bf v}) + \cos \langle {\bf v}, {p} \rangle \cdot {G} ( {\bf
v} ) \big\} d{\bf v}$$ for some rapidly (say, faster than
$(|p|+1)^{-N}$ for any $N \in \N$) decaying  functions ${F}$ and
${G}$ on $\R^k$. For every ${\bf v} \in \R^k$ define a function
$K_{\bf v}\in C^{\infty}(\R)$ by
$$K_{\bf v}(s):= \sin s \cdot F({\bf v}) + \cos s \cdot G({\bf v})\;.$$
Observe that
$$\Phi^* \bar{H} = \int_{\R^k} \Phi_{\bf v}^*K_{\bf v} \; d{\bf v}\;.$$
Denote by $B(R)$ the Euclidean ball of radius $R$ in $\R^k$ with
the center at the origin. Put
$$\bar{H}_{R}(p) = \int_{B(R)} K_{\bf v}(\langle {\bf v},p\rangle) \; d{\bf v},\; p \in \R^k\;.$$
Since the functions $F$ and $G$ are rapidly decaying, we get that
\begin{equation}\label{eq-Forier-vsp2}
||\bar{H}_R - \bar{H}||_{C^0(\R^k)} \to 0 \;\;\text{as}\;\; R \to
\infty\;.
\end{equation}
We claim that for every $R$
\begin{equation}\label{eq-Fourier-1}
\zeta(\Phi^*\bar{H}_R) \leq \bar{H}_R (p_{spec})\;.
\end{equation}
Indeed, for $\epsilon > 0$ introduce the integral sum
$$\bar{H}_{R,\varepsilon} (p)  = \sum_{{\bf v} \in \; \varepsilon \cdot
\Z^k\cap B(R)} \varepsilon^k \cdot K_{\bf v}(\langle {\bf
v},p\rangle    )\;.$$ Then
$$\Phi^*\bar{H}_{R,\varepsilon} = \sum_{{\bf v} \in \; \varepsilon \cdot
\Z^k\cap B(R)} \varepsilon^k \cdot \Phi_{\bf v}^*K_{\bf v}\;.$$
Applying
 repeatedly \eqref{eq-spec-fiber-T1-action-3} and
\eqref{eq-spec-fiber-T1-action-2} we get that
$$\zeta(\Phi^*\bar{H}_{R,\varepsilon}) \leq \bar{H}_{R,\varepsilon}(p_{spec})\;.$$
Note now that for fixed $R$ the family $\bar{H}_{R,\epsilon}$
converges to $\bar{H}_R$ as $\varepsilon \to 0$  in the uniform norm
on $C^0(\R^k)$. Using that $\zeta$ is Lipschitz with respect to the
uniform norm on $C^0(M)$ we readily get the inequality
\eqref{eq-Fourier-1}.

Combining the fact that $\zeta$ is Lipschitz with
\eqref{eq-Forier-vsp2} and \eqref{eq-Fourier-1} we get that
$$\zeta(\Phi^*\bar{H}) = \lim_{R \to \infty} \zeta(\Phi^*\bar{H}_R) \leq \lim_{R \to
\infty} \bar{H}_R (p_{spec})= \bar{H} (p_{spec})\;. $$ Now, assume
that $\bar{H} \geq 0$ and $\bar{H} ({p}_{spec})=0$. We just have
proved that $\zeta(\Phi^* \bar{H}) \leq 0$, and hence $\zeta(H)=0$,
which immediately yields the desired superheaviness of the special
fiber. This completes the reduction of the general case to the
1-dimensional case.

{\bf From now on we will consider only the case of an effective
Hamiltonian $\T^1$-action on $M$ with a moment map $\Phi: M\to
\R$. Its moment polytope $\Delta$ is a closed interval in $\R$ and
${p}_{spec} = - I (\Phi) \in \R$.}

\medskip
\noindent {\sc Reduction to the case of a strictly convex
function:} We claim that it is enough to show the following
proposition:

\begin{prop}
\label{prop-toric-main} Assume $\bar{H}: \R \to \R$ is a strictly
convex smooth function reaching its minimum at ${p}_{spec}$. Set
$H:= \Phi^* \bar{H}$. Then $\zeta (H) = \bar{H} ({p}_{spec})$.

\end{prop}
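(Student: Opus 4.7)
The lower bound $\zeta(H) \geq \bar{H}(p_{spec})$ is immediate from monotonicity together with the normalization $\zeta(\alpha) = \alpha$ for constants $\alpha$, since $H \geq \min H = \bar{H}(p_{spec})$ on $M$. For the upper bound it suffices to treat the case $a = [M]$: if the partial quasi-state associated to the idempotent $[M]$ satisfies $\zeta(H) \leq \bar{H}(p_{spec})$, then $L := \Phi^{-1}(p_{spec})$ is superheavy with respect to $[M]$ (since $H|_L \equiv \bar{H}(p_{spec})$), whence by Theorem~\ref{thm-diff-idempotents}(i) $L$ is superheavy with respect to every non-zero idempotent, so the partial quasi-state associated to any such idempotent also obeys $\zeta(H) \leq \sup_L H = \bar{H}(p_{spec})$. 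The task thus reduces to establishing the asymptotic estimate
\[
c([M], lH) \leq l\bar{H}(p_{spec}) + O(1) \quad \text{as } l \to +\infty.
\]

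The approach is to analyze the Floer complex of $lH$ through its equivariant Morse--Bott structure. Because $\bar{H}'(p_{spec}) = 0$, the Hamiltonian vector field of $lH$ vanishes identically on $L$, while $\bar{H}''(p_{spec}) > 0$ supplies a rank-one positive-definite normal Hessian, making $L$ a clean Morse--Bott critical submanifold of the action functional. After perturbing $lH$ by $\varepsilon f$ with $f|_L$ Morse, the $1$-periodic orbits of the perturbed flow localized near $L$ become non-degenerate and are parameterized by the critical points of $f|_L$, each with action $l\bar{H}(p_{spec}) + O(\varepsilon)$. The remaining $1$-periodic orbits localize near the fibers $\Phi^{-1}(p_{k,l})$ with $l\bar{H}'(p_{k,l}) = k \in \Z \setminus \{0\}$; strict convexity of $\bar{H}$ forces their actions $l\bar{H}(p_{k,l})$ to exceed $l\bar{H}(p_{spec})$ by at least $c_0 \cdot l$ for some $c_0 > 0$ depending on $\bar{H}$ and the range of $k$ involved.

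Equip each localized orbit with a canonical equivariant capping obtained from the cone of the $\T^1$-orbit to a fixed point of the torus action. Using the quasi-morphism property of the Conley--Zehnder index (Proposition~\ref{prop-CZ-qmm}) together with the identity $p_{spec} = -I(\Phi)$, one asymptotically computes the Conley--Zehnder indices of these capped orbits: the mixed action--Maslov invariant $I(\Phi)$ records precisely the failure of the Conley--Zehnder index to be additive under iteration with the Hamiltonian loop $\psi_\Phi$ generated by $\Phi$, and the relation $p_{spec} = -I(\Phi)$ allows one to offset this failure against the action contributions. Combining a critical point of $f|_L$ of a suitable Morse index with an appropriate power of $q$ in the Novikov ring then yields a Floer chain $z_l$ of total degree $2n$ and action at most $l\bar{H}(p_{spec}) + O(1)$. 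Since the Floer differential strictly decreases action and the orbits at the higher levels $p_{k,l}$, $k \neq 0$, lie strictly above the filtration threshold $l\bar{H}(p_{spec}) + O(1)$, the class $[z_l]$ survives as a non-trivial element of the filtered Floer homology up to this level.

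The central obstacle is identifying $[z_l]$ with the PSS image of $[M] \in QH_{2n}(M)$, rather than with some proper submultiple or a Novikov-shifted homology class. This requires a Morse--Bott spectral sequence argument for the low-action portion of the complex, exploiting that $p_{spec}$ lies in the interior of the moment polytope so that $L$ is a principal orbit of the $\T^1$-action; the combination of the Morse homology of $f|_L$ with the Novikov shifts, which are in turn controlled by $p_{spec} = -I(\Phi)$, then assembles into the fundamental-class contribution to $QH_{2n}(M)$ at top filtration level. Granted this identification, $c([M], lH) \leq l\bar{H}(p_{spec}) + O(1)$; dividing by $l$ and passing to the limit $l \to +\infty$ yields $\zeta(H) \leq \bar{H}(p_{spec})$, completing the proof.
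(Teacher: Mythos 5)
Your lower bound and the reduction to $a=[M]$ via Theorem~\ref{thm-diff-idempotents}(i) are fine, but the core of your upper bound does not work as described. First, the claimed action separation is false: for the fibers $\Phi^{-1}(p_{k,l})$ with $l\bar{H}'(p_{k,l})=k$ and $k\neq 0$ fixed, one has $p_{k,l}\to p_{spec}$ as $l\to\infty$ and $l\bigl(\bar{H}(p_{k,l})-\bar{H}(p_{spec})\bigr)\sim k^2/\bigl(2l\,\bar{H}''(p_{spec})\bigr)\to 0$, so these orbits are not separated from the orbits on $L$ by $c_0\cdot l$, nor even by a constant. Second, and more fundamentally, because of recapping and Novikov shifts the filtration level at which an orbit $\gamma\subset\Phi^{-1}(p)$ contributes in degree $2n$ is governed not by $l\bar{H}(p)$ but by the capping-independent quantity $D_{lH}(\gamma)=\cA_{lH}-\tfrac{\kappa}{2}CZ_{lH}$; the paper's computation \eqref{eqn-concav} shows $D_{lH}(\gamma)\approx l\bar{H}(p)+l\bar{H}'(p)(p_{spec}-p)$, which by the tangent-line inequality for the convex function $\bar{H}$ is \emph{at most} $l\bar{H}(p_{spec})+O(1)$. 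So in the relevant degree the generators away from $L$ sit at or below your threshold, not above it: convexity enters through $\bar{H}(p)+\bar{H}'(p)(p_{spec}-p)\leq\bar{H}(p_{spec})$, not through $\bar{H}(p)\geq\bar{H}(p_{spec})$, and your Morse--Bott picture of the low-action subcomplex as the Morse complex of $f|_L$ is not available.

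The remaining step you flag as "the central obstacle" --- identifying $[z_l]$ with the PSS image of $[M]$ --- is indeed the crux, and it is left unproved; as it stands the argument is incomplete precisely there. The paper avoids this identification altogether by running the argument in the opposite direction: by the spectrality axiom, $c(a,F)$ for a regular perturbation $F$ of $lH$ is attained at a capped orbit of Conley--Zehnder index $2n$, hence $c(a,lH)\approx D_{lH}(\gamma)$ for \emph{some} $\gamma\in\cP_{lH}$ (via Proposition~\ref{prop-main-1} and Proposition~\ref{prop-multiple-orbit}), and it then suffices to bound $D_{lH}(\gamma)$ from above uniformly over \emph{all} $1$-periodic orbits, which is \eqref{eqn-EH-preceq-D-EH}. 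The quasi-morphism property of $CZ_{matr}$ (Proposition~\ref{prop-CZ-qmm}) and the identity $p_{spec}=-I(\Phi)$ are used exactly in establishing \eqref{eqn-concav}, i.e.\ in evaluating $D_{lH}$ on each fiber, not in constructing a distinguished cycle. No representative of $\phi_M(a)$ ever needs to be exhibited, and the argument works for an arbitrary idempotent $a\in QH_{2n}(M)$ without the reduction to $[M]$.
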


\medskip
\noindent Postponing the proof of the proposition for a moment let
us show that it implies the theorem. Indeed, let $F: M\to \R$ be a
Hamiltonian on $M$. In order to show the superheaviness of
$L=\Phi^{-1} ({p}_{spec})$ we need to show that $\zeta (F)\leq
\sup_L F$. Pick a very steep strictly convex function $\bar{H}:
\R\to\R$ with the minimum value $\sup_L F$ reached at ${p}_{spec}$
and such that $\Phi^* \bar{H} =: H\geq F$ everywhere on $M$. Then
using Proposition~\ref{prop-toric-main} and the monotonicity of
$\zeta$ we get
$$\zeta (F)\leq \zeta (H) = \bar{H} ({p}_{spec}) = \sup_L F,$$
yielding the claim.

\medskip
\noindent {\sc Preparations for the proof of
Proposition~\ref{prop-toric-main}:} Given a (time-dependent, not
necessarily regular) Hamiltonian $G$, we associate to every pair
$[\gamma, u]\in \tP_G$ a number
\[
D_G ([\gamma, u]) := \cA_G ([\gamma, u]) - \frac{\kappa}{2}\cdot
CZ_G ([\gamma, u]).
\]
(Recall that we defined the Conley-Zehnder index for {\it all}
Hamiltonians and not only the regular ones -- see
Section~\ref{subsubsec-CZ-Maslov}). The number $D_G ([\gamma, u])$ is
invariant under a change of the spanning disc $u$ -- an addition
of a sphere $jS\in H_2^S (M)$ to the disc $u$ changes both $\cA_G
([\gamma, u])$ and $\kappa/2\cdot CZ_G ([\gamma, u])$ by the same
number. Thus we can write $D_G ([\gamma, u]) = D_G (\gamma)$.

Given $[\gamma, u]\in \tP_G$ and $l\in \N$ define $\gamma^{(l)}$
and $u^{(l)}$ as the compositions of $\gamma$ and $u$ with the map
$z\to z^l$ on the unit disc $\D^2\subset \C$ (here $z$ is a
complex coordinate on $\C$). Denote by $t\mapsto g_t$ the time-$t$
flow of $G$ and by $ G^{(l)}: M\times \R\to \R$ the Hamiltonian
whose time-$t$ flow is $t\mapsto (g_t )^l$ and which is defined by
$$ G^{(l)} := G \sharp\ldots \sharp G\ \ (l\ {\rm times}),$$
where $G\sharp K (x, t) := G (x, t) + K (g_t^{-1} x, t)$ for any
$K: M\times \R\to \R$.

\begin{prop}
\label{prop-multiple-orbit}

There exists a constant $C>0$, depending only on
$n$, with the following property. Given a 1-periodic orbit
$\gamma\in \cP_G$ of the flow $t\mapsto g_t$ generated by $G$,
assume that $\gamma^{(l)}$ is a 1-periodic orbit of the flow
$t\mapsto g_t^l$ generated by $G^{(l)}$, and therefore for any $u$
such that $[\gamma, u]\in \tP_G$ we have $[\gamma^{(l)},
u^{(l)}]\in \tP_{G^{(l)}}$.
Then
\[
| D_{G^{(l)}} ([\gamma^{(l)}, u^{(l)}]) - l D_{G} ([\gamma,
u])|\leq l\cdot C.
\]

\end{prop}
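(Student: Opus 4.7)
The plan is to split $D_G=\cA_G-(\kappa/2)CZ_G$ and estimate each summand separately by replacing the ``power'' representative path $\{\phi_t^l\}$ of $\tilde\phi_G^l\in\tHam(M)$ with a homotopic ``concatenation'' representative $\{\psi_t\}$, defined for $t\in[(k-1)/l,k/l]$ and $k=1,\ldots,l$ by $\psi_t:=\phi_1^{k-1}\circ\phi_{lt-(k-1)}$. The paths $\{\psi_t\}$ and $\{\phi_t^l\}$ are homotopic rel.\ endpoints in $\Ham(M)$, so their normalized generating Hamiltonians $K$ and $G^{(l)}$ represent the same class in $\tHam(M)$; under the hypothesis $\gamma^{(l)}(t)=\phi_t^l(\gamma(0))$ the orbits $\gamma_\psi(t):=\psi_t(\gamma(0))$ and $\gamma^{(l)}$ are freely homotopic loops in $M$ based at $\gamma(0)$. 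Choosing spanning discs in compatible $\tcL$-classes, $[\gamma^{(l)},u^{(l)}]$ and $[\gamma_\psi,u_\psi]$ differ by at most the action of a sphere $S\in H_2^S(M)$, and by spherical monotonicity the functional $D$ is $H_2^S$-invariant, so this ambiguity contributes nothing to the final estimate.

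For the action I would take $u_\psi:=(\phi_1^0 u)\sharp(\phi_1^1 u)\sharp\cdots\sharp(\phi_1^{l-1} u)$, so that $\int u_\psi^*\omega=l\int u^*\omega$ since each $\phi_1^{k-1}$ is a symplectomorphism. On the $k$-th interval one has $K_t=lG_s\circ\phi_1^{-(k-1)}$ with $s=lt-(k-1)$, and $\psi_t(\gamma(0))=\phi_1^{k-1}(\gamma(s))$; hence $K_t(\psi_t(\gamma(0)))=lG(\gamma(s),s)$, and substituting $s=lt-(k-1)$ makes each of the $l$ pieces contribute $\int_0^1 G(\gamma(s),s)\,ds$, giving
\[
\int_0^1 K_t(\psi_t(\gamma(0)))\,dt \;=\; l\int_0^1 G(\gamma(s),s)\,ds.
\]
Therefore $\cA_K([\gamma_\psi,u_\psi])=l\,\cA_G([\gamma,u])$ on the nose.

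For the Conley--Zehnder term I fix a symplectic trivialization of $u^*TM$, producing an identity-based path $\{A(t)\}$ in $Sp\,(2n)$ with class $\tilde A\in\widetilde{Sp\,(2n)}$ and $CZ_G([\gamma,u])=n-CZ_{matr}(\tilde A)$. Propagating this trivialization by $d\phi_1^{k-1}$ over each sub-disc $u_k$, then smoothing the gluing transitions $B^{k-1}\mapsto B^k$ (with $B:=A(1)$) by any fixed continuous path in $Sp\,(2n)$, yields a trivialization of $u_\psi^*TM$ in which the identity-based path associated to the linearization of $\{\psi_t\}$ at $\gamma(0)$ is the $l$-fold concatenation of $\{A(t)\}$, hence represents $\tilde A^l\in\widetilde{Sp\,(2n)}$. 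The homotopy from $\{\psi_t\}$ to $\{\phi_t^l\}$ in $\Ham(M)$ lifts to an $Sp\,(2n)$-valued homotopy of the corresponding identity-based paths, so the path coming from $d\phi_t^l(\gamma(0))$ in the trivialization of $(u^{(l)})^*TM$ also represents $\tilde A^l$, up to a sphere correction that adds $2c_1(S)$ to $CZ$ and $\omega(S)=\kappa c_1(S)$ to $\cA$ and so cancels in $D$. Proposition~\ref{prop-CZ-qmm} now gives
\[
|CZ_{matr}(\tilde A^l)-l\,CZ_{matr}(\tilde A)|\le(l-1)C_n
\]
with $C_n$ depending only on $n$; combining this with the exact action identity yields $|D_{G^{(l)}}([\gamma^{(l)},u^{(l)}])-lD_G([\gamma,u])|\le l(\kappa/2)(C_n+n)$.

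I expect the main obstacle to be the trivialization bookkeeping in the third paragraph: one must verify carefully that the concatenated-and-smoothed trivialization of $u_\psi^*TM$ is a genuine continuous trivialization, and that the identity-based path it induces for $\{\psi_t\}$ is homotopic, through identity-based paths in $Sp\,(2n)$, to the literal $l$-fold concatenation of $\{A(t)\}$, so that Proposition~\ref{prop-CZ-qmm} applies and yields an error bound depending only on $n$.
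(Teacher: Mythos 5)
Your two computational pillars are exactly the ones the paper uses: the action term scales by $l$ on the nose, and the Conley--Zehnder defect is controlled by the quasi-morphism property of $CZ_{matr}$ (Proposition~\ref{prop-CZ-qmm}), with the recapping ambiguity killed by the monotonicity relation $\omega=\kappa c_1$ on spheres. The paper, however, computes \emph{directly} with $G^{(l)}$ and the capped orbit $[\gamma^{(l)},u^{(l)}]$: the area of $u^{(l)}$ is $l$ times that of $u$ because $z\mapsto z^l$ has degree $l$, the Hamiltonian integral of $G^{(l)}=G\sharp\cdots\sharp G$ along $\gamma^{(l)}$ is $l$ times that of $G$ along $\gamma$, and the matrix path of the linearized flow in the trivialization induced by $u^{(l)}$ represents the $l$-th power in $\widetilde{Sp\,(2n)}$ of the path attached to $[\gamma,u]$, so Proposition~\ref{prop-CZ-qmm} applies immediately.

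The genuine gap in your version is the transfer step from the concatenation data $(K,\gamma_\psi,u_\psi)$ back to $(G^{(l)},\gamma^{(l)},u^{(l)})$. The sentence ``$[\gamma^{(l)},u^{(l)}]$ and $[\gamma_\psi,u_\psi]$ differ by at most the action of a sphere'' is not meaningful as stated: $\gamma_\psi$ and $\gamma^{(l)}$ are different loops in $M$ (different parametrized curves, in general with different images), lying over different points of $\cL$, and they are $1$-periodic orbits of \emph{different} Hamiltonians; the deck group $H_2^S(M)$ only relates cappings of one and the same loop. What you actually need is the statement that a homotopy rel endpoints of Hamiltonian paths induces a bijection between capped $1$-periodic orbits preserving both the action and the Conley--Zehnder index. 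That is true, but it is not among the listed properties of spectral invariants, it requires the Hamiltonians to be normalized (or at least to have equal fiberwise means as functions of $t$ -- and $\int_M K_t$ and $\int_M G^{(l)}_t$ agree only after integrating over $t$, since $G$ is not assumed normalized in the proposition), and you neither state nor prove it; ``freely homotopic loops'' is far too weak, since any two contractible loops are freely homotopic while their actions differ wildly. Either supply this orbit-wise homotopy-invariance lemma with the normalization bookkeeping, or drop the detour and argue directly with $G^{(l)}$ as the paper does -- your third-paragraph trivialization analysis (with constant transition matrices $B^{-(k-1)}$ at the junctions rather than ``smoothing'', which already produces a continuous representative of $\tilde A^{\,l}$) then carries over essentially verbatim.
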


\bigskip
\noindent \begin{proof} The action term in $D_G$ gets multiplied
by $l$ as we pass from $G$ to $G^{(l)}$. As for the Conley-Zehnder
term, the quasi-morphism property of the Conley-Zehnder index (see
Proposition~\ref{prop-CZ-qmm}) implies that there exists a
constant $C
>0$ (depending only on $n$) such that
$$| l CZ_G [\gamma, u] - CZ_{G^{(l)}}
([\gamma^{(l)}, u^{(l)}])|\leq C.$$
This immediately proves the proposition.\end{proof}

\begin{prop}
\label{prop-main-1}

Let $G: M\times [0,1]\to\R$ be a Hamiltonian as above. Then one
can choose $\epsilon >0$, depending on $G$, and a constant $C_n >
0$, depending only on $n= {\rm dim}\, M/2$, so that any function
$F: M\times [0,1]\to \R$ which is $\epsilon$-close to $G$ in a
$C^\infty$-metric on $ C^\infty(M\times [0,1])$ satisfies the
following condition: for every $\gamma_0\in \cP_F$ there exists
$\gamma\in \cP_{G}$ such that the difference between $D_F
(\gamma_0)$ and $D_{G} (\gamma)$ is bounded by $C_n$.

\end{prop}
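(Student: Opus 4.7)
The plan is to match every $\gamma_0 \in \cP_F$ with a nearby orbit $\gamma \in \cP_G$ via continuity of the flow, and then to estimate the two ingredients of $D$ — the action functional and the Conley–Zehnder index — separately. Continuity of the flow and a spanning-disc comparison will handle the action term with an error of order $\epsilon$. The Conley–Zehnder term, being an integer-valued invariant, is \emph{not} continuous in $F$; but the size of its jumps under $C^0$-small perturbations of the corresponding matrix path in $Sp(2n)$ is uniformly bounded by a constant depending only on $n$, which is exactly Remark~\ref{rem-Ind-CZ-matr-bounded-change-under-perturbations}. This bounded jump will provide the required constant $C_n$.

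To realize this, first observe that $\Fix(G)$ is compact, so by a standard ODE argument one may choose $\epsilon_1>0$ such that $\Fix(F)$ lies in any prescribed neighborhood of $\Fix(G)$ whenever $\|F-G\|_{C^\infty}<\epsilon_1$ (bound $|g_1(x)-x|$ from below outside the neighborhood by compactness, then transfer the bound to $f_1$ using $C^1$-proximity). Hence any $\gamma_0\in\cP_F$ lies $C^0$-close to some $\gamma\in\cP_G$. Fix a spanning disc $u$ for $\gamma$, cobound $\gamma$ and $\gamma_0$ by a thin cylinder $C$ inside a tubular neighborhood of $\gamma$ (which also shows $\gamma_0$ is contractible), and set $u_0 = u\#C$. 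Then
\[
\cA_F(\gamma_0,u_0)-\cA_G(\gamma,u) \;=\; \int_0^1\bigl(F(\gamma_0(t),t)-G(\gamma(t),t)\bigr)\,dt \;-\; \int_C \omega,
\]
and both terms are $O(\epsilon)$; in particular, the action difference can be made less than $1$ by shrinking $\epsilon$.

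For the Conley–Zehnder part, choose a symplectic trivialization of $TM$ on a tubular neighborhood of the image of $\gamma$ which simultaneously trivializes $\gamma^*TM$ and $\gamma_0^*TM$, and extend the trivialization of $u^*TM$ across $C$ to a compatible trivialization of $u_0^*TM$. Relative to these trivializations, $\{d_{\gamma(0)} g_t\}$ and $\{d_{\gamma_0(0)} f_t\}$ are identity-based paths in $Sp(2n)$ which are $C^0$-close, because $F$ is $C^\infty$-close to $G$ and $\gamma_0$ is $C^0$-close to $\gamma$. By Remark~\ref{rem-Ind-CZ-matr-bounded-change-under-perturbations} their $CZ_{matr}$-indices differ by at most a constant $C(n)$ depending only on $n$, whence $|CZ_F(\gamma_0,u_0)-CZ_G(\gamma,u)| \leq C(n)$. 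Combining with the action estimate gives $|D_F(\gamma_0)-D_G(\gamma)| \leq 1 + (\kappa/2)C(n)$, which defines the desired $C_n$. The main obstacle is this final step: the Conley–Zehnder index can genuinely jump under $C^0$-perturbations of the matrix path, and the whole argument relies on the fact — ultimately a consequence of the quasi-morphism property in Proposition~\ref{prop-CZ-qmm} — that these jumps are uniformly bounded in terms of $n$ alone.
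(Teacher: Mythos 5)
Your proposal is correct and follows essentially the same route as the paper: match each $\gamma_0\in\cP_F$ to a $C^\infty$-close orbit $\gamma\in\cP_G$, compare spanning discs across a thin connecting cylinder to control the action difference, and invoke the uniform $C(n)$-bound on jumps of $CZ_{matr}$ under $C^0$-small perturbations of the matrix paths (Remark~\ref{rem-Ind-CZ-matr-bounded-change-under-perturbations}) for the index term. The paper carries this out slightly more carefully by working with the graphs $\widehat\gamma$ in $\SP^1\times M$ (so the tubular neighborhood and primitive of $\omega$ are available even for constant or self-intersecting orbits), and note that the bounded-jump property is established there directly from the local structure of the crossing hypersurface rather than being a consequence of the quasi-morphism property, but these are cosmetic differences.
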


\begin{proof}

Denote the flow of $G$ by $g_t$ (as before) and the flow of $F$ by
$f_t$. We will view time-1 periodic trajectories of these flows
both as maps of $[0,1]$ to $M$ having the same value at $0$ and
$1$ and as maps from $\SP^1$ to $M$.

First, consider the fibration $\D^2 \times M \to M$ and, slightly
abusing notation, denote the natural pullback of $\omega$ again by
$\omega$. Second, look at the fibration $pr: \D^2\times M\to
\D^2$. Denote by $Vert$ the vertical bundle over $\D^2\times M$
formed by the tangent spaces to the fibers of $pr$. For each loop
$\sigma: \SP^1\to M$ define by $\widehat{\sigma}: \SP^1\to
\D^2\times M$ the map $\widehat{\sigma} (t) := (t, \gamma (t))$.
The bundles $\sigma^* TM$ and $\widehat{\sigma}^* Vert$ over
$\SP^1$ coincide. Similarly for each $w: \D^2\to M$ denote by
$\widehat{w}: \D^2\to \D^2\times M$ the map $\widehat{w} (z):= (z,
w (z))$.

There exists $\delta >0$, depending on $G$, such that for each
$\gamma\in \cP_{G}$ a tubular $\delta$-neighborhood of the image
of $\widehat{\gamma}$ in $\SP^1\times M\subset \D^2\times M$,
denoted by $U_{\widehat{\gamma}}$, has the following properties:

\begin{itemize}

\item{}
there exists a 1-form $\lambda$ on $U_{\widehat{\gamma}}$ satisfying
$d\lambda = \omega$;

\item{}
$Vert$ admits a trivialization over $U_{\widehat{\gamma}}$.

\end{itemize}

Given an $\epsilon > 0$, we can choose $F$ sufficiently
$C^\infty$-close to $G$ so that the paths $t\mapsto f_t$ and
$t\mapsto g_t$ in $\Ham  (M)$ are arbitrarily $C^\infty$-close and
therefore

\begin{itemize}

\item{} for every $x\in \Fix (F)$ there exists $y\in \Fix (G)$
which is $\epsilon$-close to $x$ (think of the fixed points as
points of intersection of the graph of a diffeomorphism with the
diagonal);

\item{}
the $C^\infty$-distance between the maps $\gamma_0: t\mapsto f_t
(x)$ and $\gamma: t\mapsto g_t (y)$ from $[0,1]$ to $M$ is bounded
by $\epsilon$ and the image of $\widehat{\gamma}_0$ lies in
$U_{\widehat{\gamma}}$.

\end{itemize}

Pick a map $u_0: \D^2\to M$, $\left. u\right|_{\partial \D^2} =
\gamma_0$. Since $\gamma_0$ and $\gamma$ are $C^\infty$-close one
can enlarge $\D^2$ to a bigger disc $\D^2_1\supset \D^2$ and find
a smooth map $u: \D^2_1\to M$ so that
\begin{itemize}
\item{} $\left. u\right|_{\partial \D^2_1} = \gamma$; \item{}
$\left. u\right|_{\D^2} = u_0$; \item{} $u (\D^2_1\setminus \D^2)
\subset U_{\widehat{\gamma}}$.

\end{itemize}
Rescaling $\D^2_1$ we may assume without loss of generality that
$[\gamma, u]\in \cP_{G}$.

Trivialize the vector bundles $\gamma_0^* TM$ and ${\gamma}^* TM$
so that the trivializations extend to a trivialization of $u^* TM$
over $\D_1^2$ (and hence of $u_0^* TM$ over $\D^2$). Using the
trivializations we can identify the paths $t\mapsto d_{\gamma_0
(0)} f_t$ and $t\mapsto d_{\gamma (0)} g_t$ with some
identity-based paths of symplectic matrices $A(t)$, $B(t)$. Fixing
a small $\epsilon$ as above, we can also assume that $F$ is chosen
so $C^\infty$-close to $G$ that, in addition to all of the above,
the $C^\infty$-distance between the paths $t\mapsto A(t)$ and
$t\mapsto B(t)$ in $Sp\, (2n)$ is bounded by $\epsilon$ (for
instance, make sure first that the matrix paths obtained by
writing the paths $t\mapsto d_{\gamma_0 (0)} f_t$ and $t\mapsto
d_{\gamma (0)} g_t$ using some trivialization of $Vert$ over
$U_{\widehat{\gamma}}$ are close enough -- then the matrix paths
$t\mapsto A(t)$ and $t\mapsto B(t)$ will also be close enough).

We claim that by choosing $\epsilon$ sufficiently small in the
construction above we can bound the difference between $D_F
([\gamma_0, u_0])$ and $D_{G} ([\gamma, u])$ by a quantity
depending only on ${\rm dim}\, M$.

Indeed, the difference $|\int_0^1 F (\gamma_0 (t), t) dt -
\int_0^1 G (\gamma (t)) dt |$ is bounded by a quantity depending
only on some universal constants and $\epsilon$, because
$\gamma_0$ is $\epsilon$-close to $\gamma$ and $F$ is
$\epsilon$-close to $G$ with respect to the $C^\infty$-metrics. It
can be made arbitrarily small by choosing a sufficiently small
$\epsilon$. The difference
$$|\int_{\D^2} u_0^* \omega - \int_{\D^2} {u}^* \omega| =
|\int_{\D^2} \widehat{u}_0^* \omega - \int_{\D^2} \widehat{u}^*
\omega|$$ is bounded by the difference $| \int_0^1
\widehat{\gamma}_0^* \lambda - \int_0^1 \widehat{\gamma}^* \lambda
|$. Since, $\gamma_0$ and $\gamma$ are $\epsilon$-close in the
$C^\infty$-metric the later difference can be made less than $1$
if we choose a sufficiently small $\epsilon$. Thus we have shown
that by choosing a sufficiently small $\epsilon$ we can bound
$|\cA_F ([\gamma_0, u_0]) - \cA_{G} ([\gamma, u])|$ by $1$.

Now, as far as the Conley-Zehnder indices are concerned, our
choice of the trivializations means that the difference between
$CZ_F ([\gamma_0, u_0])$ and $CZ_{G} ([\gamma, u])$ is just the
difference between the Conley-Zehnder indices for the matrix paths
$t\mapsto A(t)$ and $t\mapsto B(t)$. But the latter paths in $Sp\,
(2n)$ are $\epsilon$-close in the $C^\infty$-sense, hence
represent close elements of $\widetilde{Sp\, (2n)}$ and if
$\epsilon$ was chosen sufficiently small, then, as we mentioned in
Section~\ref{subsubsec-CZ-Maslov}, their Conley-Zehnder indices
differ at most by a constant depending only on $n$.

This finishes the proof of the claim and the proposition.
\end{proof}

\medskip
\noindent{\sc Plan of the proof of
Proposition~\ref{prop-toric-main}:} We assume now that $\bar{H}$
is a fixed strictly convex function on $\R$. Our calculations will
feature $E$ as a large parameter. For quantities $\alpha, \beta$
depending on $E$ we will write $\alpha\preceq \beta$ if
$\alpha\leq \beta +{\textit const}$ holds for large enough $E$,
where ${\textit const}$ depends only on $(M,\omega)$, $\Phi$ and
$\bar{H}$, and in particular  does not depend on $E$. We will
write $\alpha\approx \beta$ if $\alpha\preceq \beta$ and
$\beta\preceq \alpha$. Using this language the proposition can be
restated as
\begin{equation}
\label{eqn-spectral-number-growth} c(a,EH) \approx E \bar{H}
({p}_{spec}).
\end{equation}

In general, 1-periodic orbits of the flow of $EH$ are not isolated
and therefore the Hamiltonian is not regular. Let $F$ be a regular
(time-periodic) perturbation of $EH$.

By the spectrality axiom,  the spectral number $c(a, F)$ for $a\in
QH_{2n} (M)$ equals $\cA_F ([\gamma_0, u_0])$ for some pair
$[\gamma_0, u_0]\in \tP_F$ with $CZ_F ([\gamma_0, u_0]) = 2n$.
Thus $c(a, F) \approx D_F (\gamma_0)$. Combining this with
Proposition~\ref{prop-main-1} we get that for {\it some}
$\gamma\in\cP_{EH}$
\begin{equation}
\label{eqn-spectral-number-EH-approx-D-EH-gamma} E \bar{H}
({p}_{spec}) \preceq c (a, EH) \approx c(a,F) \approx    D_F
(\gamma_0) \approx D_{EH} (\gamma)\;.
\end{equation}
Thus it would be enough to show
that
\begin{equation}
\label{eqn-EH-preceq-D-EH} D_{EH} (\gamma) \preceq E \bar{H}
({p}_{spec}) \ \ {\rm for}\ {\it all}\ \gamma\in\cP_{EH}\;,
\end{equation}
which together with (\ref{eqn-spectral-number-EH-approx-D-EH-gamma}) would imply (\ref{eqn-spectral-number-growth}).

Inequality (\ref{eqn-EH-preceq-D-EH}) will be proved in the
following way.  Note that each $\gamma\in\cP_{EH}$ lies in
$\Phi^{-1} ({p})$ for some ${p}\in\Delta$. We will show that
\begin{equation}
\label{eqn-concav} D_{EH} (\gamma) \approx E\bar{H} ({p}) +
E\bar{H}' (p) ({p}_{spec} - {p}).
\end{equation}

Note that (\ref{eqn-concav}) implies (\ref{eqn-EH-preceq-D-EH}).
Indeed, since $\bar{H}$ is strictly convex and reaches its
minimum at ${p}_{spec}$, it follows from (\ref{eqn-concav})
that
\[
D_{EH} (\gamma) \approx E\bar{H} ({p}) + E
\bar{H}' (p) ({p}_{spec} - {p}) \leq
E\bar{H} ({p}_{spec}),
\]
which is true for any $\gamma\in \cP_{EH}$ thus yielding
(\ref{eqn-EH-preceq-D-EH}).

\medskip
\noindent {\sc Proof of (\ref{eqn-concav}):} Let the $\T^1$-action
on $M$ be given by a loop of symplectomorphisms $\{ \phi_t \}$,
$t\in \R$, $\phi_t = \phi_{t+1}$. The flow of $EH$ has the form
$h_tx = \phi_{E\bar{H}' (\Phi(x))t}x$.

We view $\gamma$ as a map $\gamma : [0,1]\to M$ satisfying $\gamma
(0) = \gamma (1)$. Denote $x := \gamma (0)$. The curve $\gamma$
lies in $\Phi^{-1} ({p})$.

Denote $N := \gamma ([0,1])$. This is the $\T^1$-orbit of $x$ and it
is either a point or a circle.

In the first case $\gamma$ is a constant trajectory concentrated
at a fixed point $N\in M$ of the action.
Using this constant curve $\gamma$ together with the
constant disc $u$ spanning for the definitions of $I(\Phi)$ and $D_{EH} (\gamma)$ one gets
$${p}_{spec} - p = m_\Phi (\gamma, u) \cdot \kappa/2,$$
and
$$D_{EH} (\gamma) = E\bar{H} ({p}) - \kappa/2 \cdot CZ_{EH} ([\gamma, u]).$$
Thus proving (\ref{eqn-concav}) reduces in this case to proving
$$- CZ_{EH} ([\gamma, u])\approx E\bar{H}' (p)\cdot m_\Phi (\gamma, u).$$
Let us fix a symplectic basis of $T_N M$ and view each differential $d_N \phi_t$
as a symplectic matrix $A(t)$, so that $\{ A(t)\}$ is an identity-based loop in $Sp\, (2n)$. Then
$$- CZ_{EH} ([\gamma, u])\approx CZ_{matr} (\{ A (E\bar{H}'(p)t) \}),$$
while
$$E\bar{H}' (p)\cdot m_\Phi (\gamma, u) \approx E\bar{H}' (p) Maslov (\{ A(t)\}).$$
Thus we need to prove
$$CZ_{matr} (\{ A (E\bar{H}'(p)t) \}) \approx E\bar{H}' (p) Maslov (\{ A(t)\}),$$
which follows easily from the definitions of the Conley-Zehnder index and the Maslov class.

Thus from now on we will assume that $N$ is a circle. Take any
point $x \in N$. The stabilizer of $x$ under the $\T^1$-action is
a finite cyclic group of order $k \in \N$. Thus the orbit of the
$\T^1$-action turns $k$ times along $N$. Since $\gamma$ is a
non-constant closed orbit of the Hamiltonian flow generated by
$E\Phi^*\bar{H}$, it turns $r$ times along $N$ with $ r \in \Z
\setminus \{0\}$. This implies that $E\bar{H}' (p)= r/k$.  We
claim that without loss of generality we may assume that $l:= r/k
$ is an integer.

Indeed, we can always pass to $\gamma^{(k)}\in \cP_{kEH}$, so that
$(kE\bar{H})' (p) \in \Z$, and if we can prove the proposition for
$\gamma^{(k)}$, then
\[
D_{k EH} (\gamma^{(k)}) \approx k E\bar{H} ({p}) + k
E\bar{H}' (p) (p_{spec} - {p}).
\]
Applying Proposition~\ref{prop-multiple-orbit} we get
\[
k D_{EH} (\gamma) \approx k E\bar{H} ({p}) + k E
\bar{H}' (p) (p_{spec} -p) + k\cdot {\it const},
\]
and hence
\[
D_{EH} (\gamma) \approx E\bar{H} ({p}) + E
\bar{H}' (p) (p_{spec} - p),
\]
proving the claim for the original $\gamma$.

From now on we assume that  $l:=E\bar{H}' (p) \in\Z \setminus \{0\}$
and that $[\gamma, u]\in \tP_{l\Phi}$. Consider the Hamiltonian
vector field  $X:=\text{sgrad}\, \Phi$ at a point $x \in N$. Since
$N$ is a non-constant orbit we get $X\neq 0$. Then $V = T_x
(\Phi^{-1} (p))$ is the skew-orthogonal complement to $X$. Choose a
$\T^1$-invariant $\omega$-compatible almost complex structure $J$ in
a neighborhood of $N$. Together $\omega$ and $J$ define a
$\T^1$-invariant Riemannian metric $g$. Decompose the tangent bundle
$TM$ along $N$ as follows. Put $Z = \text{Span}(JX,X)$ and set $W$
to be the $g$-orthogonal complement to $X$ in $V$. Thus we have a
$\T^1$-invariant decomposition
\begin{equation}\label{eq-6}
T_xM = W\oplus Z\;, x \in N\;.
\end{equation}
Furthermore, $W$ and $Z$ carry canonical symplectic forms. Thus
$W$ and $Z$ define symplectic (and hence trivial)  subbundles of
$TM$ over $N$. They induce trivial subbundles of the bundle
$\gamma^* TM$ over $\SP^1$.

We calculate
\begin{equation}\label{eq-4}
dh_t (x) \xi = d\phi_{EH'(\Phi(x))t} (x) \xi + EH''(\Phi(x))\cdot
d\Phi(\xi)\cdot X\;.
\end{equation}
We consider two trivializations of the bundle $\gamma^* TM$ over
$\SP^1$. The first trivialization is defined by means of sections
invariant under the $\T^1$-action. The second one is chosen in
such a way that it extends to a trivialization of $u^* TM$ over
$\D^2$. Using these trivializations we can identify $dh_t (x)$,
respectively, with two identity-based paths $\{ C_t\}$, $\{
C^\prime_t\}$ of symplectic matrices. The decomposition
\eqref{eq-6} induces a split
$$ C_t = \id \oplus B_t\;.$$
We claim that $|CZ_{matr} (\{ B_t\})|$ is bounded by a constant
independent of $E$. Indeed, observe that in the basis $(X,JX)$ of
$Z$
\[ B_t= \left( \begin{array}{cc}
1 & b_{12}(t)  \\
0 & 1  \end{array} \right)\;.\] Denote by $L$ the line spanned by
$X=(1,0)$. Perturb $\{B_t\}$ to a path $\{B'_t = R_{\delta
t}B_t\}$, where $R_t$ is the rotation by angle $t$, and $\delta>0$
is small enough. Observe that $B'(t)L \cap L =\{0\}$ for $t>0$. It
follows readily from the definitions that $|CZ_{matr}(B'_t)|$ and
$|CZ_{matr}(R_{\delta t})|$ do not exceed $2$. Thus by the
quasi-morphism property of the Conley-Zehnder index (see
Proposition~\ref{prop-CZ-qmm}) we have that $|CZ_{matr} (\{
B_t\})|$ is bounded by a constant independent of $E$, which yields
the claim. Therefore
$$ CZ_{matr}\, (\{ C_t\})\approx 0\;.$$
On the other hand, by formula \eqref{eqn-CZ-trivializations}
$$
CZ_{matr}\, (\{ C'_t\})=CZ_{matr}\, (\{ C_t\})+ m_{l\Phi}
([\gamma, u])\;.$$ Thus
\begin{equation}
\label{eqn-CZ-final} CZ_{EH} ([\gamma, u]):= n - CZ_{matr}\, (\{
C'_t\}) \approx - m_{l\Phi} ([\gamma, u]).
\end{equation}
Since the periodic trajectory $\gamma$
lies inside $\Phi^{-1} ({p})$, we get
\begin{equation}
\label{eqn-action-final} \cA_{EH} ([\gamma, u]) = \int_0^1 EH
(\gamma (t)) dt - \int_{\D^2} u^\ast \omega = E\bar{H} ({p}) -
\int_{\D^2} u^\ast \omega.
\end{equation}
Using (\ref{eqn-action-final}) and (\ref{eqn-CZ-final}) the
precise equality
\[
D_{EH} ([\gamma, u]) = \cA_{EH} ([\gamma, u]) -
\frac{\kappa}{2}\cdot CZ_{EH} ([\gamma, u])
\]
can be turned into an asymptotic inequality
\begin{equation}
\label{eqn-D-asympt1} D_{EH} ([\gamma, u]) \approx E\bar{H} ({p})
- \int_{\D^2} u^\ast \omega + \frac{\kappa}{2} m_{l\Phi} ([\gamma,
u]).
\end{equation}
Since the periodic trajectory $\gamma$ lies inside $\Phi^{-1}
({p})$, we have
\begin{equation}
\label{eqn-Theta-action} \cA_{l\Phi} ([\gamma, u]) = \int_0^1
l\Phi (\gamma (t)) dt - \int_{\D^2} u^* \omega = l p  -
\int_{\D^2} u^* \omega.
\end{equation}
Adding and subtracting $lp$ from the
right-hand side of (\ref{eqn-D-asympt1}) and using
(\ref{eqn-Theta-action}) we get
\[
D_{EH} (\gamma) = D_{EH} ([\gamma, u]) \approx \bigg( E\bar{H} (p)
- lp)  \bigg) + \bigg( lp  - \int_{\D^2} u^\ast \omega +
\frac{\kappa}{2} m_{l\Phi} ([\gamma, u]) \bigg) =
\]
\[
= \bigg( E\bar{H} ({p}) - lp  \bigg) +
\bigg( \cA_{l\Phi} ([\gamma, u]) + \frac{\kappa}{2} m_{l\Phi}
([\gamma, u]) \bigg) =
\bigg( E\bar{H} ({p}) - lp  \bigg) - I (l\Phi)
=
\]
\[
= E\bar{H} ({p}) + l (- I(\Phi) - {p}) = E\bar{H} ({p}) + l
(p_{spec} - p)\;.\] Recalling that $l = EH' (p)$, we finally
obtain that
\[D_{EH} (\gamma)
= E\bar{H} ({p}) + EH' (p) (p_{spec} - p),
\]
 which is precisely the equation
(\ref{eqn-concav}) that we wanted to get. This finishes the proof
of Proposition~\ref{prop-toric-main} and
Theorem~\ref{thm-main-nondispl-toric}. \Qed

\subsection{Calabi and mixed action-Maslov}
\label{sec-pf-thm-Calabi-qmm-Ham-loops}

\noindent {\bf  Proof of Theorem~\ref{thm-Calabi-qmm-Ham-loops}.}

Assume $H: M\times [0,1]\to \R$ is a normalized Hamiltonian which
generates a loop in $\Ham  (M)$ representing a class
$\alpha\in\pi_1 (\Ham  (M))\subset \tHam (M)$. Then $H^{(l)}$ is
also normalized and generates a loop representing $\alpha^l$. Let
us compute $\mu (\alpha) = - {\hbox{\rm vol}\, (M)}\cdot
\lim_{l\to +\infty} c (a, H^{(l)} )/l$.

Arguing as in the proof of
(\ref{eqn-spectral-number-EH-approx-D-EH-gamma}) we get that there
exists a constant $C>0$ such that for each $l\in\N$ there exists
$\gamma\in \cP_{H^{(l)}}$ for which $| c (a, H^{(l)}) -
D_{H^{(l)}} (\gamma) |\leq C$. But, as it follows from the
definitions and from the fact that $I$ is a homomorphism,
$D_{H^{(l)}} (\gamma)$ does not depend on $\gamma$ and equals $-I
(\alpha^l) = -l I(\alpha)$. This immediately implies that $\mu
(\alpha) = {\hbox{\rm vol}\, (M)}\cdot I (\alpha) $. \Qed

\bigskip
\noindent {\bf Acknowledgements.} The origins of this paper lie in
our joint work with P.Biran on the paper \cite{BEP} -- we thank
him for fruitful collaboration at an early stage of this project,
as well as for his crucial help with Example~\ref{exam-Fermat} on
Lagrangian spheres in projective hypersurfaces. We also thank him
and O.Cornea for pointing out to us a mistake in the original
version of this paper and helping us with the correction (see
Section~\ref{sec-Lagr-proofs}).  We thank F.~Zapolsky for his help
with the  ``exotic" monotone Lagrangian torus in $\SP^2 \times
\SP^2$ discussed in Example~\ref{exam-exotic-torus}. We thank
C.~Woodward for pointing out to us the link between the special
point in the moment polytope of a symplectic toric manifold and
the Futaki invariant, and E.~Shelukhin for useful discussions on
this issue. We are also grateful to V.L.~Ginzburg, Y.~Karshon,
Y.~Long, D.~McDuff, M.~Pinsonnault, D.~Salamon and M.~Sodin for
useful discussions and communications. We thank K.~Fukaya, H.~Ohta
and K.~Ono, the organizers of the Conference on Symplectic
Topology in Kyoto (February 2006), M.~Harada, Y.~Karshon,
M.~Masuda and T.~Panov, the organizers of the Conference on Toric
Topology in Osaka (May 2006), O.~Cornea, V.L.~Ginzburg, E.~Kerman
and F.~Lalonde, the organizers of the Workshop on Floer theory
(Banff, 2007), and A.~Fathi, Y.-G.~Oh and C.~Viterbo, the
organizers of the AMS Summer Conference on Symplectic Topology and
Measure-Preserving Dynamical Systems (Snowbird, July 2007), for
giving us an opportunity to present a preliminary version of this
work and for the superb job they did in organizing these
conferences. Finally, we thank an anonymous referee for helpful
comments and corrections.

\bigskip

\bibliographystyle{alpha}

\bigskip

\noindent

\begin{tabular}{@{} l @{\ \ \ \ \ \ \ \ \ \ \,} l }
Michael Entov & Leonid Polterovich \\
Department of Mathematics & School of Mathematical Sciences \\
Technion & Tel Aviv University \\
Haifa 32000, Israel & Tel Aviv 69978, Israel \\
entov@math.technion.ac.il &
polterov@post.tau.ac.il\\
\end{tabular}

\end{document}